\numberwithin{equation}{section}
\newcommand{\Z}{\mathbb{Z}}
\newcommand{\Q}{\mathbb{Q}}
\newcommand{\R}{\mathbb{R}}
\newcommand{\C}{\mathbb{C}}
\newcommand{\Hq}{\mathbb{H}}
\newcommand{\Pj}{\mathbf{P}}
\newcommand{\Sph}{\mathbf{S}}
\newcommand{\Hy}{\mathbf{H}}
\newcommand{\I}{\mathrm{i}}
\newcommand{\U}{\mathrm{U}}
\newcommand{\SU}{\mathrm{SU}}
\newcommand{\SP}{\mathrm{Sp}}
\newcommand{\Hom}{\mathrm{Hom}}
\newcommand{\Alt}{\mathrm{Alt}}
\newcommand{\Sym}{\mathrm{Sym}}
\newcommand{\sign}{\mathrm{sign}}
\newcommand{\tw}{\mathrm{tw}}
\newcommand{\vt}{\mathrm{vert}}
\newcommand{\ol}{\overline}
\newcommand{\wt}{\widetilde}
\newcommand{\smooth}[1]{\mathcal{C}^{\infty}\!{({#1})}}
\newcommand{\field}[1]{\mathfrak{X}{\left({{#1}} \right)}}
\newcommand{\spn}[1]{\left\langle {#1} \right\rangle}
\newcommand{\id}{\mathrm{id}}
\newcommand{\Lie}[1]{\mathcal{L}_{#1}}
\newcommand{\dif}{\mathrm{d}}
\newcommand{\im}{\mathrm{Im}}
\newcommand{\rk}{\mathrm{rk}}
\renewcommand{\Re}{\operatorname{Re}}
\renewcommand{\Im}{\operatorname{Im}}
\newcommand{\norm}[1]{\left\lVert{#1}\right\rVert}
\theoremstyle{definition}
\newtheorem{defi}{Definition}[section]
\newtheorem{eg}[defi]{Example}
\theoremstyle{plain}
\newtheorem{theo}[defi]{Theorem}
\newtheorem{prop}[defi]{Proposition}
\newtheorem{cor}[defi]{Corollary}
\newtheorem{lemma}[defi]{Lemma}
\newtheorem*{theo*}{Theorem}
\theoremstyle{remark}
\newtheorem{rmk}[defi]{Remark}
\title{The c-map as a functor on certain\\ variations of Hodge structure}
\author{Mauro Mantegazza\footnote{Masaryk University, Kotl\'{a}\v{r}sk\'{a} 2, Bldg.\ 8, 602 00, Brno, Czech Republic.\newline E-mail: \mbox{\href{mailto:mauro.mantegazza.uni@gmail.com}{mauro.mantegazza.uni@gmail.com}}, ORCID: 0000-0001-7938-2950}, Arpan Saha\footnote{Universit\"{a}t Hamburg, Bundesstra{\ss}e 55, 20146 Hamburg, Germany. \newline E-mail: \href{mailto:arpan.saha@uni-hamburg.de}{arpan.saha@uni-hamburg.de}, ORCID: 0000-0003-4336-4229}}
\begin{document}
	\maketitle

\begin{abstract}
	We give a new manifestly natural presentation of the supergravity c-map.
	We achieve this by giving a more explicit description of the correspondence between projective special K\"ahler manifolds and variations of Hodge structure, and by demonstrating that the twist construction of Swann, for a certain kind of twist data, reduces to a quotient by a discrete group.
	We combine these two ideas by showing that variations of Hodge structure give rise to the aforementioned kind of twist data and by then applying the twist realisation of the c-map due to Macia and Swann.
	This extends previous results regarding the lifting of general isomorphisms along the undeformed c-map, and of infinitesimal automorphisms along the deformed c-map.
	We show in fact that general isomorphisms can be naturally lifted along the deformed c-map.
\end{abstract}
	
\section{Introduction}

\subsection{Background}

Quaternionic Kähler manifolds are notable as their holonomy group $\SP(n)\SP(1)$ is the only entry in Berger's classification of Riemannian holonomy groups that corresponds to Einstein but not Ricci-flat metrics. 
The construction of (complete) Einstein metrics has been a question of long-standing interest among geometers. 
The rigidity offered by the special holonomy of the quaternionic Kähler manifolds allows one to import tools from various other areas of mathematics, such as representation theory and algebraic geometry, to construct interesting examples of Einstein manifolds. 

This paper is concerned with one such way of constructing families of complete inhomogeneous quaternionic Kähler metrics with negative scalar curvature that has its origin in the literature on supergravity and string theory. The negativity of the scalar curvature is actually crucial, as there is considerable evidence for a conjecture due to LeBrun and Salamon \cite{LSConj} that there are no non-symmetric examples of complete quaternionic Kähler manifolds with positive scalar curvature. For example, this was shown to be the case in dimension $4$ in \cite{Hit81, FK82}, in dimension $8$ in \cite{PS91}, and in dimensions $12$ and $16$ in \cite{BWW}. Moreover, it was shown in \cite{DS99} that there are no cohomogeneity $1$ examples in any dimension. In fact, LeBrun and Salamon themselves showed in \cite{LSConj} that there are, up to rescaling, only finitely many examples of complete quaternionic Kähler manifolds with positive scalar curvature in each dimension.

In physics, quaternionic Kähler manifolds with negative scalar curvature can be thought of as manifolds parametrising the scalar fields in a $3$-dimensional supergravity theory defined on a ``spacetime'' manifold $X$ of dimension $3$. One way to produce such a theory is by \emph{dimensional reduction}. Briefly, one starts with an appropriate $4$-dimensional supergravity theory on $X \times \Sph^1$ and then takes a limit in which the radius of the circle $\Sph^1$ goes to zero. The manifold parametrising vector multiplet scalar fields in the $4$-dimensional supergravity theory has a natural \emph{projective special Kähler structure}, a notion that will be recalled later on. Thus, the result of this physical procedure is a map from the set of projective special Kähler manifolds to the set of quaternionic Kähler manifolds. This is the \emph{undeformed c-map}.

In the case of supergravity theories arising as low-energy descriptions of string theory, perturbative quantum corrections arising from the genus expansion of string theory give rise to a deformation of the above quaternionic Kähler metric parametrised by a discrete integer parameter $k$. When $k\neq 0$, the resulting map from the set of projective special Kähler manifolds to the set of families of quaternionic Kähler manifolds $N_{2k}$ is the \emph{deformed c-map}. These two cases will be collectively referred to as the \emph{supergravity c-map}. The adjective ``supergravity'' is to distinguish it from a similar construction in the absence of gravity called the \emph{rigid} c-map, that we shall also be encountering later.

The supergravity c-map was described in geometric terms in \cite{CiSC} and \cite{CortesCompProj} as a bundle construction over the projective special Kähler manifold.
Note that, in these works, the deformation parameter was continuous, as the authors work only up to local isometry.
In fact, up to local isometry, there are only three cases, characterised by the sign of the parameter.
When the given projective special Kähler metric is complete, it was proven in \cite{CortesCompProj}, that the resulting quaternionic Kähler metric is also complete for $k\ge 0$. This is in fact false for $k<0$ (see \cite[Proposition 4 (p.\ 287)]{CortesHKQK}), and so we henceforth exclude this case from our discussion of the supergravity c-map.

Projective special Kähler manifolds and (families of) quaternionic Kähler manifolds have natural notions of isomorphism, giving rise to groupoids (i.e.\ categories with only invertible arrows) of projective special K\"ahler and (families of) quaternionic K\"ahler manifolds.
So, one may ask whether the map of sets underlying the supergravity c-map actually descends to a map of isomorphism classes.
It was shown in \cite{CortesCompProj} that this is indeed the case.
However, the construction was still formulated in terms of non-canonical data, so it is not clear from this approach whether the supergravity c-map is a functor between the relevant groupoids.
In other words, what we are asking is whether isomorphisms of projective special Kähler manifolds \emph{canonically} lift to isomorphisms (i.e.\ isometries \cite[(p.\ 529)]{FlowsQKVSR}) of quaternionic Kähler manifolds.

Partial results had been previously established.
It had been shown by independent methods in \cite{CiSC}, \cite{MaciaSwann2015} and \cite{MaciaSwann2019} that in the case of the undeformed c-map, the quaternionic Kähler manifold forms a trivial bundle over the projective special Kähler manifold.
In particular, the lifting of automorphisms is well-defined \cite[Appendix]{CortesCohomogeneityOne}.
In the case of the deformed c-map, it was shown in \cite{CST} that there is a well-defined lifting of infinitesimal automorphisms.
The question of whether general automorphisms and isomorphisms naturally lift in the deformed case hitherto remained open.

\subsection{Main results}

In this paper, we fill in the gap and prove that isomorphisms of projective special Kähler manifolds canonically lift to global isometries of the quaternionic Kähler manifolds under the supergravity c-map, for non-negative integer deformation parameters.

There are two main ingredients in our approach. The first is a reformulation of the notion of projective special Kähler structure as a particular case of (abstract) variations of Hodge structure of weight $3$.
We will define these in greater detail later, but for now, we will just mention that they are an abstraction of the bundle $E\rightarrow M$, whose base $M$ is the moduli space of complex structure deformations of a compact simply connected Calabi--Yau $3$-fold $K_{m\in M}$, and whose fibres $E_m$ are the cohomology groups $H^3_{\mathrm{dR}}(K_m)$.
Accordingly, we have a Hodge decomposition 
\begin{equation}
	E_{\C} = E^{3,0} \oplus E^{2,1} \oplus E^{1,2} \oplus E^{0,3}.
\end{equation} 
In the case we are interested in, $E^{3,0}$ will be a holomorphic line bundle $L$ with hermitian structure $h_L$, and $E^{2,1}$ the bundle $L\otimes T^{1,0}M$.
The other two summands $E^{1,2}$ and $E^{0,3}$ are defined to be the conjugates of $E^{2,1}$ and $E^{3,0}$ respectively.
Moreover, $E^{3,0}$ has a canonical $\U(1)$-action that preserves $h_L$.

The correspondence between projective special Kähler manifolds and abstract variations of Hodge structure was first noted and proved by Cortés in \cite{CortesHK1998}.
A more intrinsic formulation was then given by Freed in \cite{Freed1999}.
We however give slightly different statements and proofs in Propositions \ref{pr:VPHS1} and \ref{pr:VPHS2} that describe the correspondence more explicitly.
This correspondence was also addressed in \cite{HHP} via a more local approach.
Our approach differs from theirs in that we give a global description of this structure, and in terms of a Hodge decomposition rather than a filtration.
The result is that we get a much clearer description of the relation, including an explicit expression for the Gau\ss--Manin connection.

The other ingredient is due to Macia and Swann \cite{MaciaSwann2015,MaciaSwann2019}, building on a prior work by Haydys \cite{Haydys}.
In their work, the c-map is interpreted as a special case of a more general construction called the \emph{twist}.
Introduced by Swann in \cite{Swann2010}, the twist construction takes as its input a manifold $S$ with certain \emph{twist data}, that includes a $\U(1)$-action, and produces as output another manifold $S'$ with a $\U(1)$-action.
In Lemma \ref{lemma:twist}, we establish that for certain choices of twist data, the twist construction is equivalent to taking a quotient by a finite subgroup $\Z_{k}$ of the $\U(1)$ acting on $S$.
This observation is new.

The description of projective special Kähler structures in terms of variations of Hodge structure turns out to be particularly well-suited for applying the twist construction.
Putting these ingredients together, we obtain the following result.

\begin{theo*}
	Let $E$ be the variation of Hodge structure associated to a projective special Kähler manifold $M$ and let $\wt M_{>2k}$ denote the subset of the line bundle $L\colonequals E^{3,0}$ of points $u$ whose fibrewise $h_L$-norm satisfies $h_L(u,u)>2k$. Then, the quaternionic manifold $N_{2k}$ given by the deformed c-map is the pullback of the real part of $E$ to the quotient $\wt M_{>2k}/\Z_k$, where $\Z_k$ is a subgroup of the $\U(1)$ whose action preserves $h_L$.
\end{theo*}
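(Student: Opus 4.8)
The plan is to take the twist realisation of the deformed c-map due to Macia and Swann \cite{MaciaSwann2015,MaciaSwann2019}, translate each of its ingredients into the language of the variation of Hodge structure $E$ by means of Propositions \ref{pr:VPHS1} and \ref{pr:VPHS2}, and then apply Lemma \ref{lemma:twist} to replace the twist by a quotient by $\Z_{k}$.

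Concretely, I would begin from the Macia--Swann picture: one passes from $M$ to its conical affine special K\"ahler manifold $M^{c}$, applies the \emph{rigid} c-map to obtain a pseudo-hyperk\"ahler manifold $S$ fibred over $M^{c}$, and then performs Swann's twist on a suitable open subset of $S$ with twist data depending on $k$; the result, with the twisted metric, is $N_{2k}$. The first task is to make $M^{c}$ and $S$ explicit in terms of $E$. Propositions \ref{pr:VPHS1} and \ref{pr:VPHS2} let me identify $M^{c}$ with the complement $L^{\times}$ of the zero section of $L = E^{3,0}$, equipped with the conical special K\"ahler structure read off from the Hodge data --- in particular the $\U(1)$-action on $M^{c}$ becomes the canonical $\U(1)$-action on $E^{3,0}$, and the special connection on $M^{c}$ becomes the Gau\ss--Manin connection of Proposition \ref{pr:VPHS2}. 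Since $S \cong T^{*}M^{c}$ and this special connection is flat, $S$ gets identified, using the polarisation of $E$, with the pullback $\pi^{*}E$ of the real bundle $E$ along $\pi\colon L^{\times}\rightarrow M$, carrying its canonical pseudo-hyperk\"ahler structure together with the rotating $\U(1)$-action induced from the $\U(1)$-action on $E^{3,0}$.

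Next, I would compute the moment map of this rotating $\U(1)$-action with respect to the K\"ahler form on $S$ relevant to the correspondence, and check that it coincides, up to an additive constant, with the fibrewise norm function $u \mapsto \tfrac{1}{2}h_L(u,u)$ on $L^{\times}$ pulled back to $S$. It then follows that the region on which the twist data of the deformed c-map is admissible --- i.e.\ on which the twisted metric is defined and of the correct signature --- is precisely the restriction of $\pi^{*}E$ to $\wt M_{>2k}$. What remains is to check that the Macia--Swann twist data satisfy the hypotheses of Lemma \ref{lemma:twist}: that, after the above identifications, the $\U(1)$-action, the closed two-form and the moment map appearing in the twist data have exactly the shape treated there, the relevant integrality being controlled by $k\in\Z$. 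Granting this, Lemma \ref{lemma:twist} identifies the twist of $S|_{\wt M_{>2k}}$ with the quotient $\bigl(\pi^{*}E|_{\wt M_{>2k}}\bigr)/\Z_{k}$. Since $\Z_{k}$ acts fibrewise on $L$, hence covers the identity on $M$, this quotient is canonically the pullback of $E$ to $\wt M_{>2k}/\Z_{k}$, and tracing the metric through the twist shows that the quaternionic K\"ahler metric it carries is the one produced by the deformed c-map, which is the assertion.

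I expect the core of the argument, and its main difficulty, to be this middle step: matching the Macia--Swann twist data for the deformed c-map --- originally phrased in terms of the special K\"ahler cone and a Hesse potential --- with the normalised data required by Lemma \ref{lemma:twist}, and in particular pinning down that the moment map of the rotating circle equals $\tfrac{1}{2}h_L(u,u)$ on the nose, so that the cut-off is $2k$ and not another multiple, along with all the attendant sign and scaling conventions for the Hodge metric. A further point that needs care is to confirm that the rotating $\U(1)$-action preserves the Hodge decomposition and $h_L$, so that $\bigl(\pi^{*}E|_{\wt M_{>2k}}\bigr)/\Z_{k}$ can be identified with the pullback of $E$ \emph{as a variation of Hodge structure} and not merely as a real vector bundle.
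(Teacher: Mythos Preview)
Your proposal is correct and follows essentially the same approach as the paper: identify $T\wt M_{>2k}\cong E_\pi|_{\wt M_{>2k}}$ via Proposition~\ref{pr:VPHS1}, verify that the Macia--Swann twist data $(Z,\omega_{\mathrm H},f_{\mathrm H})$ has the special form $\omega_{\mathrm H}=\dif((f_{\mathrm H}+k)\varphi+\pi_S^*\beta)$ required by Lemma~\ref{lemma:twist} (with $\beta=-\tfrac{1}{2}Q(\Phi_E,\nabla\Phi_E)$ shown to be basic), and then apply that lemma to replace the twist by a $\Z_k$-quotient. The paper makes one organisational point explicit that you leave implicit, namely the pullback-cube argument exhibiting $T\wt M_{>2k}\to E\times\R_{>2k}$ as a principal $\U(1)$-bundle pulled back from $S\to M$, which is precisely the setup Lemma~\ref{lemma:twist} demands and which yields the final identification of $N_{2k}$ as a fibred product over $M$.
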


In the above, we have omitted the explicit expression for the quaternionic Kähler metric as it requires a large number of prerequisites. The complete statement is given and proved in Theorem \ref{th:sugra-c} later on.

Presented in this form, the deformed c-map construction is manifestly functorial, and elucidates the isometric action on the resulting quaternionic K\"ahler manifold by a semidirect product of the group of automorphisms of the projective special K\"ahler manifold with a Heisenberg group, cf.\ Remark \ref{rmk:semidirectHeis}.
As it turns out, the functorial lift of automorphisms of projective special Kähler manifolds reproduces the lift of infinitesimal automorphisms constructed in a very different way in \cite{CST}.
We prove this in Proposition \ref{prop:vorsicht-funktor}.
Furthermore, in Remark \ref{rmk:k_integer}, we provide evidence that our method for canonically lifting isomorphisms through the c-map cannot be extended to non-integer $k$.

Aside from the manifest functoriality, what is also notable about the above description of the supergravity c-map is that it suggests certain natural generalisations applicable to a much wider class of variations of Hodge structure.
The resulting manifold is not quaternionic Kähler in general, as its real dimension may not even be divisible by $4$.
Interpreting these resulting manifolds is a question we leave for future investigation.

\subsection{Outline}
Since our major motivation is to prove the naturality of certain constructions, we need to set up some notation.
Thus, we start with a detailed preliminary discussion in \S \ref{sec:prelim}.
Here, we introduce our notation and recollect certain elementary constructions in differential geometry that we make use of later.

Following this, in \S \ref{sec:SK}, we set up the basic definitions of projective special Kähler geometry, establish useful formulas regarding them, introduce variations of Hodge structure, and realise projective special Kähler structures as certain instances of them. 

Next, in \S \ref{sec:twist}, we introduce Swann's twist construction and prove a useful lemma regarding it.

In \S \ref{sec:c-map}, we finally put all of this together to give a new description of the supergravity c-map and discuss some of its properties, such as functoriality and the existence of a Heisenberg action on the quaternionic K\"ahler manifold.

\subsection*{Acknowledgments}

This work was carried out at Mathematisches Forschungsinstitut Oberwolfach (MFO) as part of an Oberwolfach Leibniz Fellowship (``From prepotential to deviance'').
We are immensely grateful to the MFO for the hospitality and for providing us with such an excellent environment for research.
In the later stages of this paper, A.S.\ was supported by the DFG Emmy Noether grant AL 1407/2\nobreakdash-1, and M.M.\ by the GACR grant GA19-06357S.
The collaboration that lead to this paper was initiated in Hamburg where M.M.\ was supported by German Science Foundation (DFG) under Germany’s Excellence Strategy (EXC 2121 ``Quantum Universe''-390833306).

We also thank Vicente Cortés, Diego Conti, and Danu Thung for their comments and suggestions regarding an earlier draft of this paper.
Finally, we thank the anonymous reviewer for carefully reading through this work and for several helpful suggestions, which have improved the presentation of this article.
\subsection*{Data availability}
Data sharing is not applicable to this article as no new data were created or analysed in this study.
\section{Preliminaries, notation, and conventions}\label{sec:prelim}
We begin by first recounting some elementary notions from differential geometry and fixing our notation.

\subsection{Complex vector spaces}
Let $W$ be a real vector space.
We denote the complexification $W \otimes_\R \C$ as $W_\C$.
If $W$ is endowed with a \emph{complex structure}, i.e.\ $I\in \Hom_{\R}(W, W)$ such that $I^2=-\id_{W}$, then we have the decomposition $W_{\C}=W^{1,0}\oplus W^{0,1}$ into eigenspaces corresponding to the eigenvalues $\I$ and $-\I$ respectively.
Similarly, we have a decomposition $W^*_{\C}=W^*_{1,0}\oplus W^*_{0,1}$ given by the transpose of $I$.
We define the space $W^*_{p,q}$ to be the subspace of $(W^*_\C)^{\otimes (p+q)}$ built out of $p$ copies of $W^*_{1,0}$ and $q$ copies of $W^*_{0,1}$.
An element of $W^*_{p,q}$ will be said to be of type $(p,q)$.

Now let $V$ be a complex vector space. Following \cite{RedBook}, we denote by $V_\R$ its underlying real vector space and by $[V]$ its real part with respect to an antilinear involution $\kappa$, also known as \emph{real structure}, i.e.\
\begin{equation}
[V]\colonequals \{v\in V\mid\kappa(v)=v\}.
\end{equation}
This is a real vector space whose real dimension is equal to the complex dimension of $V$.

Notice that for any complex vector space $V$, not necessarily one equipped with a real structure, we may define another complex vector space $\overline{V}$, which has the same underlying real vector space $V_\R$, but on which the complex scalar multiplication $\C \times V_\R \rightarrow V_\R$ is given by
\begin{equation}
	(\lambda, v_\R) \longmapsto (\overline \lambda v)_\R,
\end{equation}
where $v_\R \in V_\R$ denotes the vector $v \in V$ regarded as a vector in $V_\R$.
 
The identity map on $V_\R$ then becomes an antilinear map $V \rightarrow \overline V$, which in turn gives a canonical real structure on $V \oplus \overline V$.
In terms of this real structure, we may then define
\begin{equation}
\llbracket V\rrbracket \colonequals [V\oplus \overline{V}].
\end{equation}

Putting all of these together, we have the canonical isomorphisms
\begin{equation}
[V]_{\C}\cong V,\qquad
\llbracket V\rrbracket _{\C}\cong V\oplus \overline{V},
 \qquad \llbracket V\rrbracket \cong V_\R.
\end{equation}
The first two are isomorphisms of complex vector spaces, whereas the last one is an isomorphism of real vector spaces.

The same notation will be used accordingly for vector bundles.

\subsection{Bundles}
The data of a fibre bundle consists of a total space $A$, a base space $B$, and a submersion $A\rightarrow B$.
When there is no ambiguity, or unless otherwise specified, we adopt the notation $p^A_B$ for the submersion that is part of the fibre bundle data.
We denote by $A_f$ the fibred product $A \times_B B'$, when interpreted as the pullback bundle of $A\to B$ along a map $f\colon B'\rightarrow B$, i.e.\
\begin{equation}
	\begin{tikzcd}
	A_f\arrow[r]\arrow[d]& A\arrow[d] \\B'\arrow[r, "f" ]& B
	\end{tikzcd}
\end{equation}
Although it is customary to use $f^*$ to denote both the pullback, as well as the codifferential, i.e.\ the transpose of the differential $f_*\colon TB' \rightarrow (TB)_f$, we shall reserve $f^*$ only for the latter in order to avoid confusion between the two.

A further motivation for the notation $A_f$ for the pullback of $A$ along $f$ is the following observation.
By interpreting points $u \in B$ as maps $u\colon 1\to B$ from the one-point space $1$, we can regard the fibre of $A$ over a point $u$ as the pullback of $A$ along the map $u$. In this case, the pullback notation $A_u$ agrees with the standard notation for the fibre of $A$ over $u$.

In case the map $f$ is a generic bundle submersion, we will simplify the notation by denoting the pullback of $A\to B$ along $p^{C}_{B}$ by $A_C$, when $B$ is clear from the context.

If $s\colon B \rightarrow A$ is a section of the fibre bundle $A \rightarrow B$, then there is a unique section $s_f\colon B' \rightarrow A_f$ of the fibre bundle $A_f \rightarrow B'$ such that the following diagram commutes.
\begin{equation}
\begin{tikzcd}
A_f\arrow[r]& A\\B'\arrow[r, "f" ]\arrow[u, "s_f"]& B\arrow[u, "s"']
\end{tikzcd}
\end{equation}
The section $s_f$ is called the \emph{pullback section} of $s$ (along $f$).
Again, for a point $u\colon 1\to B$, the pullback notation $s_u\colon 1\to A_u$ is coherent with the standard notation for the value of $s$ at $u$.

For the pullback of a section $s$ of $A\to B$ along $p^C_B$, we will simplify the notation by writing $s_C$, as we did for bundles.
We shall often denote the pullback section $s_f$ associated to $s$ as simply $s$ when the context makes it clear what $f$ is. So, in particular, if $\alpha$ is a section of $T^*B$, then $f^*\alpha$ is to be regarded as $f^*\colon (T^*B)_f \rightarrow T^*B'$ applied to $\alpha_f$. In fact, we shall often omit the $f^*$ in $f^*\alpha$ altogether when there is no possibility of confusion.

The pullback $A_A \rightarrow A$ of a bundle over itself will be referred to as the \emph{tautological pullback bundle} associated to $A$. This has a distinguished section called the \emph{tautological section} that we shall be denoting $\Phi_A$. This corresponds to the map $A\to A_A$ obtained by the pullback universal property on the square
\begin{equation}
	\begin{tikzcd}
	A\arrow[r, "\id_A"]\arrow[d,"\id_A"']& A\arrow[d, "p^A_B"] \\A\arrow[r, "p^A_B" ]& B
	\end{tikzcd}
\end{equation}
Equivalently, it can be defined by the property that for any section $s\colon B \rightarrow A$, the pullback section $(\Phi_A)_s$ of $(A_A)_s =A_{p^A_B\circ s}=A$ is $s$ itself. 

The tautological section has another significant property. It is a general fact that if $A\rightarrow B$ and $A'\rightarrow B$ are two bundles over the same base, and $f_A\colon A\rightarrow A$ and $f_{A'}\colon A'\rightarrow A'$ are weak bundle maps covering the same base map $f_B\colon B\rightarrow B$, then there is a canonical map $f\colon A\times_B A'\rightarrow A\times_B A'$ which covers both $f_A$ and $f_{A'}$.
In other words, we can complete a commuting cube
\begin{equation}
\begin{tikzcd}[row sep=tiny, column sep=tiny]
& A\arrow[rrr,"f_A", pos=.4]\arrow[dd] & & & A\arrow[dd]\\
A\times_B A'\arrow[rrr, near end,"f", crossing over]\arrow[dd]\arrow[ru] & & & A\times_B A'\arrow[ru]\\
& B\arrow[rrr,"f_B", pos=.4] & & & B\\
A'\arrow[rrr,"f_{A'}"]\arrow[ru] & & & A'\arrow[ru]\arrow[<-,uu, crossing over]
\end{tikzcd}
\end{equation}
When $A'$ is taken to be equal to $A$, then this construction tells us that any weak bundle map $f_A$ induces a weak bundle map on the tautological pullback bundle $A_A$ covering $f_A$.
In an abuse of notation, we will denote this induced map in the same way as the map $f_A$ downstairs.
The tautological section $\Phi_A$ commutes with such induced maps.

In the case where $A\rightarrow B$ is a vector bundle, the tautological pullback bundle $A_A \rightarrow A$ is part of the following short exact sequence of vector bundles over $A$.
\begin{equation}\label{diag:exact}
\begin{tikzcd}
	0 \arrow[r] &A_A \arrow[r,"\vt"]& TA \arrow[r,"\left(p^A_B\right)_*\ "]& (TB)_A \arrow[r] &0
\end{tikzcd}
\end{equation}
The image of $\vt$ is the ``vertical'' subbundle $\mathcal V$ of $TA$, i.e.\ the subbundle whose sections are vector fields tangent to the fibres.

Although this sequence splits, the splitting is not canonical. In order to have a canonical splitting, we need a connection cf.\ \cite[Exercise 29 (p.\ 103)]{SpivakI}. 

\subsection{Connections}\label{ssec:Connections}

The most general notion of connection is an Ehresmann connection, which is applicable to any (smooth) fibre bundle.
This is just a choice of a right inverse to the map $(p^A_B)_*\colon TA\rightarrow (TB)_A$.
The image of such a right inverse is a ``horizontal'' subbundle $\mathcal H$ of $TA$, complementary to the vertical subbundle and isomorphic to $(TB)_A$.
In other words, the data of an Ehresmann connection is precisely the choice of a splitting $TA \cong \mathcal V \oplus \mathcal H$. 

The right inverse of $(p^A_B)_*\colon TA\rightarrow (TB)_A$ maps the pullback $X_A$ associated to a vector field $X$ on $B$, to a vector field $X^{\mathcal{H}}$ on the total space $A$.
We call $X^{\mathcal{H}}$ the \emph{horizontal lift} of $X$.

Morally, an Ehresmann connection gives us a way of parallel-transporting elements of a fibre of $A\rightarrow B$ to other fibres of the bundle.
Given a curve $\gamma\colon[0,1]\rightarrow B$, we have an induced map of bundles $\gamma'\colon A_{\gamma} \rightarrow A$.
The image of the differential $\gamma_*'\colon T(A_{\gamma})\rightarrow (TA)_{\gamma'}$ intersects $\mathcal H_{\gamma'}$ in a distribution that has rank at most $1$, and thus is integrable.
The integral curves of the distribution on $A_{\gamma}$ are said to be horizontal lifts of the curve $\gamma$.
Since each of these curves intersects every fibre of $A_{\gamma}$ at exactly one point, they provide a notion of parallel transport on the bundle $A\rightarrow B$ along $\gamma$.

Ehresmann connections reduce to the other notions of connection in the presence of additional structures (cf.\ \cite{KMS}). For instance, in the case of a $G$-invariant horizontal bundle $\mathcal H$ on a principal $G$-bundle, there is a unique $G$\nobreakdash-equivariant $\mathfrak{g}$-valued $1$-form $\varphi$ on the total space that vanishes on the subbundle $\mathcal H$ such that the following composition is the identity.
\begin{equation}
	\begin{tikzcd}
	\mathfrak{g} \arrow[r, "(\cdot)^{\circ}"] &\Gamma(\mathcal V) \arrow[r,"\varphi"]& \mathfrak{g}
	\end{tikzcd}
\end{equation}
Here $(\cdot)^\circ $ maps an element $X\in \mathfrak g$ to the corresponding fundamental vector field $X^\circ$ on $P$, defined by $X^{\circ}_u \colonequals \frac{\dif}{\dif t}(u\cdot\exp(tX))|_{t=0}$ at $u\in P$. Thus, a $G$-invariant Ehresmann connection on a principal $G$-bundle is equivalent to the choice of a principal connection $\varphi$.

Likewise, in the case of a vector bundle $A \rightarrow B$, the Ehresmann connection reduces to the data of a linear connection $\nabla\colon \Gamma(A) \rightarrow \Gamma(T^*B \otimes A)$ under certain conditions.
Let $s\in \Gamma(A)$ and $X\in\field{B}$, by the naturality of the Lie bracket, we deduce that $[X^{\mathcal{H}},\vt (s)]$ is a vertical vector field on $A$, and thus it can be identified with a section of $A_A$.
If, for every $s\in \Gamma(A)$ and $X\in\field{B}$, this is the pullback of a (unique) section of $A\to B$, which we suggestively denote $\nabla_X s$, then it can easily be checked that $(X,s)\mapsto \nabla_X s$ satisfies the defining properties of a linear connection.

Conversely, given a linear connection $\nabla$ on $A$, we have a pullback connection $\nabla_{p^A_B}$ on the tautological pullback bundle $A_A$ defined by its action on pullback sections $s_A$ associated to sections $s$ of $A$, namely
\begin{equation}
	\left(\nabla_{p^A_B}\right)_X s_A= (\nabla s)_{p^A_B}\left((p^A_B)_*X\right).
\end{equation}
For clarity, we shall omit $p^A_B$ in $\nabla_{p^A_B}$ and write the pullback connection associated to a connection $\nabla$ as simply $\nabla$.

With the help of the pullback connection on $A_A$ and the tautological section $\Phi_A$ of $A_A$, we obtain a section $\nabla\Phi_A \in \Gamma(T^*A \otimes A_A)$.
This can be interpreted as a map $TA \rightarrow A_A$ that is left-inverse to $\vt$ in \eqref{diag:exact} and so induces a splitting $TA \cong \mathcal V \oplus \mathcal H$.
This establishes the equivalence.

Since principal connections $\varphi$ and linear connections $\nabla$ are both special cases of Ehresmann connections, they provide a notion of horizontal lifts, which we shall denote $X^\varphi$ and $X^\nabla$ respectively.

Notice that if $\varphi, \varphi'$ are two principal connections, and $\nabla, \nabla'$ are two linear connections such that $\varphi - \varphi' = (p^A_B)^*\theta$ and $\nabla - \nabla' = C$, then the difference of the horizontal lifts is given by
\begin{align}
	X^\varphi - X^{\varphi'} = -\theta(X)^{\circ},
	&\hfill&
	X^\nabla - X^{\nabla'} = -\vt(C_X(\Phi_A)).
\end{align}

A linear connection on a complex vector bundle with a real structure is called \emph{real} when it maps real sections to real sections.
Finally, we will denote the Levi-Civita connection on a Riemannian manifold $(M,g)$ as $\nabla^{g}$.

\subsection{Complete lifts}

Suppose we are given a vector field $X$ on $B$. Let $\psi_t$ be its flow. Then, we may define a $1$-parameter subgroup of diffeomorphisms on the tangent bundle $TB$ by $(u,X_u)\mapsto (\psi_t(u), (\psi_t)_*X_u)$. This is the flow of a vector field $X^T$ on $TB$, which is a lift of $X$. The vector field $X^T$ is said to be the \emph{complete} lift of $X$ cf.\ \cite{YI}. We would like to relate it to the horizontal lift with respect to a connection.

To do this, we first make some general remarks about vector fields on the total space of a vector bundle $A\rightarrow B$. These can be regarded as derivations on the ring of smooth functions on $A$.
By Taylor's theorem, a derivation on $\smooth{A}$ is completely specified by its action on the ring of functions on $A$, which are polynomial along the fibres.
Such functions may be identified with sections of the symmetrised dual tensor bundle $\mathrm{Sym}_\bullet A^*\rightarrow B$, where
\begin{equation}
	\mathrm{Sym}_\bullet A^* \colonequals \bigoplus_{j=0}^\infty \mathrm{Sym}(A^{*\otimes j}).
\end{equation}
A section $\Xi=\sum_{j=0}^\infty \Xi_j$ of $\mathrm{Sym}_\bullet A^*$ corresponds to the function on the total space $A$ obtained by
\begin{equation}
\sum_{j=0}^\infty (\Xi_j)_A(\Phi_A,\dots,\Phi_A).
\end{equation}
A vector field on $A$ can thus be described in terms of its action as a derivation on sections of $\mathrm{Sym}_\bullet A^*$.

Given a section $s$ of $A_A$, the associated vertical vector field $\vt(s)$ corresponds to the derivation $\Xi_j \mapsto j\iota_s\Xi_j=j\Xi_j(s,\dots)$ for $\Xi_j\in\Gamma(\Sym(A^{*\otimes j}))$.
In particular, $\vt(\Phi_A)\in\field{A}$ corresponds to the derivation acting on $\mathrm{Sym}(A^{*\otimes j})$ as the scalar multiplication by $j$. Note that this is the derivation generated by the identity map on fibrewise linear functions $\alpha$, i.e.\ sections of $A^*$. More generally, the derivation generated by $\alpha \mapsto \alpha \circ \phi$ for $\phi \in \Gamma(\mathrm{End}(A)_A)$ may be identified with the vertical vector field $\vt(\phi(\Phi_A))$.

Let us now consider the horizontal lift $X^\nabla$ of a vector field $X$ on $B$ with respect to a linear connection $\nabla$. Its flow corresponds to the parallel transport along the flow lines of $X$. This in turn defines on sections of $\Sym_{\bullet}(A^*)$ a derivation $\Xi \mapsto \nabla_X \Xi$.

Finally, let us go back to the case where $A$ is the tangent bundle $TB \rightarrow B$.
Here, the complete lift $X^T$ of a vector field $X$ on $B$, acts by Lie derivative on the fibres, and this corresponds to the derivation $\Xi \mapsto \Lie{X} \Xi$.

Now, we see that when $\nabla$ is torsion-free, the difference $X^T - X^\nabla$ acts on a fibrewise linear function on $TB$, identified with a $1$-form $\alpha$ on $B$, as 
\begin{equation}
	\alpha \longmapsto \Lie{X} \alpha - \nabla_X \alpha = \alpha(\nabla X).
\end{equation}
As we have seen above, the derivation generated by this corresponds to the vector field $\vt((\nabla X)(\Phi_{TB}))$, where $\nabla X$ is interpreted as an endomorphism field. Thus, we have
\begin{equation}\label{eq:complete-split}
	X^T = X^\nabla +\vt((\nabla X)(\Phi_{TB})).
\end{equation}
This equation can also be generalised to connections with torsion $T^{\nabla}$ by writing $\nabla X+\iota_X T^{\nabla}$ instead of $\nabla X$.
A similar relation was obtained for cotangent bundles in \cite[Lemma 3.12 (p.\ 111)]{CST} by working in local coordinates.

\section{Special Kähler geometry}\label{sec:SK}
In this section, we explicitly describe how to translate the data of a projective special K\"ahler manifold to that of a variation of Hodge structure of a certain type (Proposition \ref{pr:VPHS1}) and vice versa (Proposition \ref{pr:VPHS2}).
In order to accomplish this, we will need to establish a few technical lemmata.
We do this in \S \ref{ssec:Formulas_and_properties}.
\subsection{Basic definitions}

We begin by reviewing the definitions of affine special Kähler, conic special Kähler, and projective special Kähler manifolds.

\begin{defi}\label{def:ASK}
	An \emph{affine special Kähler} (ASK) manifold is the data of a pseudo-Kähler manifold $(\widetilde{M},\wt g,I,\wt \omega)$ with a flat, torsion-free, symplectic connection $\wt\nabla$ such that $(\wt\nabla_X I)Y =(\wt\nabla_Y I)X$ for all vector fields $X,Y$ on $\wt M$.
\end{defi}
\begin{rmk}
	A consequence of torsion-freeness and $(\wt\nabla_X I)Y =(\wt\nabla_Y I)X$ is that $\wt\omega((\wt \nabla - \nabla^{\wt g})_X Y, Z)$ and
	\begin{equation}\label{eq:ask1}
	\begin{split}
		\wt \omega((\wt \nabla - \nabla^{\wt g})_X (IY), Z)&=\wt\omega((\wt \nabla_X I)Y, Z) + \wt\omega(I(\wt \nabla - \nabla^{\wt g})_X Y, Z)\\
		&=\wt\omega((\wt \nabla_X I)Y, Z) - \wt\omega((\wt \nabla - \nabla^{\wt g})_X Y, IZ)
	\end{split}
	\end{equation}
	are symmetric under the exchange $X \leftrightarrow Y$.
	Meanwhile, the symplectic condition $\wt \nabla \wt \omega = 0$ implies that $\wt\omega((\wt \nabla - \nabla^{\wt g})_X Y, Z)$ is symmetric also under the exchange $Y\leftrightarrow Z$, and hence fully symmetric.
	This, in turn, implies full symmetry for $\wt \omega((\wt \nabla - \nabla^{\wt g})_X (IY), Z)$ as well, since we have
	\begin{equation}
	\begin{split}
		\wt \omega((\wt \nabla - \nabla^{\wt g})_X (IY), Z)
		&= \wt \omega((\wt \nabla - \nabla^{\wt g})_{IY} X, Z)
		= \wt \omega((\wt \nabla - \nabla^{\wt g})_{IY}Z, X)\\ &= \wt \omega((\wt \nabla - \nabla^{\wt g})_Z (IY), X).
	\end{split}
	\end{equation}
	In particular, by replacing $X$ with $IX$ in \eqref{eq:ask1} and using the symmetries just proved, we deduce
	\begin{equation}
		(\wt \nabla - \nabla^{\wt g})_X Y = -\frac{1}{2}(\wt \nabla_{IX}I)Y= -\frac{\I}{2}(\wt \nabla^{1,0}_{X}I)Y +\frac{\I}{2}(\wt \nabla^{0,1}_{X}I)Y.
	\end{equation}
	We shall henceforth denote the $(1,0)$ part of the above as $\wt \eta_X Y$.
	This defines a section $\wt \eta$ of $T^*_{1,0} M\otimes T M\otimes T^*M$.
	Note that the symmetry property means $\wt\eta$ is in fact a section of $\sharp_2 S_{3,0}\wt M$, where $S_{3,0}\wt M$ denotes the subbundle of $(T^*M)^{\otimes 3}$ of symmetric tensors of type $(3,0)$, and $\sharp_2$ indicates that the second index is raised using the metric $\wt g$.
	
	The flatness of $\wt \nabla$ implies that $\wt \eta$ satisfies $\ol\partial^{\wt g} \wt\eta=0$, where $\ol\partial^{\wt g}$ is the $(0,1)$ part of the exterior covariant derivative with respect to $\nabla^{\wt g}$, i.e.\ 
\begin{equation}
	\ol\partial^{\wt g}\colon \Omega^{1,0}\left(\wt M,T^{0,1}\wt M\otimes T^*_{1,0}\wt M\right)\longrightarrow \Omega^{1,1}\left(\wt M,T^{0,1}\wt M\otimes T^*_{1,0}\wt M\right).
\end{equation}
Since $\nabla^{\wt g}$ respects the $(1,0)$, $(0,1)$ decomposition, this is a Dolbeault operator.
We shall thus refer to $\wt \eta$ as the \emph{holomorphic difference tensor field}.
For later applications, it will be useful to consider $\wt \eta^{\flat}\colonequals \wt g(\wt \eta,\cdot)$, which hence is a holomorphic symmetric tensor of type $(3,0)$.

It is possible to give a characterisation of ASK manifolds in terms of $\wt\eta$, \cite[Proposition 1.34 (p.\ 39)]{Freed1999}.
\end{rmk}
\begin{defi}\label{def:CASK}
	A \emph{conic special Kähler} (CSK) manifold is the data of a pseudo-Kähler manifold $(\widetilde{M},\wt g,I,\wt \omega)$ with a flat, symplectic connection $\wt\nabla$ and a vector field $\xi$ such that
	\begin{enumerate}
		\item $\xi$ is nowhere vanishing;
		\item $\wt g$ is negative definite on $\spn{\xi,I\xi}_{\R}$ and positive definite on its orthogonal complement;
		\item $\wt \nabla \xi= \nabla^{\wt g} \xi=\id$;
		\item $\wt \nabla (I\xi)=\nabla^{\wt g} (I\xi)=I$.
	\end{enumerate}
\end{defi}
We adopt the convention $\wt \omega=\wt g(I\cdot,\cdot)$.

\begin{rmk}
	Conical special Kähler manifolds are automatically affine special Kähler. Torsion-freeness follows from the computation
	\begin{equation}
		\wt\nabla_X Y - \wt \nabla_Y X - [X,Y]
		=\wt\nabla_X\wt \nabla_Y\xi -\wt \nabla_Y \wt\nabla_X \xi -\wt\nabla_{[X,Y]}\xi
		= R^{\wt\nabla}(X,Y)\xi = 0.
	\end{equation}
	Meanwhile, $(\wt\nabla_XI) Y = (\wt\nabla_Y I) X$ follows from the computation
	\begin{equation}
		\begin{split}
		(\wt\nabla_XI) Y - (\wt\nabla_Y I) X &=\wt\nabla_X (IY) -I\wt\nabla_XY - \wt\nabla_Y (IX) + I\wt\nabla_YX\\
		&=\wt\nabla_X (IY) - \wt\nabla_Y (IX) - I[X,Y]\\
		&=\wt\nabla_X\wt\nabla_Y(I\xi) - \wt\nabla_Y \wt\nabla_X (I\xi) -\wt\nabla_{[X,Y]}(I\xi)\\
		& = R^{\wt\nabla}(X,Y)I\xi = 0.
		\end{split}
	\end{equation}
	Thus, Definition \ref{def:CASK} implies Definition 3 in \cite{CiSC} if we take $-\wt g$ as the metric.
	The opposite implication follows from \cite[Lemma 2.3 (p.\ 2647)]{MMPSK2019}.
\end{rmk}

\begin{rmk}
	The conic special structure may also be described in terms of the vector fields
	\begin{align}
	\zeta = \frac{1}{2}(\xi - \I I\xi),
	&\hfill&
	\ol\zeta = \frac{1}{2}(\xi + \I I\xi),
	\end{align}
	in the complexified tangent bundle.
	In particular, the defining properties 3 and 4 of CSK manifolds become
	\begin{align}
		\wt \nabla_X \zeta=\nabla_X^{\wt g} \zeta= X^{1,0},
	&\hfill&
	 \wt \nabla_X \ol\zeta=\nabla_X^{\wt g}\ol \zeta= X^{0,1}.
	\end{align}
\end{rmk}

\begin{rmk}
	The holomorphic difference tensor field $\wt \eta$ of a CSK manifold is horizontal, since
	\begin{equation}
	\begin{split}
		\wt \eta_Y \zeta &= \wt \nabla^{1,0}_Y\zeta - (\nabla^{\wt g})_Y^{1,0}\zeta = Y^{1,0}-Y^{1,0}=0, \\
		\wt \eta_Y \ol\zeta &= \wt \nabla^{1,0}_Y\ol\zeta - (\nabla^{\wt g})_Y^{1,0}\ol\zeta = 0-0=0.
	\end{split}
	\end{equation}
\end{rmk}
In this paper, we will only consider CSK manifolds for which $\xi$ and $I\xi$ generate a $\C^\times$-action. This property allows us to take a symplectic reduction with respect to the $\U(1)$-action generated by $I\xi$.

\begin{defi}\label{def:PSK}
	A \emph{projective special Kähler} (PSK) manifold is a Kähler manifold $M$ endowed with a principal $\C^\times$-bundle $\pi\colon \widetilde{M}\to M$ with $(\widetilde{M},\wt g,I,\wt \omega,\wt\nabla,\xi)$ conic special Kähler such that $\xi$ and $I\xi$ are the fundamental vector fields associated to $1,\I\in\C$ respectively and $M$ is the Kähler quotient with respect to the induced $\U(1)$-action.
	In this case, we say that $M$ has a projective special Kähler structure.
	We call $\xi$ the \emph{Euler vector field}.
\end{defi}

\begin{defi}
	A \emph{PSK isomorphism} i.e.\ an isomorphism between PSK manifolds $(\pi\colon\widetilde{M}\rightarrow M,\wt g,I,\wt\omega,\wt\nabla,\xi)$ and $(\pi'\colon\widetilde{M'}\rightarrow M',\wt g',I',\wt\omega',\wt\nabla',\xi')$ is a pair $(\wt \psi, \psi)$ consisting of (bijective) Kähler isometries $\wt\psi\colon \wt M\rightarrow \wt M'$ and $\psi\colon M\rightarrow M'$ such that the following diagram commutes.
	\begin{equation}
	\begin{tikzcd}
	\wt M\arrow[r, "\wt \psi"]\arrow[d,"\pi"']& \wt M' \arrow[d, "\pi'"] \\
	M \arrow[r, "\psi" ]& M'
	\end{tikzcd}
	\end{equation}
	Moreover, $\wt \psi$ is compatible with the connections and the Euler vector fields.
	Stated in a more explicit manner,
\begin{align}
\wt\nabla'_{\wt \psi}\circ \wt \psi_*=\wt\psi_*\circ \wt\nabla,
&\hfill&
\wt \psi_* \xi =\xi'_{\wt\psi}\ .
\end{align}
A PSK isomorphism is called a \emph{PSK automorphism} when the PSK data of the two PSK manifolds coincide.
\end{defi}

	For brevity, we will often denote a PSK manifold as simply $\pi\colon \widetilde{M}\to M$ and suppress the remaining structure. 
	 
	\begin{rmk}\label{rmk:principal_C*_connection}
		The bundle $\pi\colon \widetilde{M}\to M$ has a principal $\C^{\times}$-connection $\chi$ induced by the metric $\wt g$ via
		\begin{equation}\label{eq:principal_C*_connection}
		\chi
		=\frac{\widetilde{g}(\xi,\cdot)+\I\widetilde{g}(I\xi,\cdot)}{\widetilde{g}(\xi,\xi)}.
		\end{equation}
		Therefore, in terms of the principal connection, the conditions $\nabla^{\wt g}\xi = \id$ and $\nabla^{\wt g}(I\xi) = I$ are equivalent to the condition that the metric $\wt g$ is of the form
		\begin{equation}
			\wt g = -\wt g(\xi,\xi)(\pi^*g_M-\ol \chi \chi)
			=-\wt h(\ol \zeta, \zeta) (\pi^*g_M-\ol \chi \chi),
		\end{equation}
			where $\wt h= \wt g + \I \wt \omega$.
			
		Note that $\wt \varphi\colonequals \mathrm{Im}(\chi)$ restricts to a principal connection on the principal $\U(1)$-bundle $ S\rightarrow M$ obtained by taking the level set $g(\xi,\xi)=-1$.
		
		If we further let $r^2=-\wt g(\xi,\xi)$, then $\wt g$ is a conical metric on the cone $S \times \R_{>0}$ over $S$, that is
		\begin{equation}
			\wt g = r^2 g_M - r^2\wt \varphi^2-\dif r^2.
		\end{equation}
	\end{rmk}
	\begin{rmk}\label{rmk:Mtilde_and_L}
		From the bundle $\pi\colon\wt M\to M$, we can build the complex line bundle associated to the standard $\C^\times$-representation on $\C$, i.e.\
		\begin{equation}
			L\colonequals \widetilde{M}\times_{\C^\times}\C\longrightarrow M.
		\end{equation}
		Since $\pi$ is holomorphic, $L$ inherits a holomorphic structure.
		Moreover, $L$ also has a hermitian structure $h_L$ associated to $r^2\langle\cdot,\cdot\rangle$, where $\langle\cdot,\cdot\rangle$ is the standard hermitian form on $\C$.

		Notice that, using the map $u\in\widetilde{M}\mapsto [u,1]\in L$, we can regard $\widetilde{M}$ as $L$ with the zero section removed.
		In this interpretation, given a local section $s$ of $L$, the function $h_L(\ol s,s)$ is the pullback of $r^2$ along $s$.
		
		Finally, the Chern connection for $L$ is precisely the linear connection $\nabla^\chi$ associated to the principal connection $\chi$, i.e.\ the one defined by 
\begin{equation}
		\nabla^\chi \Phi_L=\chi\otimes\Phi_L.		
\end{equation}
	\end{rmk}

\subsection{Formulas and properties}
\label{ssec:Formulas_and_properties}

In this section, we establish certain formulas and properties of PSK manifolds that we shall be making use of later.

\begin{lemma}\label{lemma:derivative_chi}
	The principal $\C^\times$-connection on a PSK manifold $\pi\colon\widetilde{M}\to M$ defined in Remark \ref{rmk:principal_C*_connection} satisfies
	\begin{equation}
	\widetilde{\nabla}\chi=\nabla^{\widetilde{g}}\chi=-\chi^2-\pi^* h_M,
	\end{equation}
	where $h_M= g_M + \I\omega_M$.
\end{lemma}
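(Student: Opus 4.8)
The plan is to compute $\wt\nabla\chi$ directly from the defining formula \eqref{eq:principal_C*_connection}, using the CSK structure equations, and then to observe that the Levi-Civita connection gives the same answer because $\chi$ and $\pi^*h_M$ are already expressed in terms of $\wt g$-parallel-transported data. First I would write $\chi = \widetilde g(\zeta,\cdot)/\widetilde g(\xi,\xi)$ up to a conventional factor — more precisely, using the complex vector fields, $\chi = 2\,\wt h(\ol\zeta,\cdot)/\wt h(\ol\zeta,\zeta)$ or the real form displayed in the statement — so that differentiating $\chi$ reduces to differentiating $\wt h(\ol\zeta,\cdot)$ and the scalar $r^2 = -\wt g(\xi,\xi) = -\wt h(\ol\zeta,\zeta)$. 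For the numerator, since $\wt\nabla$ and $\nabla^{\wt g}$ agree on $\zeta$ and $\ol\zeta$ (the CSK conditions 3 and 4, in the form $\wt\nabla_X\ol\zeta = \nabla^{\wt g}_X\ol\zeta = X^{0,1}$ from the Remark after Definition \ref{def:CASK}), and since both connections are metric for $\wt g$ (hence for $\wt h$), we get $\nabla^{\wt g}_X\big(\wt h(\ol\zeta,\cdot)\big) = \wt h(X^{0,1},\cdot)$, i.e.\ the $(1,0)$ part of $\wt g(X,\cdot)$.

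Next I would differentiate $r^2$: from $\nabla^{\wt g}\xi = \id$ and $\nabla^{\wt g}(I\xi)=I$ one gets $\dif r^2 = -2\,\wt g(\xi,\cdot)$, equivalently $\dif\log r^2 = -(\chi + \ol\chi)$ after dividing by $r^2$ and comparing with \eqref{eq:principal_C*_connection}. Assembling the quotient rule, the derivative of $\chi$ picks up one term from the numerator proportional to $\pi^*h_M$ — this is where Remark \ref{rmk:principal_C*_connection} is essential: the identity $\wt g = r^2 g_M - r^2\wt\varphi^2 - \dif r^2$ (equivalently $\wt h = -\wt h(\ol\zeta,\zeta)(\pi^*g_M - \ol\chi\chi)$) lets me recognize the $(1,0)$ part of $\wt g(X,\cdot)$, after subtracting the pieces along $\chi$ and $\ol\chi$, as exactly $-r^2\,\pi^*h_M(X,\cdot)$ — and one term from the denominator proportional to $\chi^2$ via $\dif\log r^2 = -(\chi+\ol\chi)$ together with the fact that $\iota_\zeta\chi = 1$, $\iota_{\ol\zeta}\chi = 0$ (so the $\ol\chi$-contribution drops out when wedged appropriately). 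Collecting, $\wt\nabla\chi = -\chi^2 - \pi^*h_M$.

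The main obstacle I anticipate is bookkeeping of types and of the factors of $2$ and $\I$ hidden in the passage between $(\xi, I\xi)$ and $(\zeta,\ol\zeta)$, and in the convention $\wt\omega = \wt g(I\cdot,\cdot)$ versus $\wt h = \wt g + \I\wt\omega$; in particular one must check that the would-be $\ol\chi$-terms and the $(0,1)$-type terms genuinely cancel rather than survive, which relies on $\chi$ being of type $(1,0)$ (it annihilates $\ol\zeta$ and the horizontal $(0,1)$ directions) — this is a direct consequence of the explicit formula but needs to be verified carefully. The equality $\wt\nabla\chi = \nabla^{\wt g}\chi$ itself is not really an extra step: every ingredient entering the computation ($\zeta$, $\ol\zeta$, the metric, and hence $r^2$ and $\chi$) is differentiated identically by the two connections, so the two computations are literally the same line by line. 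Alternatively, one could note $\wt\nabla - \nabla^{\wt g}$ is the fully symmetric difference tensor built from $\wt\eta$, which by the Remark after the third Remark following Definition \ref{def:CASK} is horizontal (kills $\zeta$ and $\ol\zeta$), so it annihilates $\chi$ directly; I would mention this as the clean way to dispatch that equality.
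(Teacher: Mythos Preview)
Your computation for $\nabla^{\wt g}\chi$ via $\chi = \wt h(\ol\zeta,\cdot)/\wt h(\ol\zeta,\zeta)$ goes through essentially as you outline, modulo a sign slip: from $\dif r^2 = -2\wt g(\xi,\cdot)$ and $\chi+\ol\chi = 2\wt g(\xi,\cdot)/\wt g(\xi,\xi)$ one gets $\dif\log r^2 = +(\chi+\ol\chi)$, not $-(\chi+\ol\chi)$. With the correct sign the quotient rule produces $-(\chi+\ol\chi)\otimes\chi - \wt h/r^2$, and then $\wt h/r^2 = \pi^*h_M - \ol\chi\otimes\chi$ cancels the $\ol\chi\otimes\chi$ term, leaving $-\chi^2 - \pi^*h_M$.

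There is however a genuine error in your treatment of $\wt\nabla$. You assert that ``both connections are metric for $\wt g$ (hence for $\wt h$)'', but $\wt\nabla$ is only symplectic: $\wt\nabla\wt\omega = 0$, while $\wt\nabla\wt g \neq 0$ in general (the difference $\wt\nabla - \nabla^{\wt g}$ is precisely the nonzero tensor built from $\wt\eta$). Hence the ``literally the same line by line'' justification for $\wt\nabla\chi = \nabla^{\wt g}\chi$ fails. Your alternative at the end --- that the difference tensor $\wt\eta + \ol{\wt\eta}$ has horizontal image (by the full symmetry of $\wt\eta^\flat$ and $\wt\eta_Y\zeta = \wt\eta_Y\ol\zeta = 0$), so $\chi$ annihilates it --- is correct and should be promoted to the main argument for this equality, not left as an afterthought.

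The paper avoids this issue altogether by rewriting $\chi$ in terms of $\wt\omega$ rather than $\wt h$, namely $\chi = r^{-2}(\wt\omega(I\xi,\cdot) - \I\wt\omega(\xi,\cdot))$. Since both $\wt\nabla$ and $\nabla^{\wt g}$ preserve $\wt\omega$ and both satisfy $\nabla\xi=\id$, $\nabla(I\xi)=I$, the product-rule computation is literally identical for the two connections, and the equality $\wt\nabla\chi = \nabla^{\wt g}\chi$ is automatic. Your route via $\wt h$ is marginally more direct for $\nabla^{\wt g}$ but forces a separate argument for $\wt\nabla$; the paper's route handles both uniformly at the cost of one extra line identifying $\wt\omega(I,\cdot) - \I\wt\omega = -\wt h$ at the end.
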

\begin{proof}
	We first write $\chi$ as 
	\begin{equation}
		\chi =\frac{1}{r^2}\,(\widetilde{\omega}(I\xi,\cdot)-\I\widetilde{\omega}(\xi,\cdot)).
	\end{equation}
	Taking its covariant derivative, we get
	\begin{equation}
	\begin{split}
	\widetilde{\nabla}\chi
	&=\widetilde{\nabla}\left( \frac{\widetilde{\omega}(I\xi,\cdot)-\I\widetilde{\omega}(\xi,\cdot)}{r^2}\right)
	=-2\,\frac{\dif r}{r}\,\otimes \chi+\frac{\widetilde{\omega}(\widetilde{\nabla}(I\xi),\cdot)-\I \widetilde{\omega}(\widetilde{\nabla}\xi,\cdot)}{r^2}\\
	&=-(\chi+\ol \chi)\otimes \chi+\frac{\widetilde{\omega}(I,\cdot)-\I \widetilde{\omega}}{r^2}
	=-(\chi+\overline{\chi})\otimes \chi-\frac{\widetilde{h}}{r^2}\\
	&=-\chi^2-\frac{\widetilde{h}+r^2\ol \chi \otimes \chi}{r^2}
	=-\chi^2-\pi^* h_M.
	\end{split}
	\end{equation}
 The formula for $\nabla^{\widetilde{g}}$ is deduced similarly.
\end{proof}

	Since $\wt \nabla$ is torsion-free, the alternating part of $2\wt \nabla \chi$ is $\dif \chi$.
	Thus, as a corollary of the above computation we have $\dif\chi=-2\I\pi^*\omega_M$.
	The factor of $-2\I$ may seem strange when this is compared to the curvature of a principal $\U(1)$-connection, but the $\I$ is due to the fact that the (infinitesimal) $\U(1)$-action is given by the imaginary part of the (infinitesimal) $\C^\times$-action, while the $-2$ is due to the choice of level set in the Kähler quotient by which $\omega_M$ is defined.

\begin{lemma}\label{lemma:chi_invariance}
	The principal $\C^\times$-connection on a PSK manifold $\pi\colon\widetilde{M}\to M$ satisfies 
	\begin{equation}
	\Lie{\zeta}\chi=\Lie{\ol \zeta}\chi=\Lie{\zeta}\ol\chi=\Lie{\ol \zeta}\ol\chi=0.
	\end{equation}
\end{lemma}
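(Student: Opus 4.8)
The plan is to reduce each of the four Lie-derivative statements to the previously established formula $\widetilde\nabla\chi = -\chi^2 - \pi^*h_M$ from Lemma \ref{lemma:derivative_chi}, using the Cartan-type relation $\Lie{X}\chi = \iota_X(\dif\chi) + \dif(\iota_X\chi)$ together with the fact that $\widetilde\nabla$ is torsion-free, so that $\dif\chi$ is the alternating part of $2\widetilde\nabla\chi$. Concretely, for a vector field $X$ one has
\begin{equation}
\Lie{X}\chi = (\widetilde\nabla_X\chi) + \chi(\widetilde\nabla_{\cdot}X) = -\chi(X)\,\chi - \pi^*h_M(X,\cdot) + \chi(\widetilde\nabla_{\cdot}X).
\end{equation}
First I would treat $X=\zeta$. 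Using the CSK identities $\widetilde\nabla_Y\zeta = Y^{1,0}$ and $\chi(\zeta) = 1$ (which follows from $\chi(\xi)=1$, $\chi(I\xi)=\I$ and the definition $\zeta = \tfrac12(\xi - \I I\xi)$), the first and third terms combine as $-\chi + \chi(\cdot^{1,0}) = -\chi + \chi = 0$, since $\chi$ is of type $(1,0)$ and hence annihilates the $(0,1)$ part. It then remains to check that $\pi^*h_M(\zeta,\cdot) = 0$, which holds because $\zeta$ is $\pi$-vertical and $h_M$ is the pullback of a tensor on $M$. This gives $\Lie{\zeta}\chi = 0$.

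Next I would handle $X = \ol\zeta$: here $\chi(\ol\zeta) = 0$ (as $\chi$ has type $(1,0)$ and $\ol\zeta$ is anti-holomorphic), $\widetilde\nabla_Y\ol\zeta = Y^{0,1}$, and $\pi^*h_M(\ol\zeta,\cdot) = 0$ again by verticality, so all three terms vanish and $\Lie{\ol\zeta}\chi = 0$. The two statements about $\ol\chi$ follow by a parallel computation — or more quickly by taking complex conjugates: conjugation exchanges $\zeta \leftrightarrow \ol\zeta$ and $\chi \leftrightarrow \ol\chi$ while commuting with the Lie derivative along a real combination, and since $\Lie{\zeta}$ and $\Lie{\ol\zeta}$ are the $\C$-linear extensions of Lie derivatives along the real fields $\xi$ and $I\xi$, one has $\ol{\Lie{\zeta}\chi} = \Lie{\ol\zeta}\ol\chi$ and $\ol{\Lie{\ol\zeta}\chi} = \Lie{\zeta}\ol\chi$; the right-hand sides are then zero by what was just shown.

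The main obstacle, such as it is, is bookkeeping of the type decomposition: one must use that $\chi$ is genuinely of type $(1,0)$ (so $\chi(\ol\zeta)=0$ and $\chi \circ (\cdot)^{0,1} = 0$) and that $\ol\chi$ is of type $(0,1)$, together with the correct pairings $\chi(\zeta)=1$, $\ol\chi(\ol\zeta)=1$. Everything else is a direct substitution of the CSK structure equations for $\widetilde\nabla\zeta$ and $\widetilde\nabla\ol\zeta$ into the Cartan formula, plus the triviality of $\iota_\zeta\pi^*h_M$ and $\iota_{\ol\zeta}\pi^*h_M$ coming from $\pi$-verticality of $\zeta,\ol\zeta$. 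No subtle global or curvature input beyond Lemma \ref{lemma:derivative_chi} is needed.
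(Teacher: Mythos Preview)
Your argument is correct, but it takes a slightly different route from the paper's. The paper applies Cartan's formula $\Lie{X}\chi = \dif(\iota_X\chi) + \iota_X\dif\chi$ directly: since $\dif\chi = -2\I\,\pi^*\omega_M$ (the corollary of Lemma~\ref{lemma:derivative_chi}) and $\zeta,\ol\zeta$ are $\pi$-vertical, the term $\iota_X\dif\chi$ vanishes immediately, while $\iota_\zeta\chi = 1$ and $\iota_{\ol\zeta}\chi = 0$ are constants, so $\dif(\iota_X\chi)=0$ as well. This needs only the antisymmetric part of $\widetilde\nabla\chi$ and the values $\chi(\zeta),\chi(\ol\zeta)$.

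You instead use the torsion-free identity $\Lie{X}\chi = \widetilde\nabla_X\chi + \chi(\widetilde\nabla_{\,\cdot\,}X)$ together with the \emph{full} formula $\widetilde\nabla\chi = -\chi^2 - \pi^*h_M$ and the CSK equations $\widetilde\nabla_Y\zeta = Y^{1,0}$, $\widetilde\nabla_Y\ol\zeta = Y^{0,1}$, then cancel $-\chi(\zeta)\chi$ against $\chi((\cdot)^{1,0})=\chi$ using the type of $\chi$. This works perfectly and the bookkeeping is right; it simply invokes more structure (the full covariant derivative and the CSK identities) than is strictly necessary. The paper's version is more economical, while yours has the virtue of making explicit exactly how the cancellation between the connection term and the $\widetilde\nabla\zeta$ term occurs.
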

\begin{proof}
	The formulas for $\chi$ follow from the Cartan formula, i.e.\ 
	\begin{equation}
	\begin{split}
	\Lie{\zeta}\chi &= \dif(\iota_\zeta\chi) + \iota_\zeta\dif\chi = \dif(1) -2\I\iota_\zeta\pi^*\omega = 0,\\
	\Lie{\ol\zeta}\chi &= \dif(\iota_{\ol\zeta}\chi) + \iota_{\ol\zeta}\dif\chi = \dif(0) -2\I\iota_{\ol\zeta}\pi^*\omega = 0.
	\end{split}
	\end{equation}
	The formulas for $\ol\chi$ follow similarly.
\end{proof}
\begin{lemma}
	Given a PSK manifold $\pi\colon\wt M \rightarrow M$, the Lie derivatives of the connections $\nabla^{\wt g}$ and $\wt \nabla$ on $\wt M$ satisfy
	\begin{align}
		\Lie{\zeta}\nabla^{\wt g} &= \Lie{\ol\zeta}\nabla^{\wt g} =0,&
		\hfill&
		\Lie{\zeta}\wt\nabla =-\Lie{\ol\zeta}\wt\nabla =-\frac{\I}{ 2}\, \wt\nabla I.
	\end{align}
\end{lemma}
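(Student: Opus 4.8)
The plan is to use the standard identity expressing the Lie derivative of a torsion-free connection along a vector field as the sum of a curvature term and a second-covariant-derivative term, and then to exploit the defining relations of the CSK structure together with the Kähler and flatness conditions. Concretely, for any torsion-free connection $\nabla$ and any vector field $X$ (extended $\C$-linearly to $T_\C\wt M$), one has
\[
	(\Lie{X}\nabla)_Y Z = R^{\nabla}(X,Y)Z + \nabla_Y(\nabla_Z X) - \nabla_{\nabla_Y Z}X,
\]
which follows by expanding $\Lie{X}(\nabla_Y Z) - \nabla_{\Lie{X}Y}Z - \nabla_Y(\Lie{X}Z)$ and repeatedly substituting $[A,B] = \nabla_A B - \nabla_B A$. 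Since both $\nabla^{\wt g}$ and $\wt\nabla$ are torsion-free, I would apply this with $X \in \{\zeta, \ol\zeta\}$, using the identities $\nabla^{\wt g}_X\zeta = \wt\nabla_X\zeta = X^{1,0}$ and $\nabla^{\wt g}_X\ol\zeta = \wt\nabla_X\ol\zeta = X^{0,1}$ recorded in the remarks above.

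For $\nabla^{\wt g}$: because $(\wt M,\wt g,I)$ is pseudo-Kähler we have $\nabla^{\wt g}I = 0$, so $\nabla^{\wt g}_Z\zeta = Z^{1,0} = \tfrac12(\id - \I I)Z$ exhibits $\nabla^{\wt g}\zeta$ as the \emph{parallel} endomorphism field $\tfrac12(\id - \I I)$; hence the second-derivative term $\nabla^{\wt g}_Y(\nabla^{\wt g}_Z\zeta) - \nabla^{\wt g}_{\nabla^{\wt g}_Y Z}\zeta$ vanishes identically. It then remains to kill $R^{\nabla^{\wt g}}(\zeta,Y)Z$. From $\nabla^{\wt g}\xi = \id$ and torsion-freeness, $R^{\nabla^{\wt g}}(X,Y)\xi = \nabla^{\wt g}_X Y - \nabla^{\wt g}_Y X - [X,Y] = 0$, and from $\nabla^{\wt g}(I\xi) = I$ with $\nabla^{\wt g}I = 0$, likewise $R^{\nabla^{\wt g}}(X,Y)(I\xi) = 0$; so $R^{\nabla^{\wt g}}(\cdot,\cdot)\zeta = 0$. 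Applying the pairing symmetry $\wt g(R^{\nabla^{\wt g}}(A,B)C,D) = \wt g(R^{\nabla^{\wt g}}(C,D)A,B)$ of the Levi-Civita curvature then gives $\wt g(R^{\nabla^{\wt g}}(\zeta,Y)Z,W) = \wt g(R^{\nabla^{\wt g}}(Z,W)\zeta,Y) = 0$ for all $Y,Z,W$, whence $R^{\nabla^{\wt g}}(\zeta,Y)Z = 0$ by nondegeneracy of $\wt g$. Thus $\Lie{\zeta}\nabla^{\wt g} = 0$, and the identical argument for $\ol\zeta$ (or, equivalently, for $\xi$ and $I\xi$ separately) gives $\Lie{\ol\zeta}\nabla^{\wt g} = 0$.

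For $\wt\nabla$: now $R^{\wt\nabla} = 0$ by flatness, so only the second-derivative term contributes. Using $\wt\nabla_Z\zeta = Z^{1,0} = \tfrac12(\id - \I I)Z$, but with $\wt\nabla(\id) = 0$ and $\wt\nabla I \neq 0$, a one-line computation gives $\wt\nabla_Y(\wt\nabla_Z\zeta) - \wt\nabla_{\wt\nabla_Y Z}\zeta = -\tfrac{\I}{2}(\wt\nabla_Y I)Z$, i.e.\ $\Lie{\zeta}\wt\nabla = -\tfrac{\I}{2}\wt\nabla I$. The same computation with $\wt\nabla_Z\ol\zeta = Z^{0,1} = \tfrac12(\id + \I I)Z$ yields $\Lie{\ol\zeta}\wt\nabla = +\tfrac{\I}{2}\wt\nabla I$, which also follows at once from the fact that $\tfrac12(\id - \I I) + \tfrac12(\id + \I I) = \id$ is $\wt\nabla$-parallel. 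I expect the only genuinely delicate point to be the passage from $R^{\nabla^{\wt g}}(\cdot,\cdot)\zeta = 0$ to $R^{\nabla^{\wt g}}(\zeta,\cdot)\cdot = 0$, which rests on the algebraic pair symmetry of the Levi-Civita curvature and on checking that this symmetry persists for the $\C$-bilinear extension; everything else is a routine computation with the CSK identities and the Kähler condition.
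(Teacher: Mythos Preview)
Your argument is correct. For the special connection $\wt\nabla$ you proceed exactly as the paper does: both use the identity $(\Lie{X}\nabla)_Y Z = R^{\nabla}(X,Y)Z + \nabla^2_{Y,Z}X$, invoke flatness to drop the curvature term, and compute $\wt\nabla(\wt\nabla\zeta) = \tfrac12\wt\nabla(\id - \I I) = -\tfrac{\I}{2}\wt\nabla I$.

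For the Levi-Civita connection $\nabla^{\wt g}$ you take a genuinely different route. The paper does not use the K\"ahler condition $\nabla^{\wt g}I = 0$ at all; instead it observes that $\zeta$ and $\ol\zeta$ are conformal Killing with constant conformal factor ($\Lie{\zeta}\wt g = \Lie{\ol\zeta}\wt g = \wt g$), shows that for any such vector field $X$ the tensor $\wt g((\Lie{X}\nabla^{\wt g})_Y Z, W)$ is symmetric in $Y,Z$ (by torsion-freeness) and skew in $Z,W$ (from $(\Lie{X}\nabla^{\wt g})\wt g = 0$), and then runs a three-term cyclic permutation to force it to vanish. Your argument instead splits the formula into its two pieces and kills each separately: the second-derivative term dies because $\nabla^{\wt g}\zeta = \tfrac12(\id - \I I)$ is $\nabla^{\wt g}$-parallel on a K\"ahler manifold, and the curvature term dies because $R^{\nabla^{\wt g}}(\cdot,\cdot)\zeta = 0$ (from $\nabla^{\wt g}\xi = \id$, $\nabla^{\wt g}(I\xi) = I$, and torsion-freeness) together with the pair symmetry $\wt g(R(A,B)C,D) = \wt g(R(C,D)A,B)$. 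Your worry about the pair symmetry is unfounded: it is an algebraic identity for the Levi-Civita curvature of any pseudo-Riemannian metric and extends $\C$-bilinearly without issue. Your approach is arguably more direct and makes essential use of the K\"ahler structure; the paper's approach is more general in that it applies to any homothetic vector field on a pseudo-Riemannian manifold, without any assumption on $I$.
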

\begin{proof}
	First of all, we derive an expression for the Lie derivative of a torsion-free connection:
	\begin{equation}
		\begin{split}
		(\Lie{X}\nabla)_Y Z &= \Lie{X}(\nabla_Y Z)-\nabla_{\Lie{X}Y}Z - \nabla_Y(\Lie{X}Z)\\
		&=\nabla_X\nabla_YZ -\nabla_{\nabla_YZ}X - \nabla_{[X,Y]}Z - \nabla_Y\nabla_XZ + \nabla_Y\nabla_ZX\\
		&=R^\nabla(X,Y)Z + \nabla^2_{Y,Z}X.
		\end{split}
	\end{equation}
	In particular, the Lie derivative of a torsion-free connection is tensorial. 
	
	In the case of $\wt \nabla$, the curvature $R^{\wt \nabla}$ vanishes, so we have
	\begin{equation}
	\begin{split}
	\Lie{\zeta}\wt \nabla &= \wt \nabla(\wt \nabla \zeta) = \frac{1}{2}\,\wt \nabla(\id - \I I)=-\frac{\I}{2}\,\wt \nabla I,\\
	\Lie{\ol \zeta}\wt \nabla &= \wt \nabla(\wt \nabla \ol\zeta) = \frac{1}{2}\,\wt \nabla(\id + \I I)=\frac{\I}{2}\,\wt \nabla I.
	\end{split}
	\end{equation}
	In the case of $\wt \nabla$, we instead make use of the fact that $\zeta$ and $\ol\zeta$ are conformal Killing vector fields, as we can see by computing
	\begin{equation}
		\Lie{\zeta}\wt g = \Lie{\ol\zeta }\wt g = \wt g.
	\end{equation}
	We may now use the fact that the Levi-Civita connection preserves the metric to deduce that, for any vector field $X$ such that $\Lie{X}\wt g = c\wt g$ for some constant $c$, the action of $\Lie{X}\nabla^{\wt g}$ as a derivation on $\wt g$ is
	\begin{equation}
		(\Lie{X}\nabla^{\wt g})\wt g = \Lie{X}(\nabla^{\wt g}\wt g) -\nabla^{\wt g}(\Lie{X}\wt g)= \Lie{X}(0) - c\nabla^{\wt g}\wt g=0.
	\end{equation}
	More explicitly, this means that for any conformal Killing vector field $X$, the tensor field $\Lie{X}\nabla^{\wt g}$ satisfies
	\begin{equation}
	\wt g\left((\Lie{X}\nabla^{\wt g})_Y Z,W\right)+\wt g\left((\Lie{X}\nabla^{\wt g})_Y W,Z\right)=0,
	\end{equation}
	for arbitrary vector fields $Y$, $Z$, $W$.
	Meanwhile, by torsion-freeness, $(\Lie{X}\nabla^{\wt g})_Y Z$ is symmetric in $Y$ and $Z$.
	Combining these observations, we see that
	\begin{equation}
	\wt g\left((\Lie{X}\nabla^{\wt g})_Y Z,W\right)
	=-\wt g\left((\Lie{X}\nabla^{\wt g})_Y W,Z\right)
	=-\wt g\left((\Lie{X}\nabla^{\wt g})_W Y,Z\right).
	\end{equation}
	This is a circular permutation in $Y$, $Z$, and $W$, which changes the overall sign.
	Applying this permutation thrice, we obtain
	\begin{equation}
	\wt g\left((\Lie{X}\nabla^{\wt g})_Y Z,W\right)
	=-\wt g\left((\Lie{X}\nabla^{\wt g})_Y Z,W\right)
	=0.
	\end{equation}
	By non-degeneracy of $\wt g$, we conclude that $\Lie{X}\nabla^{\wt g}=0$.
\end{proof}

	As a corollary of the above result, we have that
	\begin{align}
		\Lie{\zeta}\wt \eta = -\frac{\I}{2}\,\wt \nabla^{1,0} I = \wt \eta,&
		\hfill&
		\Lie{\ol\zeta}\wt \eta = \frac{\I}{2}\,\wt \nabla^{1,0} I = -\wt \eta.
	\end{align}
	In particular, we can compute the Lie derivatives of the tensor field $\wt \eta^{\flat}$, obtaining
	\begin{equation}
	\begin{split}
		\Lie{\zeta}(\wt \eta^{\flat})
		&=\Lie{\zeta}(\wt g(\wt \eta,\cdot))
		=(\Lie{\zeta}\wt g)(\wt \eta,\cdot) + \wt g(\Lie{\zeta}\wt \eta,\cdot) 
		= 2\wt g(\wt \eta,\cdot)=2\wt \eta^{\flat},\\
		\Lie{\ol\zeta}(\wt \eta^{\flat})
		&=\Lie{\ol\zeta}(\wt g (\wt \eta,\cdot))
		= (\Lie{\ol\zeta}\wt g)(\wt \eta,\cdot) + \wt g (\Lie{\ol\zeta}\wt \eta,\cdot) = 0.
	\end{split}
	\end{equation}
	Thus $\wt \eta^{\flat}$ varies quadratically with respect to the $\C^\times$-action on $\wt M$. We have already seen that it is horizontal.
	 As a consequence, we get a (linear) bundle map $L \otimes L \rightarrow S_{3,0}M$. Raising the second index using the metric $g_M$ on $M$ then gives us a map
	\begin{equation}
		\eta \colon L \otimes L \longrightarrow \sharp_2S_{3,0}M.
	\end{equation}
	This map was constructed in \cite[Proposition 6.5 (p.\ 16)]{MMPSK2019}, where it was denoted by $\widehat{\gamma}$, and referred to as the \emph{intrinsic deviance}. Explicitly, we have
	\begin{equation}
	\wt \eta^{\flat}=\pi^*(g_M(\eta(\Phi_L,\Phi_L),\cdot)).
	\end{equation}

\begin{lemma}\label{lemma:LC_difference}
	Given a PSK manifold $\pi\colon\wt M \rightarrow M$ and vector fields $X,Y$ on $\wt M$, we have the equation
	\begin{equation}\label{eq:LC_difference}
	\pi_*(\nabla^{\widetilde{g}}_X Y)-\nabla^g_{X} \pi_* Y
	=\pi_*\big(\chi(X)Y^{1,0}+\ol \chi(X)Y^{0,1}+\chi(Y)X^{1,0}+\ol \chi(Y)X^{0,1}\big)
	\end{equation}
	holding on the pullback bundle $(TM)_{\pi}\to \wt M$.
\end{lemma}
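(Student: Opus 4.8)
The plan is to compute both sides by relating the Levi-Civita connection $\nabla^{\wt g}$ on $\wt M$ to the pulled-back Levi-Civita connection on $M$ via the submersion structure, and then to read off the difference in terms of the principal connection $\chi$. First I would recall from Remark \ref{rmk:principal_C*_connection} that $\wt g = r^2(\pi^* g_M - \ol\chi\chi)$ with $r^2 = -\wt g(\xi,\xi)$, so that the horizontal distribution of $\chi$ (equivalently of $\wt\varphi = \im\chi$) is $\wt g$-orthogonal to the vertical directions $\xi, I\xi$, and on this horizontal distribution $\wt g$ restricts to $r^2 \pi^* g_M$. For $X, Y$ vector fields on $\wt M$, I would first reduce to the case where $X$ and $Y$ are horizontal lifts of vector fields on $M$ (since both sides are tensorial in $X$ and $Y$, and $\chi(X)$, $\ol\chi(X)$ encode precisely the vertical components $X^{1,0} - X^\chi{}^{1,0}$ etc., so the vertical contributions can be added back at the end using the CSK identities $\nabla^{\wt g}\xi = \id$, $\nabla^{\wt g}(I\xi) = I$, or in complex form $\nabla^{\wt g}_X\zeta = X^{1,0}$, $\nabla^{\wt g}_X\ol\zeta = X^{0,1}$).

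Next, for horizontal $X, Y$, I would use the Koszul formula for $\nabla^{\wt g}_X Y$ together with the conformal rescaling $\wt g = r^2(\pi^* g_M - \ol\chi\chi)$. The term $\pi^* g_M$ contributes, after projecting by $\pi_*$, exactly $\nabla^g_{\pi_* X}\pi_* Y$; the conformal factor $r^2$ contributes terms involving $\dif(\log r^2) = \chi + \ol\chi$ (using $\dif r/r = \tfrac12(\chi+\ol\chi)$ from the proof of Lemma \ref{lemma:derivative_chi}), which for horizontal $X, Y$ contracts to zero against horizontal vectors but produces a vertical term; and the $\ol\chi\chi$ term contributes a multiple of $\dif\chi = -2\I\pi^*\omega_M$ (from the corollary to Lemma \ref{lemma:derivative_chi}), which is again vertical. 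Collecting these, the vertical part of $\nabla^{\wt g}_X Y$ for horizontal $X, Y$ is $-\wt g(X,Y^{1,0})\,\ol\zeta/r^2 - \wt g(X, Y^{0,1})\,\zeta/r^2$ type expressions, but these are killed by $\pi_*$, so for horizontal $X, Y$ the left-hand side of \eqref{eq:LC_difference} vanishes — consistent with the right-hand side, since $\chi$ and $\ol\chi$ annihilate horizontal vectors.

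Then I would put the vertical parts back. Writing a general $X = X^\chi + \chi(X)\zeta + \ol\chi(X)\ol\zeta$ (and similarly for $Y$), and using bilinearity of $\nabla^{\wt g}$, the only surviving contributions to $\pi_*(\nabla^{\wt g}_X Y) - \nabla^g \pi_* Y$ come from the cross terms where at least one argument is vertical. Using $\nabla^{\wt g}_X\zeta = X^{1,0}$ and $\nabla^{\wt g}_X\ol\zeta = X^{0,1}$, the term $\nabla^{\wt g}_X(\chi(Y)\zeta)$ contributes $\chi(Y)X^{1,0}$ plus $X(\chi(Y))\zeta$, and the latter is vertical hence killed by $\pi_*$; similarly for the $\ol\chi(Y)\ol\zeta$ piece. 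The terms with $X$ vertical, say $\nabla^{\wt g}_{\chi(X)\zeta}Y$, give $\chi(X)\nabla^{\wt g}_\zeta Y$; here I would use $\Lie{\zeta}\wt g = \wt g$ and torsion-freeness to write $\nabla^{\wt g}_\zeta Y = \nabla^{\wt g}_Y \zeta + [\zeta, Y] = Y^{1,0} + [\zeta,Y]$, and since $[\zeta,Y]$ projects to $0$ when $Y$ is (locally) a lift, the surviving piece is $\chi(X)Y^{1,0}$, matching the right-hand side. Doing this for all four vertical slots reproduces exactly $\chi(X)Y^{1,0} + \ol\chi(X)Y^{0,1} + \chi(Y)X^{1,0} + \ol\chi(Y)X^{0,1}$.

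The main obstacle I anticipate is bookkeeping: carefully tracking which terms are vertical (hence annihilated by $\pi_*$) versus horizontal, and correctly handling the fact that the horizontal lift of a vector field on $M$ is only defined up to vertical terms, so that brackets like $[\zeta, Y]$ and $[\ol\zeta, Y]$ must be shown to be vertical (which follows from $\Lie{\zeta}$ and $\Lie{\ol\zeta}$ preserving the horizontal distribution, a consequence of Lemma \ref{lemma:chi_invariance} since $\iota_\zeta\chi = 1$, $\iota_{\ol\zeta}\chi = 0$ are constant and $\Lie{\zeta}\chi = \Lie{\ol\zeta}\chi = 0$). Once the verticality of these correction brackets is established, the computation is routine tensorial bookkeeping, and the asserted formula drops out. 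Alternatively — and this may be cleaner to write — one can verify \eqref{eq:LC_difference} directly by checking that the right-hand side, call it $C_X Y$, is symmetric in $X, Y$ (immediate), and that $\nabla^g + \pi_*\circ(C - {}\cdot\,)$ is metric for $g_M$ and torsion-free on $M$, hence equals $\nabla^g$ by uniqueness; the metric compatibility check uses $\Lie{\zeta}\wt g = \Lie{\ol\zeta}\wt g = \wt g$ and the orthogonality of horizontal and vertical directions.
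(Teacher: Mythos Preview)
Your approach is correct and genuinely different from the paper's. The paper does not decompose $X$ and $Y$ into horizontal and vertical parts; instead it pairs both sides against $g_M(\cdot,Z')$ for an arbitrary $Z'\in\field{M}$, applies the Koszul formula for $\wt g$ and for $g_M$ simultaneously (with $Z=(Z')^\chi$), and then expands $\wt g=r^2(\pi^*g_M-\ol\chi\chi)$ directly, computing $\Lie{Z}(\ol\chi\chi)$ along the way. This is a single long tensorial computation with no case distinction. Your route---reduce by tensoriality to horizontal lifts plus $\zeta,\ol\zeta$, use $\nabla^{\wt g}\zeta=\pi^{1,0}$ and $\nabla^{\wt g}\ol\zeta=\pi^{0,1}$ for the cross terms, and observe that the horizontal--horizontal case is a Riemannian-submersion-type identity since $r^2$ is horizontally constant---is shorter and more structural. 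The trade-off is that the paper's computation never needs to argue that anything descends or vanishes under $\pi_*$, whereas you have to track this carefully.

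One small correction: in your obstacle paragraph you write that $[\zeta,Y]$ ``must be shown to be vertical''. It is not vertical; by Lemma~\ref{lemma:chi_invariance} it is horizontal, and by $\pi$-relatedness of $\zeta$ to $0$ it projects to zero, hence $[\zeta,Y_h]=0$ for a horizontal lift $Y_h$. (Equivalently, horizontal lifts are invariant under the principal $\C^\times$-action.) You used the correct fact---``projects to $0$''---in the body of your argument, so this is only a slip in the summary. Your alternative suggestion at the end, characterising $\nabla^g$ by uniqueness of the Levi-Civita connection, is not well-posed as written: the candidate is a connection on the pullback bundle $(TM)_\pi$, not on $TM$, so you would first need to show it descends before invoking uniqueness.
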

\begin{proof}
Notice that both sides of \eqref{eq:LC_difference} are $\smooth{\wt M}$-linear in $X$ and $Y$, so it is enough to verify the formula pointwise.
Consider thus the tensor
\begin{equation}
2g\big(\pi_*(\nabla^{\widetilde{g}}_X Y)-\nabla^{g}_{X} \pi_*Y, Z'\big),
\end{equation}
for a vector field $Z'$ on $M$.
Let $Z=(Z')^{\chi}$ be the horizontal lift of $Z'$, then
\begin{equation}
	\begin{split}
	&2g\big(\pi_*(\nabla^{\widetilde{g}}_X Y)-\nabla^{g}_{X} \pi_*Y, Z'\big)
	=2\pi^*g\big(\nabla^{\widetilde{g}}_X Y),Z\big)
	-2g\big(\nabla^{g}_{X} \pi_*Y,Z'\big)\\
	&\qquad=2\,(r^{-2}\wt g+\ol \chi\chi)(\nabla^{\widetilde{g}}_X Y,Z)-2 g(\nabla^{g}_{X} \pi_*Y,Z')\\
	&\qquad=\frac{2}{r^2}\,\widetilde{g}(\nabla^{\widetilde{g}}_X Y,Z)-2 g(\nabla^{g}_{\pi_* X} \pi_* Y,Z').
	\end{split}
	\end{equation}
Being tensorial, $X$ and $Y$ can be assumed to be $\pi$-related to vector fields on $M$.
So let $X',Y'\in\field{M}$ be such that $\pi_* X=X'_\pi$ and $\pi_* Y=Y'_\pi$.
We now obtain
\begin{equation}
2g\big(\pi_*(\nabla^{\widetilde{g}}_X Y)-\nabla^{g}_{X} \pi_*Y, Z'\big)
=\frac{2}{r^2}\,\widetilde{g}(\nabla^{\widetilde{g}}_X Y,Z)-2 g(\nabla^{g}_{X'} Y',Z')_{\pi}.
\end{equation}
By the Koszul formula for $\widetilde{g}$ and $g$, this expression can be written as
\begin{equation}
\begin{split}
&\frac{2}{r^2}\big(X(\widetilde{g}(Y,Z))+Y(\widetilde{g}(Z,X))-Z(\widetilde{g}(X,Y))\\
&\quad+\widetilde{g}([Z,X],Y)+\widetilde{g}([X,Y],Z)-\widetilde{g}([Y,Z],X)\big)\\
&\quad- X'(g(Y',Z'))_{\pi}- Y'(g(Z',X'))_{\pi}+ Z(g(X',Y'))_{\pi}\\
&\quad- g([Z',X'],Y')_{\pi}-g([X',Y'],Z')_{\pi}+g([Y',Z'],X')_{\pi}.
	\end{split}
	\end{equation}
The last terms can be simplified thanks to the equalities
\begin{align}
\begin{split}
X'(g(Y',Z'))_{\pi}
&=\dif(g(Y',Z'))_{\pi}(X'_{\pi})
=\pi^*\dif(g(Y',Z'))(X)\\
&=\dif(g(Y',Z')_{\pi})(X)
=\dif(\pi^*g(Y,Z))(X)
=X(\pi^*g(Y,Z)),\\
g([X',Y'],Z')_{\pi}
&=g_{\pi}(\pi_*[X,Y],\pi_*Z)
=\pi^*g([X,Y],Z),
\end{split}
\end{align}
and similarly for the other terms.
Thus, we obtain
	\begin{equation}
	\begin{split}
	&2g\big(\pi_*(\nabla^{\widetilde{g}}_X Y)-\nabla^{g}_{X} \pi_*Y, Z'\big)\\
	&\quad
	=2\,\frac{X(\wt g(Y,Z))
	+Y(\wt g(Z,X))
	-Z(\wt g(X,Y))}{r^2}
	+\bigg(\frac{\wt{g}}{r^2}-\pi^*g\bigg)([Z,X],Y)\\
	&\quad\quad
	+\bigg(\frac{\wt{g}}{r^2}-\pi^*g\bigg)([X,Y],Z)
	-\bigg(\frac{\wt{g}}{r^2}-\pi^*g\bigg)([Y,Z],X)\\
	&\quad\quad- X(\pi^*g(Y,Z))
	- Y(\pi^*g(Z,X))
	+ Z(\pi^*g(X,Y)).
	\end{split}
	\end{equation}
Using the expression $\wt g=r^2(\pi^*g-\ol \chi\chi)$, and the $\chi$-horizontality of $Z$, we deduce
	\begin{equation}
	\begin{split}
	&2g\big(\pi_*(\nabla^{\widetilde{g}}_X Y)-\nabla^{g}_{X} \pi_*Y, Z'\big)\\
	&\quad=2\,\frac{\dif r}{r}(X)(\pi^*g-\ol \chi\chi)(Y,Z)
	+2\,\frac{\dif r}{r}(Y)(\pi^*g-\ol \chi\chi)(Z,X)\\
	&\quad\quad-2\,\frac{\dif r}{r}(Z)(\pi^*g-\ol \chi\chi)(X,Y)
	+X((\pi^*g-\ol \chi\chi)(Y,Z))\\
	&\quad\quad+ Y((\pi^*g-\ol \chi\chi)(Z,X))
	- Z((\pi^*g-\ol \chi\chi)(X,Y))\\
	&\quad\quad-\ol \chi\chi([Z,X],Y)
	-\ol \chi\chi([X,Y],Z)
	+\overline{\chi}\chi([Y,Z],X)\\
	&\quad\quad- X(\pi^*g(Y,Z))
	- Y(\pi^*g(Z,X))
	+ Z(\pi^*g(X,Y))\\
	&\quad=2\,\frac{\dif r}{r}(X)\pi^*g(Y,Z)
	+2\,\frac{\dif r}{r}(Y)\pi^*g(Z,X)
	-2\,\frac{\dif r}{r}(Z)(\pi^*g-\ol \chi\chi)(X,Y)\\
	&\quad\quad
	+Z(\ol \chi\chi(X,Y))
	-\ol \chi\chi([Z,X],Y)
	+\ol \chi\chi([Y,Z],X).
	\end{split}
	\end{equation}
	We know that $\dif r/r$ is the real part of $\chi$, thus
	\begin{equation}\label{eq:Lie_Z}
	\begin{split}
	&2g\big(\pi_*(\nabla^{\widetilde{g}}_X Y)-\nabla^{g}_{X} \pi_*Y, Z'\big)\\
	&\qquad=(\chi+\ol \chi)(X)\pi^*g(Y,Z)
	+(\chi+\ol \chi)(Y)\pi^*g(Z,X)+0\\
	&\quad\quad
	+Z(\ol \chi\chi(X,Y))
	-\ol \chi\chi([Z,X],Y)
	+\ol \chi\chi([Y,Z],X)\\
	&\qquad=(\chi+\ol \chi)(X)\pi^*g(Y,Z)
	+(\chi+\ol \chi)(Y)\pi^*g(Z,X)
	+\Lie{Z}(\ol \chi\chi)(X,Y).
	\end{split}
\end{equation}
	We now need the Lie derivatives of $\chi$, $\ol \chi$, and $\ol \chi \chi$.
	By the Cartan formula,
	\begin{equation}
	\Lie{Z}\chi
	=\iota_Z d\chi+d\iota_Z\chi
	=-2\I\iota_Z\pi^*\omega, \qquad
	\Lie{Z}\ol \chi
	=\iota_Z d\ol \chi+d\iota_Z\ol \chi
	=2\I\iota_Z\pi^*\omega.
	\end{equation}
	The Lie derivative for $\ol \chi \chi$ then follows from a short computation:
	\begin{equation}
	\begin{split}
	\Lie{Z}(\ol \chi\chi)(X,Y)
	&=(\Lie{Z}\ol \chi\chi+\ol \chi\Lie{Z}\chi)(X,Y)\\
	&=(2\I(\iota_Z\pi^*\omega)\chi-2\I\ol \chi(\iota_Z\pi^*\omega))(X,Y)\\
	&=\I\pi^*\omega(Z,X)\chi(Y)
	+\I\pi^*\omega(Z,Y)\chi(X)\\
	&\quad-\I\ol \chi(X)\pi^*\omega(Z,Y)
	-\I\ol \chi(Y)\pi^*\omega(Z,X))\\
	& =-\I(\chi-\ol \chi)(X)\pi^*g(IY,Z)-\I(\chi-\ol \chi)(Y)\pi^*g(IX,Z).
	\end{split}
	\end{equation}
	Substituting the Lie derivatives we just computed into \eqref{eq:Lie_Z}, we get
	\begin{equation}
	\begin{split}
	&2g\big(\pi_*(\nabla^{\widetilde{g}}_X Y)-\nabla^{g}_{X} \pi_*Y, Z'\big)\\
	&\qquad=(\chi+\ol \chi)(X)\pi^*g(Y,Z)
	+(\chi+\ol \chi)(Y)\pi^*g(X,Z)\\
	&\qquad\quad-\I(\chi-\ol \chi)(X)\pi^*g(IY,Z)-\I(\chi-\ol \chi)(Y)\pi^*g(IX,Z)\\
	&\qquad=\chi(X)\pi^*g(Y-\I IY,Z)+\ol\chi(X)\pi^*g(Y+\I IY,Z)\\
	&\qquad\quad +\chi(Y)\pi^* g(X-\I IX,Z)+\ol \chi(Y)\pi^*g(X+\I IX,Z)\\
	&\quad\quad=2\pi^*g\big(\chi(X)Y^{1,0}+\ol \chi(X)Y^{0,1}+\chi(Y)X^{1,0}+\ol \chi(Y)X^{0,1},Z\big).
	\end{split}
	\end{equation}
	Hence,
	\begin{equation}
	\begin{split}
	&g\big(\pi_*(\nabla^{\widetilde{g}}_Y X)-\nabla^{g}_{\pi_* Y} \pi_*X, Z'\big)\\
	&\qquad=g\big(\pi_*\big(\chi(X)Y^{1,0}+\ol \chi(X)Y^{0,1}+\chi(Y)X^{1,0}+\ol \chi(Y)X^{0,1}\big),Z').
	\end{split}
	\end{equation}
	This proves the lemma, since $g$ is non-degenerate and $Z'$ arbitrary.
\end{proof}

\begin{lemma}\label{lemma:horizontal_action}
	Let $\pi\colon\widetilde{M}\to M$ be a PSK manifold with special connection $\wt \nabla$.
	Then, the $\wt \nabla$-horizontal lifts of the fundamental vector fields $\xi$ and $I \xi$ to $T\wt M$ generate a $\C^\times$-action on $T\wt M$.
	Moreover, a vector field $X$ on $\wt M$ is invariant under this action if and only if 
	\begin{equation}
	\wt \nabla_\xi X = \wt \nabla_{I\xi} X = 0.
	\end{equation} 	
\end{lemma}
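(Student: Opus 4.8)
The plan is to integrate $\xi^{\wt\nabla}$ and $(I\xi)^{\wt\nabla}$ by hand, showing they generate an honest $\C^\times$-action, and then to characterise invariant vector fields using the description of the flow of a horizontal lift as parallel transport along the flow lines downstairs. Since $\C^\times\cong\R_{>0}\times\U(1)$ is abelian and generated as a Lie group by these two one-parameter subgroups, it suffices to verify: (i) $[\xi^{\wt\nabla},(I\xi)^{\wt\nabla}]=0$; (ii) both lifts are complete; (iii) the flow of $(I\xi)^{\wt\nabla}$ is $2\pi$-periodic. Point (ii) is immediate, since parallel transport along a fixed curve is always globally defined and $\xi,I\xi$ are complete on $\wt M$ by hypothesis.

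For (i), torsion-freeness of $\wt\nabla$ together with $\wt\nabla\xi=\id$ and $\wt\nabla(I\xi)=I$ gives $[\xi,I\xi]=\wt\nabla_\xi(I\xi)-\wt\nabla_{I\xi}\xi=I\xi-I\xi=0$; hence $[\xi^{\wt\nabla},(I\xi)^{\wt\nabla}]$ is vertical and equals (up to sign) $\vt\big(R^{\wt\nabla}(\xi,I\xi)\Phi_{T\wt M}\big)$, which vanishes by flatness of $\wt\nabla$. If one prefers to stay within the tools above, the same follows from \eqref{eq:complete-split}, which reads here $\xi^T=\xi^{\wt\nabla}+\vt(\Phi_{T\wt M})$ and $(I\xi)^T=(I\xi)^{\wt\nabla}+\vt(I\Phi_{T\wt M})$, from $[\xi^T,(I\xi)^T]=[\xi,I\xi]^T=0$, and from the vanishing of the three cross terms: $[\vt(\Phi_{T\wt M}),\vt(I\Phi_{T\wt M})]$ is the vertical field associated to $[\id,I]=0$; $[\vt(\Phi_{T\wt M}),(I\xi)^{\wt\nabla}]=0$ because fibrewise scaling commutes with linear parallel transport; and $[\xi^{\wt\nabla},\vt(I\Phi_{T\wt M})]$ acts on a $1$-form $\alpha$ by $\alpha\mapsto\alpha\circ\wt\nabla_\xi I$, where $\wt\nabla_\xi I=0$ since $(\wt\nabla_\xi I)Y=(\wt\nabla_Y I)\xi=\wt\nabla_Y(I\xi)-I\wt\nabla_Y\xi=IY-IY=0$ by the ASK identity $(\wt\nabla_X I)Y=(\wt\nabla_Y I)X$.

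For (iii), write $(I\xi)^{\wt\nabla}=(I\xi)^T-\vt(I\Phi_{T\wt M})$. The complete lift $(I\xi)^T$ has flow $(u,v)\mapsto(\psi_t(u),(\psi_t)_*v)$, where $\psi_t$ is the flow of $I\xi$; since $I\xi$ generates the $\U(1)$-factor of the $\C^\times$-action we have $\psi_{2\pi}=\id_{\wt M}$, so this flow is $2\pi$-periodic. The vertical field $\vt(I\Phi_{T\wt M})$ has the fibrewise linear flow $v\mapsto e^{tI}v$, which is $2\pi$-periodic because $I^2=-\id$. These two fields commute: their bracket is vertical, and on a $1$-form $\alpha$ it acts by $\alpha\mapsto\alpha\circ\Lie{I\xi}I$, with $\Lie{I\xi}I=\wt\nabla_{I\xi}I=0$ (the first equality by torsion-freeness and $\wt\nabla(I\xi)=I$, the second since $(\wt\nabla_{I\xi}I)Y=(\wt\nabla_Y I)(I\xi)=-\wt\nabla_Y\xi-I\wt\nabla_Y(I\xi)=-Y+Y=0$ by the ASK identity). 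Hence the flow of $(I\xi)^{\wt\nabla}$ is the composite of two $2\pi$-periodic flows and is itself $2\pi$-periodic. Together with (i) and (ii) this exhibits $\xi^{\wt\nabla}$ and $(I\xi)^{\wt\nabla}$ as the fundamental vector fields associated to $1,\I\in\C$ of a $\C^\times$-action on $T\wt M$.

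For the second assertion, a vector field $X$ on $\wt M$ is a section $\wt M\to T\wt M$, and it is invariant under the lifted $\C^\times$-action if and only if it is invariant under the flows of $\xi^{\wt\nabla}$ and $(I\xi)^{\wt\nabla}$ (these subgroups generating $\C^\times$). Since the flow of $\xi^{\wt\nabla}$ is parallel transport along the integral curves of $\xi$, invariance of $X$ under it is equivalent to $X$ being $\wt\nabla$-parallel along those curves, i.e.\ to $\wt\nabla_\xi X=0$; likewise invariance under the flow of $(I\xi)^{\wt\nabla}$ is equivalent to $\wt\nabla_{I\xi}X=0$, giving the stated characterisation. The main obstacle is (iii): the $\U(1)$-orbits of $\wt M$ need not be null-homotopic (for instance when the line bundle $L$ is non-trivial), so flatness of $\wt\nabla$ alone does not force its holonomy around these orbits to be trivial; it is the combination of $\wt\nabla(I\xi)=I$ with the holomorphicity of the $\C^\times$-action, $\wt\nabla_\xi I=\wt\nabla_{I\xi}I=0$, that does so, and the decomposition \eqref{eq:complete-split} is what brings this out cleanly.
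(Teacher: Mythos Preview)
Your proof is correct. The second assertion is handled exactly as in the paper, via the identification of the flow of a horizontal lift with parallel transport. For the first assertion, however, your route differs from the paper's.

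The paper does not verify commutativity, completeness, and periodicity separately. Instead, it writes down an explicit $\C^\times$-action on $T\wt M$ from the outset, namely
\[
X_u \longmapsto \lambda^{-1}(\rho_\lambda)_* X_u,
\]
where $\rho_\lambda$ is the principal action on $\wt M$, and then computes its infinitesimal generators. Differentiating in the $\R_{>0}$- and $\U(1)$-directions yields $\xi^T-\vt(\Phi_{T\wt M})$ and $(I\xi)^T-\vt(I\Phi_{T\wt M})$, which by \eqref{eq:complete-split} are exactly $\xi^{\wt\nabla}$ and $(I\xi)^{\wt\nabla}$. Commutativity and periodicity then come for free, since one already has a global $\C^\times$-action.

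Your approach assembles the same action from its pieces: you show the two horizontal lifts commute, and you prove periodicity of $(I\xi)^{\wt\nabla}$ by decomposing it as $(I\xi)^T-\vt(I\Phi_{T\wt M})$ and observing that both summands have $2\pi$-periodic flows and commute (using $\Lie{I\xi}I=\wt\nabla_{I\xi}I=0$). In effect you are computing the flow of $(I\xi)^{\wt\nabla}$ as $v\mapsto e^{-tI}(\psi_t)_*v$, which is precisely the $\U(1)$-part of the paper's action. So the two arguments meet at the same formula, but the paper starts from it while you arrive at it. The paper's presentation is shorter; yours has the virtue of isolating exactly which CSK identities ($\wt\nabla\xi=\id$, $\wt\nabla(I\xi)=I$, and the consequent $\wt\nabla_\xi I=\wt\nabla_{I\xi}I=0$) force the holonomy of $\wt\nabla$ around the $\U(1)$-orbits to be trivial, a point your closing remark makes explicit and which is somewhat hidden in the paper's more synthetic argument.
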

\begin{proof}
	Let us denote the principal $\C^\times$-action on $\wt M$ as
	\begin{align}
		\rho_\lambda(u)\colonequals u \cdot \lambda,
		&\hfill&
		\lambda \in \C^\times.
	\end{align}
	Now consider the following principal $\C^\times$-action on $T\wt M$:
	\begin{equation}\label{eq:horizontal_lift_principal_action}
		X_u \longmapsto \lambda^{-1}(\rho_\lambda)_*X_u\in (TM)_{\rho_{\lambda}(u)}.
	\end{equation}
	This action is generated by vector fields $Y^1$ and $Y^\I$ given by
	\begin{equation}
		\begin{split}
		Y^1_{X_u}
		&=\left.\frac{\dif}{\dif t}( e^{-t}(\rho_{e^t})_*X_u)\right|_{t=0}
		=\left.\left((\rho_{e^t})_*\frac{\dif}{\dif t}(e^{-t}X_u)+ e^{-t}\frac{\dif}{\dif t}((\rho_{e^t})_*X_u)\right)\right|_{t=0}\\
		&=-\vt(\Phi_{T\wt M})_{X_u} + \xi^T_{X_u},\\
		Y^\I_{X_u}
		&=\left.\frac{\dif}{\dif t}(e^{-\I t}(\rho_{e^{\I t}})_*X_u)\right|_{t=0}
		=\left.\left((\rho_{e^{\I t}})_*\frac{\dif}{\dif t}( e^{-\I t} X_u)+ e^{-\I t}\frac{\dif}{\dif t}((\rho_{e^{\I t}})_* X_u)\right)\right|_{t=0}\\
		&=-\vt(I\Phi_{T\wt M})_{X_u} + (I\xi)^T_{X_u}.
		\end{split}
	\end{equation}
	In the previous equation, we implicitly exploited the fact that \eqref{eq:horizontal_lift_principal_action} is the composition of two commuting actions: a scalar multiplication and a pushforward.
	Meanwhile, by the splitting of the complete lift in \eqref{eq:complete-split}, we have
	\begin{equation}
	\begin{split}
		\xi^T
		&= \xi^{\wt \nabla}
		+ \vt\big((\wt \nabla \xi)\Phi_{T\wt M}\big)
		= \xi^{\wt \nabla}
		+ \vt\big(\Phi_{T\wt M}\big),\\
		(I\xi)^T
		&= (I\xi)^{\wt \nabla}
		+ \vt\big((\wt \nabla (I\xi))\Phi_{T\wt M}\big)
		= (I\xi)^{\wt \nabla}
		+ \vt\big(I\Phi_{T\wt M}\big).
	\end{split}
	\end{equation}
	Thus, we find that $Y^1 = \xi^{\wt \nabla}$ and $Y^\I = (I\xi)^{\wt \nabla}$, so the horizontal lifts indeed generate a $\C^\times$-action on $T\wt M$. 

	Moreover, from our earlier discussion on connections in \S \ref{ssec:Connections}, we know that the flows of the horizontal lifts of $\xi$ and $I\xi$ are precisely the parallel transport along the integral curves of $\xi$ and $I\xi$. A section $X$ of $TM$ is invariant under such parallel transport if and only if $\widetilde \nabla_\xi X$ and $\widetilde \nabla_{I\xi} X$ vanish.
\end{proof}

\subsection{Variations of Hodge structure}

We will now build up the definition of variation of polarised Hodge structure in several steps. Although the definitions are motivated by the cohomology groups of a non-singular complex variety and their deformation theory, we take an abstract differential-geometric approach along the lines of \cite{Simpson}. In particular, we relax the integrality condition that is often included as part of the definition of Hodge structures. 

\begin{defi}[Hodge structure]
	A Hodge structure on a real vector space $V$ of weight $d\in \mathbb Z$ is a decomposition of its complexification 
	\begin{equation}
	V_\mathbb C = \bigoplus_{p+q=d} V^{p,q}
	\end{equation}
	satisfying $\ol{V^{p,q}} = V^{q,p}$. 
\end{defi}
We will only be considering Hodge structures of positive weight such that $V^{p,q} = 0$ whenever either $p$ or $q$ is negative. 

\begin{eg}\label{eg:running-eg}
	Let $K$ be a compact Kähler manifold. Then the Dolbeault theorem tells us that the $d$-th de Rham cohomology $H^d_{\mathrm{dR}}(K)$ admits a Hodge structure of weight $d$, namely
	\begin{equation}
		H^d_{\mathrm{dR}}(K;\C)
		=\bigoplus_{p+q=d} H^q(K;\Omega^p)
		= \bigoplus_{p+q=d}H^{p,q}_{\ol\partial}(K),
	\end{equation}
	where $\Omega^p$ denotes the sheaf of holomorphic $p$-forms and $\ol \partial$ denotes the Dolbeault operator.
\end{eg}

\begin{defi}[Polarised Hodge structure]
	A polarised Hodge structure of weight $d$ on a real vector space $V$, denoted by $(V_\C=\bigoplus V^{p,q},Q)$, is the data of a Hodge structure $V_\C=\bigoplus V^{p,q}$ of weight $d$ on $V$ together with a $\C$-bilinear form $Q$ on $V_\C$ satisfying
	\begin{align}\label{eq:polarisation}
	Q(v_1 ,v_2)&=(-1)^{d}Q(v_2 ,v_1) &\hfill&\text{ for }v_1,v_2 \in V_{\C};\nonumber\\
	Q(v_1 ,v_2 )&=0 &\hfill&\text{ for }v_1 \in V^{p,q},v_2 \in V^{p',q'},p\neq q';\\
	\I^{p-q}Q\left(\ol{v },v\right)&>0 &\hfill&\text{ for }v \in V^{p,q},\ v \neq 0.\nonumber
	\end{align}
\end{defi}

\begin{eg}
	Let the compact Kähler manifold $K$ considered in Example \ref{eg:running-eg} be of complex dimension $n$.
	Then the cohomology group $H^d_{\mathrm{dR}}(K)$, with $d\le n$, has a natural bilinear pairing given by
	\begin{equation}
		Q(\alpha,\beta) = \int_K \frac{1}{(n-d)!}\omega^{n-d}\wedge\beta \wedge \alpha.
	\end{equation}
	The Lefschetz operator is compatible with this pairing, and therefore it induces a bilinear pairing on the subspace of primitive cohomology.
	Notice that the primitive cohomology inherits a Hodge decomposition, and the induced bilinear pairing $Q$ on it satisfies all three conditions in \eqref{eq:polarisation} (cf.\ \S 7.1.2 of \cite{Voisin}), the last one being a consequence of the Hodge--Riemann bilinear relations.
	
	In particular, when $K$ is a simply connected Calabi--Yau manifold of complex dimension $3$, all $3$-forms are primitive, so the whole of $H^3_{\mathrm{dR}}(K)$ admits a polarised Hodge structure with the aforementioned pairing.
\end{eg}

Next we will consider families of such polarised Hodge structures that are parametrised by some complex manifold.

\begin{defi}[Variation of polarised Hodge structure]
	A variation of polarised Hodge structure (VPHS) $(E_{\C}=\bigoplus E^{p,q}, Q, \nabla)$ of weight $d$ on a complex manifold $M$ is a real vector bundle $E \rightarrow M$ together with a flat connection $\nabla$, a $\nabla$-parallel bilinear form $Q\in \Gamma(E^*_\C\otimes E^*_\C)$, and a decomposition of complex vector bundles
	\begin{equation}
	E_{\C} = \bigoplus_{p+q=d} E^{p,q},
	\end{equation}
	such that the fibre $E_m$ over a point $m\in M$ carries a polarised Hodge structure $((E_m)_{\C}=\bigoplus E^{p,q}_m,Q_m)$ of weight $d$ and $\nabla$ gives a map
	\begin{equation}\label{eq:grif-trans}
	\nabla\colon \Gamma(E^{p,q}) \longrightarrow \Omega^{0,1}(E^{p+1,q-1}) \oplus \Omega^{1,0}(E^{p,q}) \oplus \Omega^{0,1}(E^{p,q}) \oplus \Omega^{1,0}(E^{p-1,q+1}).
	\end{equation}
\end{defi}
The connection $\nabla$ is called \emph{Gau\ss--Manin connection}, while \eqref{eq:grif-trans} is called \emph{Griffiths transversality condition}.

\begin{eg}
Consider a holomorphic fibre bundle $K\to M$ such that fibres are (simply connected, compact) Calabi--Yau $3$-folds.
This can be regarded as a family $\{K_m\}_{m\in M}$ of Calabi--Yau $3$-folds holomorphically parametrised by points $m$ of $M$.
The de Rham cohomology groups $H_{\mathrm{dR}}^3(K_m)$, then produce a bundle of polarised Hodge structures of weight $3$.
	
	Moreover, for any contractible $U\subseteq M$, we can choose a family of submanifolds $C^i\subseteq K|_{U}$ such that $C^i_m\colonequals C^i\cap K_m$ is a compact submanifold of $K_m$ of real dimension $3$, and furthermore, $\{[C^i_m]\}_i$ is a basis of homology classes of the free part of $H_3(K_m)$.
	This gives a dual basis of $3$-forms $\{\alpha_i\}_i$ in $H^3_{\mathrm{dR}}(K_m)$.

	Two different choices of basis $\{C^i_m\subset K_m\}_i$ and $\{C'^i_m\subset K_m\}_i$ are related by an integral (and hence locally constant) linear transformation.
	Thus, the associated dual bases of $3$-forms $\{\alpha_i\}_i$ and $\{\alpha'_i\}_i$ are also related by a locally constant linear transformation.
	This then defines a flat connection on the bundle $\{H^3_{\mathrm{dR}}(K_m)\}_{m\in M}$ that preserves the natural polarisation $Q$.
	Deducing the Griffiths transversality condition requires a bit more work, cf.\ §10.2.2 of \cite{Voisin}.
\end{eg}

On a variation of Hodge structure with odd weight, there are two natural complex structures $I_{\mathrm G}$ and $I_{\mathrm W}$, respectively called the \emph{Griffiths} and \emph{Weil} complex structures, defined on a section $\sigma\in\Gamma(E^{p,q})$ by
\begin{align}
I_{\mathrm G} X=\sign(p-q)\I X,
&\hfill&
I_{\mathrm W} X=\I^{p-q} X.
\end{align}
These, in turn, give rise to two Hermitian structures $h_{\mathrm G}$ and $h_{\mathrm W}$ respectively called the \emph{Griffiths} and \emph{Weil} hermitian forms defined by
\begin{align}
h_{\mathrm G}=Q(\cdot,I_{\mathrm G}\cdot)+\I Q,
&\hfill&
h_{\mathrm W}=Q(\cdot,I_{\mathrm W} \cdot)+\I Q.
\end{align}
Notice that both forms have $Q$ as imaginary part.

We will now show that the notion of PSK structure has an equivalent reformulation in terms of certain VPHS of weight $3$. We mainly follow the chain of reasoning given by Freed in \cite[Proposition 4.6 (p.\ 46)]{Freed1999}, but work out the correspondence much more explicitly.

We first show that PSK structures $\pi\colon \wt M \rightarrow M$ give rise to a certain VPHS of weight $3$ on $M$.

\begin{prop}\label{pr:VPHS1}
	Let $\pi\colon \wt M \rightarrow M$ be a PSK manifold and let $L$ be the holomorphic line bundle associated to the principal $\C^\times$-bundle $\widetilde M \rightarrow M$ described in Remark \ref{rmk:Mtilde_and_L}.
	Then $M$ carries a VPHS of weight $3$ given by the direct sum
	\begin{equation}\label{eq:HD}
		L\oplus (L \otimes T^{1,0}M) \oplus (\ol{L} \otimes T^{0,1}M) \oplus \ol{L}.
	\end{equation}
	The Gau\ss--Manin connection is given by
	\begin{equation}\label{eq:GM}
	\begin{split}
	&\nabla_Y(\sigma_1 + \sigma_2 \otimes Z + \ol \varsigma_2 \otimes \ol W + \ol \varsigma_1)\\
	&\qquad= \nabla_Y^{\chi}\sigma_1 +h_M(Y^{0,1},Z)\sigma_2\\
	&\qquad\quad + \nabla_Y^{\chi,g_M}(\sigma_2 \otimes Z) + \sigma_1 \otimes Y^{1,0}+ (\ol h_L{}^{-1} \circ \ol \eta)_{Y^{0,1}}(\ol \varsigma_2 \otimes \ol W)\\
	&\qquad\quad + \nabla_Y^{\ol \chi,g_M}(\ol\varsigma_2 \otimes \ol W) + \ol\varsigma_1 \otimes Y^{0,1} + (h_L^{-1} \circ \eta)_{Y^{1,0}}(\sigma_2 \otimes Z)\\
	&\qquad\quad + \nabla_Y^{\ol \chi}\ol \varsigma_1 + h_M(W,Y^{1,0})\ol\varsigma_2,
	\end{split}
	\end{equation}
	where $\sigma_1,\sigma_2$ are sections of $L$; $Z$ is a section of $T^{1,0}M$; $\ol W$ is a section of $T^{0,1}M$; $\ol \varsigma_1,\ol \varsigma_2$ are sections of $\ol L$; and $Y$ is a vector field on $M$.
\end{prop}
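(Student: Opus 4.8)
The plan is to verify directly that the direct sum \eqref{eq:HD}, equipped with the decomposition into the four summands $L$, $L\otimes T^{1,0}M$, $\ol L\otimes T^{0,1}M$, $\ol L$ (to be taken as $E^{3,0}$, $E^{2,1}$, $E^{1,2}$, $E^{0,3}$ respectively), the connection $\nabla$ defined by \eqref{eq:GM}, and a suitable polarisation $Q$, satisfies all the axioms of a VPHS of weight $3$. The argument breaks into four parts: (i) construct $Q$ and check it is a polarisation on each fibre; (ii) check $\nabla$ is a connection (i.e.\ the right-hand side of \eqref{eq:GM} is indeed tensorial in $Y$ modulo the Leibniz terms, and additive in the section); (iii) check $\nabla$ is flat and $\nabla Q = 0$; (iv) check Griffiths transversality \eqref{eq:grif-trans}. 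The natural candidate for $Q$ is built from the hermitian form $h_L$ on $L$, the Kähler metric $h_M$ (or $g_M$, $\omega_M$) on $M$, and the natural dualities: $Q$ should pair $L=E^{3,0}$ with $\ol L=E^{0,3}$ via $h_L$, and $L\otimes T^{1,0}M=E^{2,1}$ with $\ol L\otimes T^{0,1}M=E^{1,2}$ via $h_L\otimes h_M$, with signs dictated by the weight-$3$ skew-symmetry and by the positivity requirement $\I^{p-q}Q(\ol v, v)>0$. One should fix normalisations so that the third condition in \eqref{eq:polarisation} holds with the correct sign on each of $E^{3,0},E^{2,1},E^{1,2},E^{0,3}$; this is a finite sign bookkeeping exercise.

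For (ii) and (iv), the key observation is that the ``diagonal'' blocks of \eqref{eq:GM} are the Chern/pullback connections $\nabla^\chi$ on $L$ (Remark~\ref{rmk:Mtilde_and_L}), $\nabla^{\ol\chi}$ on $\ol L$, $\nabla^{\chi,g_M}$ on $L\otimes T^{1,0}M$, $\nabla^{\ol\chi,g_M}$ on $\ol L\otimes T^{0,1}M$ — genuine linear connections — while the ``off-diagonal'' terms are $C^\infty(M)$-linear bundle maps: $\sigma_1\mapsto \sigma_1\otimes Y^{1,0}$, $\sigma_2\otimes Z\mapsto h_M(Y^{0,1},Z)\sigma_2$, and the two deviance terms $(h_L^{-1}\circ\eta)_{Y^{1,0}}$ and $(\ol h_L{}^{-1}\circ\ol\eta)_{Y^{0,1}}$ built from the intrinsic deviance $\eta\colon L\otimes L\to \sharp_2 S_{3,0}M$ of \S\ref{ssec:Formulas_and_properties}. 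Hence $\nabla$ is a connection, and reading off the types of the four contributions — the diagonal terms split into their $(1,0)$ and $(0,1)$ parts lying in $\Omega^{1,0}(E^{p,q})\oplus\Omega^{0,1}(E^{p,q})$; the map $\sigma_1\mapsto\sigma_1\otimes Y^{1,0}$ lands in $\Omega^{1,0}(E^{2,1})$ when applied to $E^{3,0}$; the contraction $h_M(Y^{0,1},\cdot)$ lands in $\Omega^{0,1}(E^{3,0})$ when applied to $E^{2,1}$; the deviance $\eta$ being of type $(3,0)$ means $(h_L^{-1}\circ\eta)_{Y^{1,0}}$ maps $E^{2,1}\to\Omega^{1,0}(E^{1,2})$ lowering $p$ by one, and its conjugate does the mirror — gives exactly the four allowed slots in \eqref{eq:grif-trans}. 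So Griffiths transversality is essentially immediate once the types are tabulated.

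The genuine work is part (iii): flatness of $\nabla$ and $\nabla Q=0$. For $\nabla Q=0$ one expands $Q$ in terms of $h_L$, $h_M$ and computes, using that $\nabla^\chi$ is the Chern connection (so its $(0,1)$-part is $\ol\partial_L$ and it is metric for $h_L$), that $\nabla^{g_M}$ is the Levi-Civita/Chern connection for the Kähler metric, and that the off-diagonal maps are adjoint to one another with respect to these hermitian structures — this is where the precise normalisation of $Q$ gets pinned down, and where one uses that the deviance term and the $h_M(Y^{0,1},\cdot)$ term are mutually adjoint, and likewise the $\sigma_1\otimes Y^{1,0}$ term against the conjugate deviance. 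The flatness computation $R^\nabla=0$ is the main obstacle: writing $\nabla = D + A$ with $D$ the block-diagonal connection and $A$ the off-diagonal endomorphism-valued $1$-form, one has $R^\nabla = R^D + D A + A\wedge A$, and one must show these cancel block by block. Here the essential inputs are: the curvature of $\nabla^\chi$ equals $-2\I\,\omega_M$ (from $\dif\chi = -2\I\pi^*\omega_M$, Lemma~\ref{lemma:derivative_chi} and its corollary), which must cancel against the commutator of the two ``$h_M$-type'' off-diagonal maps; the curvature of the Kähler connection $\nabla^{g_M}$ on $T^{1,0}M$, which must combine with the remaining curvature terms and the $A\wedge A$ contributions involving $\eta$; and crucially the identity $\ol\partial^{\wt g}\wt\eta = 0$ (the flatness of $\wt\nabla$, descending to a statement that $\eta$, suitably interpreted, is holomorphic / $\nabla^{\chi,g_M}$-parallel in the $(0,1)$ direction), which makes the mixed curvature terms involving $D A$ vanish. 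This is precisely the content that encodes ``$\wt\nabla$ flat and torsion-free'' on the PSK side, and I expect checking the $E^{2,1}\to E^{2,1}$ and $E^{2,1}\to E^{1,2}\to E^{2,1}$ blocks — where the deviance, the Kähler curvature, and the line-bundle curvature all interact — to be the hardest single computation; the other blocks follow by conjugation or are comparatively short. Throughout, Lemma~\ref{lemma:LC_difference} and the Lie-derivative lemmas of \S\ref{ssec:Formulas_and_properties} are the tools that let one rewrite everything in terms of data on $M$ rather than on $\wt M$.
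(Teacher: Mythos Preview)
Your approach is valid in outline but takes a genuinely different route from the paper. You propose to \emph{define} $\nabla$ by the formula \eqref{eq:GM} and then verify flatness and $\nabla Q=0$ by a direct block-by-block curvature computation $R^\nabla = R^D + \dif^D A + A\wedge A$. The paper instead \emph{constructs} $E$ as the quotient of $T\wt M$ by the $\wt\nabla$-horizontal lift of the $\C^\times$-action (Lemma~\ref{lemma:horizontal_action}), builds an explicit isomorphism $\phi\colon T\wt M_\C \to (E_\C)_\pi$, and shows that both $\wt\nabla$ and $\nabla^{\wt g}$ descend through this quotient (by checking the Lie-derivative criterion \eqref{eq:pullback-conn}); flatness of $\nabla$ and $\nabla Q=0$ are then \emph{inherited} for free from the CSK axioms $R^{\wt\nabla}=0$ and $\wt\nabla\wt\omega=0$, with no curvature algebra on $M$ at all. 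The formula \eqref{eq:GM} is \emph{derived} at the end, by first computing the descent $\widehat\nabla$ of $\nabla^{\wt g}$ via Lemmas~\ref{lemma:derivative_chi} and~\ref{lemma:LC_difference}, and then adding the descended difference tensor $\wt\eta\rightsquigarrow h_L^{-1}\circ\eta$. Your direct verification would go through, but the hardest block you flag --- the $E^{2,1}\to E^{2,1}$ piece --- is exactly the curvature identity $R^{g_M}+R_{\Pj^n_\C}+\langle h_L^{-1}\otimes h_L^{-1},[\eta\wedge\ol\eta]\rangle=0$ of \eqref{eq:D1_D2}, itself a nontrivial consequence of flatness of $\wt\nabla$ upstairs; establishing it forces you to descend from $\wt M$ anyway, at which point the paper's route is shorter. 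What your approach buys is that the equivalence with the intrinsic characterisation \eqref{eq:D1_D2} of \cite{MMPSK2019} becomes explicit; what the paper's approach buys is that flatness is never checked, only transported, so the only real computation is the single derivation of $\widehat\nabla$.
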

\begin{proof}
	A PSK manifold $M$ may be given in terms of a principal $\C^\times$-bundle $\pi\colon \widetilde M \rightarrow M$ which is CSK and so equipped with a flat torsion-free connection $\widetilde \nabla$.
	Let $E$ be the quotient of $T\widetilde M$ by the $\widetilde \nabla$-horizontal lift of the principal $\C^\times$-action on $\widetilde M$ (cf.\ Lemma \ref{lemma:horizontal_action}).
	The complexification $E_\C$ is then the corresponding quotient of $T\widetilde M_\C$ and forms a complex vector bundle over $M$.
	We shall show that this is equipped with the Hodge decomposition as in \eqref{eq:HD}.
	
	In order to do so, we first need to describe the bundle $E_{\C}$ more explicitly.
	First of all, we prove that there is a bundle isomorphism
	\begin{equation}\label{eq:TMisoEpi}
	\phi\colon T\wt M_{\C} \longrightarrow \left(L\oplus (L \otimes T^{1,0}M) \oplus (\ol{L} \otimes T^{0,1}M) \oplus \ol{L}\right)_{\pi}.
	\end{equation}
	Recall that we can identify $\widetilde{M}$ with $L\smallsetminus \{0\}$, cf.\ Remark \ref{rmk:Mtilde_and_L}.
	With the above identification, it makes sense to talk of the tautological section $\Phi_L$ of the pullback bundle $L_{\pi}$.	
	We define $\phi$ separately on the holomorphic and antiholomorphic part by the maps
	\begin{align}
	\begin{split}
	\phi^{1,0}=\chi\otimes \Phi_L+\Phi_L\otimes \pi_*\colon T^{1,0}\widetilde{M} &\longrightarrow (L\oplus (L \otimes T^{1,0}M))_{\pi},\\
	\phi^{0,1}=\ol \Phi_L\otimes \pi_*+\ol\chi\otimes \ol\Phi_L\colon T^{0,1}\widetilde{M} &\longrightarrow ((\ol{L} \otimes T^{0,1}M) \oplus \ol{L})_{\pi},
	\end{split}
	\end{align}
	so that $\phi=\phi^{1,0}+\phi^{0,1}$.
	Explicitly,
	\begin{equation}
	\phi(X)
	= \chi(X)\Phi_L + \Phi_L \otimes \pi_*X^{1,0} + \ol\Phi_L \otimes \pi_*X^{0,1} + \ol \chi(X)\ol \Phi_L.
	\end{equation}
	We will now describe an inverse for $\phi^{1,0}$.
	The tautological section $\Phi_L$ is a global one for the bundle $L_{\pi}$, so, in order to describe a map from $(L\oplus L\otimes TM^{1,0})_{\pi}$ to $T^{1,0}\widetilde{M}$, it is enough to see how it acts on $f\Phi_L+\Phi_L\otimes X$ with $f\in\smooth{\widetilde{M},\C}$ and $X$ a section of $(T^{1,0}M)_{\pi}$.
	The inverse of $\phi^{1,0}$ is given by 
	\begin{equation}
	f\Phi_L+\Phi_L\otimes X\longmapsto f\zeta+X^{\chi}.
	\end{equation}
	Analogously, we may show that $\phi^{0,1}$ has the inverse
	\begin{equation}
	f\ol\Phi_{ L}+\ol\Phi_{ L}\otimes X\longmapsto f\ol\zeta+X^{\ol \chi}.
	\end{equation}
	Thus, $\phi$ is invertible.

	We will now prove that $\phi$ descends through the quotient to an isomorphism 
	\begin{equation}
	E_{\C}\cong L\oplus (L \otimes T^{1,0}M) \oplus (\ol{L} \otimes T^{0,1}M) \oplus \ol{L}.
	\end{equation}
	Let us recall that two vector bundles are isomorphic if and only if their sheaves of sections are.
	By Lemma \ref{lemma:horizontal_action}, we know that sections of $E$ are in bijective correspondence with sections $X\in\Gamma(T\widetilde M)$ such that $\widetilde \nabla X$ vanishes on the vertical vector fields $\xi$ and $I\xi$. Equivalently,
	\begin{equation}
	\widetilde \nabla_\zeta X = \widetilde \nabla_{\ol \zeta} X = 0.
	\end{equation} 
	This, in turn, is equivalent to saying that we have the Lie derivatives
	\begin{align}\label{eq:Lie_derivatives_sections}
	\Lie{\zeta} X
	= \widetilde \nabla_\zeta X - \widetilde \nabla_{X}\zeta
	= - X^{1,0},
	&\hfill&
	\Lie{\ol \zeta} X
	= \widetilde \nabla_{\ol \zeta} X - \widetilde \nabla_{X}\ol\zeta
	= - X^{0,1}.
	\end{align}
	Applying $\pi_*$ to these equations, we obtain for all $p\in M$, a system of differential equations for $\pi_* X$ interpreted as a function $\wt M_p\to T_p M$.
	By solving them, we can say that, given $u\in \wt M_p$ and $\lambda\in\C^\times$, we have
	\begin{equation}\label{eq:lambda-trans-1}
	(\pi_*X)_{\lambda u} = \lambda^{-1}(\pi_*X^{1,0})_u + \ol \lambda{}^{-1}(\pi_*X^{0,1})_u.
	\end{equation}
	The identification of $\wt M$ as a subbundle of $L$ also gives
	\begin{equation}\label{eq:invariance_tautological_form}
	(\Phi_L)_{\lambda u}
	=\lambda u
	=\lambda(\Phi_L)_u.
	\end{equation}
	If \eqref{eq:Lie_derivatives_sections} holds, then, by Lemma \ref{lemma:chi_invariance}, we have $\Lie{\zeta}(\chi(X))=-\chi(X^{1,0})=-\chi(X)$ and $\Lie{\ol\zeta}(\chi(X))=-\chi(X^{0,1})=0$, which imply
	\begin{equation}\label{eq:variation_chi(X)}
	\chi(X)_{\lambda u}=\lambda^{-1}\chi(X)_u.
	\end{equation}
	Similarly, with $\ol \chi$ we obtain
	\begin{equation}\label{eq:variation_chibar(X)}
	\ol \chi(X)_{\lambda u}=\ol \lambda{}^{-1}\ol \chi(X)_u.
	\end{equation}
	Combining \eqref{eq:variation_chi(X)}, \eqref{eq:variation_chibar(X)}, \eqref{eq:invariance_tautological_form} and \eqref{eq:lambda-trans-1}, we get
	\begin{equation}
	\phi(X)_{\lambda u} = \phi(X)_u.
	\end{equation}
	The same can be said for $\phi^{1,0}(X)$ and $\phi^{0,1}(X)$.
	In other words, $\phi(X)$ is the pullback of some section of
	\begin{equation}\label{eq:E_explicit}
	L\oplus (L \otimes T^{1,0}M) \oplus (\ol{L} \otimes T^{0,1}M) \oplus \ol{L}.
	\end{equation}
	Thus, we may identify this bundle with $E_{\C}$.
	In particular, we get a Hodge decomposition for $E_{\C}$ given by
	\begin{equation}
	E^{3,0} = L, \quad E^{2,1} = L \otimes T^{1,0}M, \quad E^{1,2} = \ol L \otimes T^{0,1}M, \quad E^{0,3} = \ol L.
	\end{equation}
	
	Next, we describe in terms of $\wt \omega$ the bilinear form $Q$, which for a VPHS of degree $3$ is skew-symmetric.
	Note that the symplectic form $\widetilde \omega$ is $\widetilde \nabla$-parallel, so it descends to a skew-symmetric bilinear form on $E_{\C}$.
	We define $Q$ to be this skew-symmetric bilinear form.
	
	Let $h_L$ and $h_M$ be the Hermitian forms defined on $L$ and $TM_\C$ respectively.
	Then, $-h_L + h_L \otimes h_M$ may be regarded as a bilinear form on $E_{\C}$.
	Notice that
	\begin{equation}
	\begin{split}
	&(-h_L + h_L \otimes h_M)(\phi(X),\phi(Y))\\
	&\qquad\qquad= -\ol \chi(X)\chi (Y)h_L(\ol\Phi_L, \Phi_L) +h_L(\ol \Phi_L, \Phi_L)\pi^*h_M(X^{0,1},Y^{1,0})\\
	&\qquad\qquad=-r^2\ol \chi(X)\chi(Y) + r^2\pi^*h_M(X,Y)
	= \widetilde h(X,Y).
	\end{split}
	\end{equation}
	Thus, $\Alt(-h_L + h_L \otimes h_M)$ corresponds to $\I\wt\omega$, from which we infer
	\begin{equation}
	Q = \I\, \Alt(h_L - h_L \otimes h_M).
	\end{equation}
	Since $h_M$ and $h_L$ are positive-definite, it follows that
	\begin{align}
	\I^{3-0}Q(\ol s_1, s_1)= \frac{1}{2}\,h_L(\ol s_1, s_1) > 0,
	&\hfill&
	\I^{2-1}Q(\ol s_2, s_2) =\frac{1}{2}\, (h_L \otimes h_M)(\ol s_2,s_2)
	\end{align}
	for all local non-vanishing sections $s_1$ and $s_2$ of $L$ and $L \otimes T^{1,0}M$ respectively.
	
	Finally, we describe the Gau\ss--Manin connection and show that it satisfies the Griffiths transversality condition. To describe the Gau\ss--Manin connection $\nabla$, we first show that any connection $\nabla'$ satisfying
	\begin{align}\label{eq:pullback-conn}
	\Lie{\zeta}\nabla' =-\frac{\I}{ 2}\, \nabla'I,
	& \hfill &
	\Lie{\ol\zeta}\nabla' =\frac{\I}{ 2}\, \nabla'I,
	\end{align}
	is the pullback of some connection on $E$.
	
	Let $X$ be a vector field on $\wt M$ descending to a section of $E$, and hence satisfying $\Lie{\zeta}X=-X^{1,0}$ and $\Lie{\ol\zeta}X=-X^{0,1}$.
	Let $Y$ be a vector field on $\wt M$ such that $\pi_*Y$ is the pullback of some section of $TM$, and thus $\Lie{\zeta}Y=\Lie{\ol\zeta}Y=0$.
	As a consequence, we get
	\begin{align}
	\begin{split}
	\Lie{\zeta}(\nabla'_YX)
	&= (\Lie{\zeta}\nabla')_YX + \nabla'_{\Lie{\zeta}Y}X + \nabla'_Y(\Lie{\zeta}X)
	= -\frac{\I}{2}(\nabla'_YI)X - \nabla'_YX^{1,0}\\
	&=-\frac{1}{2}[\id - \I I, \nabla'_Y]X - \frac{1}{2}\nabla'_Y((\id -\I I)X)\\
	&=-\frac{1}{2}(\id -\I I)(\nabla'_Y(X)) =- (\nabla'_Y(X))^{1,0},\\
	\Lie{\ol \zeta}(\nabla'_YX)
	&= (\Lie{\ol \zeta}\nabla')_YX + \nabla'_{\Lie{\ol\zeta}Y}X + \nabla'_Y(\Lie{\ol\zeta}X)
	= \frac{\I}{2}(\nabla'_YI)X - \nabla'_YX^{0,1}\\
	&=-\frac{1}{2}[\id + \I I, \nabla'_Y]X - \frac{1}{2}\nabla'_Y((\id +\I I)X)\\
	&=-\frac{1}{2}(\id +\I I)(\nabla'_Y(X)) =- (\nabla'_Y(X))^{0,1}.
	\end{split}
	\end{align}
	So, the vector field $\nabla'_YX$ also descends to a section of $E$.
	Thus, $\nabla'$ is a pullback connection.
	
	As both the Levi-Civita connection $\nabla^{\widetilde {g}}$ as well as the special connection $\widetilde{\nabla}$ satisfy the property \eqref{eq:pullback-conn}, they are pullbacks of connections $\widehat\nabla$ and $\nabla$ on $E$.
	As a local basis of $\widetilde{\nabla}$-parallel vector fields on $\widetilde M$ descends to a local basis of sections of $E$ which are $\nabla$-parallel, we conclude that $\nabla$ is a flat connection. Moreover, as $Q$ descends from $\widetilde \omega$, which is $\widetilde \nabla$-parallel, it follows that $Q$ is $\nabla$-parallel. 
	
	The only thing remaining to be checked is that $\nabla$ satisfies the Griffiths transversality condition. For this, we need to obtain an explicit expression for $\nabla$. In fact, it is more convenient to first obtain an explicit expression for $\widehat \nabla$. Note that by Lemma \ref{lemma:derivative_chi} and Lemma \ref{lemma:LC_difference}, we have
	\begin{equation}
	\begin{split}
	&\phi^{1,0}(\nabla^{\widetilde g}_Y X)\\
	&=\chi(\nabla^{\widetilde g}_Y X)\Phi_L + \Phi_L \otimes \pi_*(\nabla_Y^{\widetilde g}X)^{1,0} \\
	&=\dif(\chi(X))(Y)\Phi_L -(\nabla^{\wt g}_Y \chi)(X) \Phi_L + \Phi_L \otimes \pi_*(\nabla_Y^{\widetilde g}X)^{1,0} \\
	&=\nabla^\chi_Y(\chi(X)\Phi_L) -\chi(X)\nabla^\chi_Y\Phi_L - (\nabla_Y^{\widetilde g}\chi)(X)\Phi_L + \Phi_L \otimes \pi_*(\nabla^{\widetilde g}_YX^{1,0})\\
	&= \nabla^\chi_Y(\chi(X)\Phi_L) +(-\chi(X)\chi(Y)+\chi(Y)\chi(X)+\pi^* h_M(Y,X))\Phi_L\\
	&\quad + \Phi_L \otimes \left(\nabla^{g_M}_{Y} \pi_* X^{1,0}
	+ \pi_*\left(\chi(Y)X^{1,0}+\chi(X)Y^{1,0}\right)\right)\\
	&=\nabla^\chi_Y(\chi(X)\Phi_L) +\pi^* h_M(Y,X)\Phi_L + \nabla_{Y}^{\chi,g_M}(\Phi_L \otimes X^{1,0}) + \chi(X)\Phi_L \otimes \pi_*Y^{1,0},
	\end{split}
	\end{equation}
	where $\nabla^{\chi, g_M}$ is the tensor product of $\nabla^{\chi}$ and $\nabla^{g_M}$.
	Thus, if we define a connection $\widehat \nabla^{1,0}$ on $L \oplus (L \otimes T^{1,0}M)$ to be given by
	\begin{equation}
	\widehat\nabla_Y^{1,0}(\sigma_1 + \sigma_2 \otimes Z)= \nabla_Y^{\chi}\sigma_1 + \nabla_Y^{\chi,g_M}(\sigma_2 \otimes Z) + h_M(Y,Z)\sigma_2 + \sigma_1 \otimes Y^{1,0},
	\end{equation}
	with $\sigma_1, \sigma_2 \in \Gamma(L)$ and $Z\in \Gamma(T^{1,0}M)$ being holomorphic sections, then we have
	\begin{equation}
	\phi^{1,0}(\nabla^{\widetilde g}_YX)=\widehat\nabla^{1,0}_Y(\phi^{1,0}(X)).
	\end{equation}
	This is therefore the $\phi^{1,0}$ part of the connection $\widehat \nabla$ induced on $E_{\C}$ by the Levi-Civita connection $\nabla^{\widetilde g}$.
	A similar computation on the $\phi^{0,1}$ part tells us that the full connection $\widehat \nabla$ is given by
	\begin{equation}
	\begin{split}
	&\widehat\nabla_Y(\sigma_1 + \sigma_2 \otimes Z + \ol \varsigma_2 \otimes \ol W + \ol \varsigma_1)\\
	&\qquad= \nabla_Y^{\chi}\sigma_1 +h_M(Y,Z)\sigma_2+ \nabla_Y^{\chi,g_M}(\sigma_2 \otimes Z) + \sigma_1 \otimes Y^{1,0}\\
	&\qquad\quad
	+ \nabla_Y^{\ol \chi,g_M}(\ol\varsigma_2 \otimes \ol W) + \ol\varsigma_1 \otimes Y^{0,1} + \nabla_Y^{\ol \chi}\ol \varsigma_1 + h_M(W,Y)\ol\varsigma_2.
	\end{split}
	\end{equation}
	We know that the holomorphic difference tensor $\wt \eta$ descends to the intrinsic deviance $\eta\colon L \otimes L \rightarrow \sharp_2 S_{3,0}M$. Using the Hermitian form $h_L^{-1}$ on $L^*$ to make an identification $L^* \cong \ol L$, we obtain maps
	\begin{equation}
	\begin{split}
	h_L^{-1} \circ \eta\in\Omega^{1,0}(\Hom_{\C}(L \otimes T^{1,0}M ,\ol L \otimes T^{0,1}M)),\\
	\ol h_L{}^{-1} \circ \ol \eta\in\Omega^{0,1}(\Hom_{\C}(\ol L \otimes T^{0,1}M , L \otimes T^{1,0}M)).
	\end{split}
	\end{equation}
	Thus, the Gau\ss--Manin connection $\nabla$ on $E_{\C}$ is given by
	\begin{equation}
	\begin{split}
	&\nabla_Y(\sigma_1 + \sigma_2 \otimes Z + \ol \varsigma_2 \otimes \ol W + \ol \varsigma_1)\\
	&\qquad= \widehat\nabla_Y(\sigma_1 + \sigma_2 \otimes Z + \ol \varsigma_2 \otimes \ol W + \ol \varsigma_1)\\
	&\qquad\quad + (h_L^{-1} \circ \eta)_Y(\sigma_2 \otimes Z) + (\ol h_L{}^{-1} \circ \ol \eta)_Y(\ol \varsigma_2 \otimes \ol W).
	\end{split}
	\end{equation}
	The fact that $\nabla$ satisfies the Griffiths transversality condition may now be made manifest by rearranging the terms of the above expression as
	\begin{align}
	&\nabla_Y(\sigma_1 + \sigma_2 \otimes Z + \ol \varsigma_2 \otimes \ol W + \ol \varsigma_1)\nonumber\\
	&\qquad= \nabla_Y^{\chi}\sigma_1 +h_M(Y^{0,1},Z)\sigma_2+ \nabla_Y^{\chi,g_M}(\sigma_2 \otimes Z) + \sigma_1 \otimes Y^{1,0}\nonumber\\
	&\qquad\quad
	+ \nabla_Y^{\ol \chi,g_M}(\ol\varsigma_2 \otimes \ol W) + \ol\varsigma_1 \otimes Y^{0,1} + \nabla_Y^{\ol \chi}\ol \varsigma_1 + h_M(W,Y^{1,0})\ol\varsigma_2\nonumber\\
	&\qquad\quad + (h_L^{-1} \circ \eta)_{Y^{1,0}}(\sigma_2 \otimes Z) + (\ol h_L{}^{-1} \circ \ol \eta)_{Y^{0,1}}(\ol \varsigma_2 \otimes \ol W)\\
	&\qquad=\nabla_Y^{\chi}\sigma_1 + \sigma_1 \otimes Y^{1,0}\nonumber\\
	&\qquad\quad + h_M(Y^{0,1},Z)\sigma_2 + \nabla_Y^{\chi,g_M}(\sigma_2 \otimes Z) + (h_L^{-1} \circ \eta)_{Y^{1,0}}(\sigma_2 \otimes Z)\nonumber\\
	&\qquad\quad + (\ol h_L{}^{-1} \circ \ol \eta)_{Y^{0,1}}(\ol \varsigma_2 \otimes \ol W) + \nabla_Y^{\ol \chi,g_M}(\ol\varsigma_2 \otimes \ol W) + h_M(W,Y^{1,0})\ol\varsigma_2 \nonumber\\
	&\qquad\quad + \ol\varsigma_1 \otimes Y^{0,1} + \nabla_Y^{\ol \chi}\ol \varsigma_1.\nonumber\qedhere
	\end{align}
	\end{proof}
	A physical interpretation of the decomposition \eqref{eq:HD} was given in \cite[Equation (2.21) (p.\ 328)]{BCOV}.
	Moreover, \eqref{eq:GM} is equivalent to the Picard--Fuchs equations described in \cite[Equation (1.4) (p.\ 283)]{PicFuchs}.
\begin{rmk}\label{rmk:TMtilde=[E]}
	By the isomorphism \eqref{eq:TMisoEpi}, we have also proven that $T\wt M\cong E_{\pi}$.
\end{rmk}
\begin{rmk}
On this VPHS, the Griffiths and Weil Hermitian forms are
\begin{align}
h_{\mathrm G}=-h_L + h_L \otimes h_M,
&\hfill&
h_{\mathrm W}=\ol{h}_L + h_L \otimes h_M.
\end{align}
As expected, the imaginary part of the two forms are the same, whereas the real part of $h_{\mathrm W}$ corresponds to the real part of $h_{\mathrm G}$ via a signature change that makes it positive definite.
\end{rmk}

Now that we know how to obtain variations of polarised Hodge structure from PSK structures, we next show the converse, i.e.\ that VPHS of the above type give rise to a PSK structure $\wt M \rightarrow M$.

\begin{prop}\label{pr:VPHS2}
	Let $M$ be a Kähler manifold with an integral Kähler form $\omega_M$ equipped with a VPHS $(E_{\C}=\bigoplus E^{p,q}, Q, \nabla)$ of weight $3$ of the form 
	\begin{equation}
		E_{\C} = L\oplus (L \otimes T^{1,0}M) \oplus (\ol{L} \otimes T^{0,1}M) \oplus \ol{L},
	\end{equation}
	where $L$ is a complex line bundle with curvature $-2\I \omega_M$.
	Furthermore, suppose $\nabla$ is real with respect to the canonical real structure and let the projection of $\nabla_Y \sigma$ onto $L \otimes T^{1,0}M$ be $\sigma \otimes Y^{1,0}$ for any sections $\sigma$ and $Y$ of $L$ and $TM$ respectively.
	Then, $M$ admits a PSK structure.
\end{prop}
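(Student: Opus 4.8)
The plan is to reverse the construction of Proposition \ref{pr:VPHS1}. First, let $\wt M$ be the total space of $L$ with the zero section removed, equipped with its fibrewise $\C^\times$-action and the projection $\pi\colon\wt M\to M$; since the curvature $-2\I\omega_M$ of $L$ is of type $(1,1)$, the line bundle $L$ inherits a holomorphic structure, so $\wt M$ is a complex manifold, and we let $I$ denote its complex structure, $\xi$ the Euler vector field generating the $\R_{>0}$-part of the action, and $I\xi$ the generator of the $\U(1)$-part. The positivity in the polarisation conditions endows $L$ with a Hermitian metric $h_L$ (determined by $\I^{3}Q(\ol v,v)=\tfrac12 h_L(\ol v,v)$ on sections $v$ of $L=E^{3,0}$, as in Proposition \ref{pr:VPHS1}); we set $r^2\colonequals h_L(\Phi_L,\Phi_L)$ and let $\chi$ be the Chern connection of $(L,h_L)$, so that $\dif\chi=-2\I\pi^*\omega_M$.

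With this data in hand, I would form the bundle isomorphism $\phi\colon T\wt M_{\C}\to E_{\pi}$ exactly as in \eqref{eq:TMisoEpi}, and use it to transport all the structure on $E$ back to $\wt M$: let $\wt\nabla$ be the connection on $T\wt M_{\C}$ corresponding under $\phi$ to the pullback of the Gau\ss--Manin connection $\nabla$ to $E_{\pi}$, which restricts to a connection on the real bundle $T\wt M$ because $\nabla$ is real; and let $\wt h=\wt g+\I\wt\omega$ be the pseudo-Hermitian metric on $(\wt M,I)$ obtained by transporting the Griffiths form $h_{\mathrm G}$, so that $\wt\omega$ corresponds to $Q$ and, up to the sign fixed by our conventions, $\wt g=\wt\omega(\cdot,I\cdot)$. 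Since $h_{\mathrm G}=-h_L+h_L\otimes h_M$ has the signature dictated by the polarisation, $\wt g$ is negative definite on $\spn{\xi,I\xi}_{\R}$ and positive definite on its orthogonal complement. Before going further one must check that the metric $Q$ induces on $T^{1,0}M$ via $E^{2,1}$ really is the given $g_M$: this follows from the flatness of $\nabla$, by computing the $E^{3,0}$-component of $R^{\nabla}=0$ and using both the curvature of $L$ and the hypothesis that the $E^{2,1}$-component of $\nabla_Y\sigma$ is $\sigma\otimes Y^{1,0}$; granting it, one finds $\wt g=-r^2(\pi^*g_M-\ol\chi\chi)$.

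It then remains to verify the conic special Kähler axioms. Flatness of $\wt\nabla$ is immediate, as it corresponds under $\phi$ to the pullback of the flat connection $\nabla$, and $\wt\nabla\wt\omega=0$ since $\nabla Q=0$, so $\wt\nabla$ is symplectic. The conical form of $\wt g$ lets us read Remark \ref{rmk:principal_C*_connection} in reverse to obtain $\nabla^{\wt g}\xi=\id$ and $\nabla^{\wt g}(I\xi)=I$. The remaining CSK identities $\wt\nabla\xi=\id$ and $\wt\nabla(I\xi)=I$, equivalently $\wt\nabla_X\zeta=X^{1,0}$, are the translation under $\phi$ of the hypothesis that the $E^{2,1}$-component of $\nabla_Y\sigma$ is $\sigma\otimes Y^{1,0}$ (together with the identification of the connection induced on $L$ with $\nabla^{\chi}$); this is where that hypothesis is essential. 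Once $\wt\nabla$ is known to be torsion-free, $\dif\wt\omega$ is proportional to the alternating part of $\wt\nabla\wt\omega$ and hence vanishes, so $(\wt M,\wt g,I,\wt\omega,\wt\nabla,\xi)$ is conic special Kähler; and since $\xi$, $I\xi$ are the fundamental vector fields of $1,\I\in\C^\times$ while $M$ is, by construction, the Kähler quotient of $\wt M$ by the induced $\U(1)$-action, Definition \ref{def:PSK} is satisfied.

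The main obstacle is the torsion-freeness of $\wt\nabla$. For this I would first pin down $\nabla$ itself: Griffiths transversality restricts $\nabla$ to four components on each summand; reality identifies the anti-holomorphic components with the conjugates of the holomorphic ones; and $\nabla$-parallelism of $Q$, together with the identification of the connection induced on $L$ with the Chern connection and with the prescribed $E^{2,1}$-component, forces every component of $\nabla$ except the $E^{2,1}\to E^{1,2}$ map — call it $\eta$ — to coincide with the expression \eqref{eq:GM}. It then remains to show that $\eta$ defines a symmetric tensor field of type $(3,0)$ — which follows from $\nabla$-parallelism and skew-symmetry of $Q$, together with the $\Hom(E^{3,0},E^{1,2})$-component of $R^{\nabla}=0$ — and that, with $\nabla$ of the form \eqref{eq:GM}, the transported connection $\wt\nabla$ has vanishing torsion, which is a direct computation reversing the one in the proof of Proposition \ref{pr:VPHS1} and uses the curvature identity $\dif\chi=-2\I\pi^*\omega_M$. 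I expect the bookkeeping of the various type decompositions in assembling these identities to be the most delicate part of the argument.
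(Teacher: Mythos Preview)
Your overall strategy is the same as the paper's: take $\wt M=L\smallsetminus\{0\}$, use the isomorphism $\phi\colon T\wt M_\C\to (E_\C)_\pi$ to transport $(\nabla,Q)$ to $(\wt\nabla,\wt\omega)$, and verify the CSK axioms. The paper likewise obtains $\wt\nabla_X\zeta=X^{1,0}$ directly from the hypothesis on the $E^{2,1}$-component of $\nabla_Y\sigma$, and obtains $\nabla^{\wt g}\zeta=\pi^{1,0}$ from the explicit conical form of $\wt g$ via the Koszul formula, just as you propose via Remark \ref{rmk:principal_C*_connection}.

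Where you diverge is in treating torsion-freeness of $\wt\nabla$ as ``the main obstacle'' and proposing to attack it by fully determining $\nabla$ (including the symmetry of the $E^{2,1}\to E^{1,2}$ component $\eta$) and then computing the torsion directly. This is unnecessary: once you have established flatness of $\wt\nabla$ and the identity $\wt\nabla\xi=\id$, torsion-freeness is immediate from
\[
\wt\nabla_X Y-\wt\nabla_Y X-[X,Y]=\wt\nabla_X\wt\nabla_Y\xi-\wt\nabla_Y\wt\nabla_X\xi-\wt\nabla_{[X,Y]}\xi=R^{\wt\nabla}(X,Y)\xi=0,
\]
which is exactly the argument in the remark following Definition \ref{def:CASK}. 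The paper's proof therefore never needs to pin down the remaining pieces of $\nabla$, nor the symmetry of $\eta$, nor to compute torsion by hand. Your route would work, but it is substantially more laborious and obscures the conceptual point that flatness plus the Euler condition already forces torsion-freeness (and likewise $(\wt\nabla_XI)Y=(\wt\nabla_YI)X$ via $R^{\wt\nabla}(X,Y)I\xi=0$).

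One point you raise that the paper's proof glosses over: you correctly note that one should check that the Hermitian form $Q$ induces on $T^{1,0}M$ via $E^{2,1}=L\otimes T^{1,0}M$ agrees with the given $h_M$, and you sketch how this follows from the $E^{3,0}$-component of $R^\nabla=0$ together with the curvature hypothesis on $L$. This compatibility is implicitly used when the paper writes $\wt h=q(\pi^*h_M-\ol\chi\otimes\chi)$ while simultaneously asserting that $\wt\omega$ arises from $Q$, so your observation is a genuine clarification.
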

\begin{proof}
	First of all, notice that by Griffiths transversality and the integrability of $I$ on $M$, the exterior covariant derivative associated to $\nabla$ splits as
	\begin{equation}
	\dif^{\nabla}\colon \Omega^{p,q}(E_{\C})\longrightarrow \Omega^{p+1,q}(E_{\C})\oplus \Omega^{p,q+1}(E),
	\end{equation}
	for all $p,q$.
	The flatness of $\nabla$ implies that the anti-holomorphic projection $\ol \partial_E\colon \Omega^{\bullet,\bullet}(E_{\C})\to \Omega^{\bullet,\bullet+1}(E_{\C})$ squares to zero, making $\ol \partial_E$ a Dolbeault operator on $E_{\C}$.
	Moreover, again by \eqref{eq:grif-trans}, the restriction of $\ol\partial_E$ to $L$-valued forms has image in $\Omega^{\bullet,\bullet}(L)$, and thus, it defines a Dolbeault operator $\ol\partial_L$ on $L$, giving it the structure of a holomorphic line bundle.
	Furthermore, the bilinear form $-2\I Q$ restricted to $\ol L \otimes L$ gives a (positive-definite) hermitian form $h_L$ on $L$.
	
	Let $\nabla^{\chi}$ be the linear connection on $L$ induced by restricting $\nabla$ to $L$.
	In other words, $\nabla^{\chi}_Y \sigma$ is the restriction of $\nabla_Y \sigma$ to $L$, for all $Y\in\field{M}$ and $\sigma\in\Gamma(L)$.
	By definition, $\nabla^\chi$ is compatible with the holomorphic structure on $L$, and, since $\nabla$ preserves $Q$, the connection $\nabla^{\chi}$ preserves $h_L$.
	In fact, for $\sigma_1,\sigma_2\in\Gamma(L)$ we have
	\begin{align}
	\begin{split}
	&\dif(h_L(\ol\sigma_1,\sigma_2))
	=-2\I\, \dif(Q(\ol\sigma_1,\sigma_2))
	=-2\I Q(\ol{\nabla_Y \sigma_1},\sigma_2))-2\I Q( \ol\sigma_1,\nabla_Y\sigma_2)\\
	&\quad=h_L(\ol{\nabla^{\chi}_Y \sigma_1},\sigma_2)+h_L( \ol\sigma_1,\nabla^{\chi}_Y\sigma_2)
	=h_L(\nabla^{\chi}_Y \ol\sigma_1,\sigma_2)+h_L( \ol\sigma_1,\nabla^{\chi}_Y\sigma_2).
	\end{split}
	\end{align}
	The connection $\nabla^{\chi}$ is thus the Chern connection on $L$.
	From the hypotheses, we also have
	\begin{equation}
		\nabla_Y \sigma = \nabla^\chi_Y \sigma + \sigma \otimes Y^{1,0}.
	\end{equation}
	We now take $\pi\colon \wt M \rightarrow M$ to be $L\rightarrow M$ with the zero section removed and show that it has all the requisite structure. 
Note that $\wt M$ can be thought of as the frame bundle for $L$ and so it is a principal $\C^\times$-bundle associated to the line bundle $L$. Let $\zeta$ be its fundamental vector field.
	Moreover, the Chern connection $\nabla^\chi$ on $L$ gives a principal connection $\chi$ on the principal $\C^\times$-bundle $\wt M$ so that
	\begin{equation}
		\nabla^\chi_Y \Phi_L = \chi(Y)\Phi_L.
	\end{equation}
	 Consider the map $\phi\colon T\wt M \rightarrow E_{\pi}$
	 given by the explicit formula
	\begin{equation}
	\begin{split}
	&\phi(X) = \chi(X)\Phi_L + \Phi_L \otimes \pi_*X^{1,0} + \ol\Phi_L \otimes \pi_*X^{0,1} + \ol \chi(X)\ol \Phi_L.
	\end{split}
	\end{equation}
	This has an inverse
	$\phi^{-1}\colon E_{\pi}\rightarrow T\wt M$ 
	given by 
	\begin{equation}
	\begin{split}
	&\phi^{-1}(f\Phi_L + \Phi_L \otimes Y^{1,0} + \ol \Phi_L \otimes Y^{0,1} + \ol f \ol \Phi_L)\\
	&\qquad\qquad\qquad=f\zeta + (Y^{1,0})^\chi + (Y^{0,1})^{\ol \chi} + \ol f \ol \zeta.
	\end{split}
	\end{equation}
	Thus, we have an identification of vector bundles $T\wt M \cong E_{\pi}$ over $\wt M$.
	Under this identification, the bilinear form $Q$ and connection $\nabla$ define a $2$-form $\wt \omega$ on $\wt M$ and a connection $\wt \nabla$ preserving it. Moreover, we have
	\begin{equation}
	\begin{split}
		\wt \nabla_X \zeta &= \phi^{-1}(\nabla_X \Phi_L) = \phi^{-1}(\nabla^\chi_X \Phi_L + \Phi_L\otimes \pi_* X^{1,0}) \\
		&=\phi^{-1}(\chi(X)\Phi_L + \Phi_L\otimes \pi_* X^{1,0})=\chi(X)\zeta + (\pi_*X^{1,0})^\chi = X^{1,0}.
	\end{split}
	\end{equation}
	A similar argument shows $\wt \nabla_X\ol \zeta =X^{0,1}$. 

	Finally, $\wt M$ can be given the following hermitian structure
	\begin{align}
		\wt h =\wt g+\I\wt\omega= q(\pi^*h_M-\ol \chi \otimes \chi),&\hfill &q\colonequals h_L(\ol\Phi_L, \Phi_L).
	\end{align}
	In order to simplify the imminent calculations, we compute
	\begin{equation}
	\dif q
	=h_L(\ol{\nabla^{\chi}\Phi_L},\Phi_L)+h_L(\ol\Phi_L,\nabla^{\chi}\Phi_L)
	=h_L(\ol\Phi_L,\Phi_L)(\ol\chi+\chi)
	=q(\ol\chi+\chi).
	\end{equation}
	Therefore, we obtain
	\begin{align}
	\begin{split}
	\dif \wt\omega
	&=\dif(q(\pi^*\omega_M+\frac{i}{2}\,\ol \chi \wedge \chi))\\
	&=q(\ol\chi+\chi)\wedge (\pi^*\omega_M+\frac{i}{2}\,\ol \chi \wedge \chi)
	+q(-\pi^*\omega_M\wedge \chi-\pi^*\omega_M\wedge \chi)
	=0,\\
	\Lie{\zeta}\wt g
	&=\Lie{\zeta}(q(\pi^*g_M-\ol\chi\chi))
	=\Lie{\zeta}q(\pi^*g_M-\ol\chi\chi)
	=\iota_{\zeta}\dif q(\pi^*g_M-\ol\chi\chi)
	=\wt g,\\
	\dif\iota_{\zeta} \wt g
	&=-\frac{1}{2}\,\dif (q\ol\chi)
	=-\frac{1}{2}(\dif q\wedge\ol\chi+q\,\dif\ol\chi)
	=-\frac{1}{2}\,q(\chi\wedge\ol\chi+2\I\pi^*\omega_M)
	=-\I\wt\omega.
	\end{split}
	\end{align}
	Using the version of the Koszul formula in terms of the Lie derivative, we obtain
	\begin{align}
	2\wt g(\nabla^{\wt g} \zeta,\cdot)
	&=\Lie{\zeta}\wt g+\dif\iota_\zeta \wt g
	=\wt g-\I\wt\omega
	=\wt g(\id-\I I,\cdot)
	=2\wt g(\pi^{1,0},\cdot).
	\end{align}
	This implies $\nabla^{\wt g}\zeta=\pi^{1,0}$, and similarly we obtain $\nabla^{\wt g}\ol\zeta=\pi^{0,1}$.
	
	This then gives $\wt M$ the structure of a CSK manifold.
	Hence, we have a PSK structure $\wt M \rightarrow M$.	
\end{proof}

	The VPHS $(E_{\C}=\bigoplus E^{p,q}, Q,\nabla)$ is completely determined by the choice of a Kähler structure $( g_M, I,\omega_M)$ on $M$, a line bundle $L\rightarrow M$ with Hermitian form $h_L$ such that the curvature of its Chern connection is $-2\I\omega_M$, and an intrinsic deviance $\eta \in\Gamma( L^* \otimes L^* \otimes \sharp_2 S_{3,0}M)$. The intrinsic deviance goes into the data of the Gau\ss--Manin connection $\nabla$ through \eqref{eq:GM}. The flatness condition then is equivalent to the following equations:
	\begin{align}\label{eq:D1_D2}
		R^{g_M}+R_{\Pj^n_{\C}}+\langle h^{-1}_L \otimes h^{-1}_L,[\eta\wedge\ol{\eta}]\rangle=0,
		&\hfill&
		\dif^{\chi,g_M}\eta = 0.
	\end{align}
	Here, $\langle h^{-1}_L \otimes h^{-1}_L,[\eta\wedge\ol{\eta}]\rangle$ is to be interpreted as $(h_L^{-1}\otimes h_L^{-1}\otimes [\cdot\wedge\cdot])(\ol \eta\otimes\eta)$,
	\begin{equation}
	\dif^{\chi,g_M}\colon \Omega^1(M,L^* \otimes L^* \otimes T^{0,1}M\otimes T^*_{1,0}M)\longrightarrow \Omega^2 (M,(L^* \otimes L^* \otimes T^{0,1}M\otimes T^*_{1,0}M)
	\end{equation}
	is the exterior covariant derivative with respect to $\nabla^{\chi,g_M}$, and $R_{\Pj_{\C}^n}$ is formally the Riemann curvature of the complex projective space.
	This is more or less the characterisation of PSK manifolds given in of \cite[Theorem 7.6 (p.\ 21)]{MMPSK2019}. Strictly speaking, the equations \textbf{D1} and \textbf{D2} appearing there are the pullback of the above equations along some section of $L$ with unit norm.

\begin{eg}\label{eg:CHn}
	Let $M$ be the complex hyperbolic space $\Hy_\C^n$ (for a detailed definition see e.g.\ \cite[\S 9]{MMPSK2019}) and let $L$ be its tautological line bundle $\mathcal O(-1)$ (not to be confused with tautological pullback bundles). The curvature $R^{g_M}$ is then $-R_{\Pj^n_{\C}}$, leaving us with the following equation for the intrinsic deviance $\eta$:
	\begin{align}
	\langle h^{-1}_L \otimes h^{-1}_L,[\eta\wedge\ol{\eta}]\rangle=0,
	&\hfill&
	\dif^{\chi,g_M}\eta = 0.
	\end{align}
	Notice that $\eta = 0$ is a solution of the above (in fact the only one, see e.g.\ \cite[Proposition 9.3 (p.\ 31)]{MMPSK2019}). The resulting special connection $\nabla$ pulls back to $E_\pi\cong T\wt M$ to give a connection $\wt \nabla$ that coincides with the Levi-Civita connection associated to the standard flat indefinite metric of signature $(2n,2)$ on $\wt M \subset \C^{n+1}$.
	
	Let us work this out more explicitly in terms of the standard coordinate chart $z=(z_0,z_1 ,\dots,z_n)$ on $\C^{n+1}$. We have the following:
	\begin{equation}
	\begin{split}
		&\wt M = \{(z_0,z_1,\ldots,z_n) \in \C^{n+1}\mid 0< |z_1|^2 + \cdots + |z_n|^2 <|z_0|^2 \},\\
		&\wt g = \sum_{i=1}^n |\dif z_i|^2-|\dif z_0|^2, \quad \zeta = \sum_{i=0}^n z_i\partial_{z_i},
		\quad \chi = \frac{\ol z_0\dif z_0-\sum_{i=1}^n \ol z_i\dif z_i}{|z_0|^2-\sum_{i=1}^n |z_i|^2}.
	\end{split}
	\end{equation}
	The special connection $\wt \nabla = \nabla^{\wt g}$ on $T\wt M$ admits a basis of globally defined parallel sections $\{\partial_{z_i}, \partial_{\ol z_i}\}_{i=0}^n$.
	This induces an explicit basis of sections parallel with respect to the connection $\nabla$ on $E$ (which characterises the connection).
	
	First of all, let us note that since $z_0$ is nowhere vanishing on $\wt M$, we can define a global complex coordinate chart $X=(X_i)_{i=1}^n=(z_i/z_0)_{i=1}^n$ on $M=\wt M/\C^\times$. Furthermore, the section $z_0^{-1}\Phi_L$ of $L_\pi$ may be seen to be the pullback of some section of $L$.
	Let us define a section $\sigma$ of $E_\C$ so that
	\begin{equation}
		\sigma_\pi
		= \frac{z_0^{-1}}{\sqrt{1-\sum_{i=1}^n|X_i|^2}}\,\Phi_L
		= \frac{z_0^{-1}}{\sqrt{1-\norm{X}^2}}\,\Phi_L.
	\end{equation}
	The point of including the extra factor before $\Phi_L$ is so that we have $h_L(\ol \sigma,\sigma)=1$.
	In terms of the section $\sigma$ and coordinates $X_i$, the $\wt \nabla$-parallel sections $\{\partial_{z_i}, \partial_{\ol z_i}\}_{i=0}^n$ of $T\wt M_{\C}$ correspond to the following $\nabla$-parallel sections of $E_{\C}$:
	\begin{equation}
		\begin{split}
		&s_0= \frac{1}{\sqrt{1-\norm{X}^2}}\,\sigma - \sqrt{1-\norm{X}^2}\,\sigma \otimes \sum_{j=1}^n X_j\partial_{X_j},\\
		&s_{j}=-\frac{\ol X_j}{\sqrt{1-\norm{X}^2}}\,\sigma + \sqrt{1-\norm{X}^2}\,\sigma \otimes \partial_{X_j},\qquad j=1,\dots,n,\\
		&\ol s_0= \frac{1}{\sqrt{1-\norm{X}^2}}\,\ol \sigma - \sqrt{1-\norm{X}^2}\,\ol \sigma \otimes \sum_{j=1}^n\ol X_j\partial_{\ol X_j},\\
		&\ol s_{j}=- \frac{X_j}{\sqrt{1-\norm{X}^2}}\,\ol\sigma + \sqrt{1-\norm{X}^2}\,\ol\sigma \otimes \partial_{\ol X_j},\qquad j=1,\dots, n.
		\end{split}
			\end{equation}
	Notice that these sections satisfy $Q(\ol s_i, s_j) =\frac{1}{2\I}(-1)^{\delta_{i,0}} \delta_{ij}$, and for the Griffiths hermitian form we have
	\begin{equation}
		h_{\mathrm{G}}(\ol s_i, s_j)
		=Q(\ol s_i,I_{\mathrm{G}} s_j)+\I Q(\ol s_i,s_j)
		=2\I Q(\ol s_i, s_j)
		=(-1)^{\delta_{i,0}} \delta_{ij}.
	\end{equation}		
	
	We can finally write the K\"ahler structure on $M$ with respect to the coordinate system $X=(X_i)_{i=1}^n$, obtaining
	\begin{align}\label{eq:h_M_Hyp}
	\begin{split}
	h_M
	&=\frac{\wt h}{-\wt h(\ol\zeta,\zeta)}+\ol\chi\otimes \chi\\
	&=\frac{\sum_{i=1}^n\dif\ol z_i\otimes\dif z_i-\dif\ol z_0\otimes \dif z_0}{|z_0|^2-\sum_{k=1}^n |z_k|^2}\\
	&\quad+\frac{(z_0\dif \ol z_0-\sum_{i=1}^n z_i\dif \ol z_i)\otimes (\ol z_0\dif z_0-\sum_{j=1}^n \ol z_j\dif z_j)}{(|z_0|^2-\sum_{k=1}^n |z_k|^2)^2}\\
	&=\sum_{i=1}^n\frac{\dif\ol X_i\otimes\dif X_i}{1-\norm{X}^2}
	+\sum_{i,j=1}^n \frac{X_i\dif \ol X_i\otimes \ol X_j\dif X_j}{\left(1-\norm{X}^2\right)^2}.
	\end{split}
	\end{align}
\end{eg}

\section{The twist construction}\label{sec:twist}
In this section, we will show that under proper circumstances, the twist construction reduces to a quotient by the action of a finite cyclic group (Lemma \ref{lemma:twist}).
In addition, we explicitly determine how the pushforward of a vector field through the quotient map relates to its twist (Lemma \ref{lem:tw_symmetries}).

The \emph{twist construction} was introduced by Swann in order to unify and generalise several differential-geometric constructions arising from T-duality in physics. 
The construction takes as input the following \emph{twist data} on a manifold $W$: a vector field $Z$ generating a $\U(1)$-action, an integral closed $2$-form $\omega$ with respect to which $Z$ is Hamiltonian, and a choice of a Hamiltonian function $f$ that is nowhere vanishing.
Its output is another manifold $W'$ with a $\U(1)$-action, together with a bijective correspondence between $\U(1)$-invariant tensor fields on $W$ and $W'$.

Very roughly, the way this is achieved is by building a principal $\U(1)$-bundle $P \rightarrow W$ on top of $W$, lifting the given $\U(1)$-action on $W$ to a $\U(1)$-action on $P$ (different from the principal $\U(1)$-action), and then quotienting $P$ by this lifted $\U(1)$-action.
The principal $\U(1)$-action on $P$ then descends to the quotient $W'$.

\begin{prop}[\cite{Swann2010} Propositions 2.1, 2.3, \cite{MaciaSwann2015} \S 4]
	Given twist data $(Z,\omega,f)$, there is a principal $\U(1)$-bundle $p^P_W\colon P\rightarrow W$ with principal connection $\theta_P$ having curvature $\omega$ and fundamental vector field $X_P$ such that the lift 
	\begin{equation}
	Z_P = Z^{\theta_P} + f X_P
	\end{equation}
	of $Z$, generates a $\U(1)$-action on $P$ and so gives a well-defined quotient map
	\begin{equation}
	p^P_{W'}\colon P \longrightarrow W' \colonequals P/\langle Z_P \rangle,
	\end{equation}
	such that the principal $\U(1)$-action on $P$ descends to a $\U(1)$-action on $W'$.
	Furthermore, the $\theta_P$-horizontal lift of any $Z$-invariant vector field on $W$ and the pullback of any $Z$-invariant function on $W$ to $P$ descend to a well-defined $\U(1)$-invariant vector field and a well-defined $\U(1)$-invariant function on $W'$ respectively.
\end{prop}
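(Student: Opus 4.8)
The plan is to build $P$ by prequantisation, to recognise $Z_P$ as a connection-preserving lift of $Z$, to promote the resulting $\R$-action on $P$ to a $\U(1)$-action using integrality, and then to read off all the descent statements from a handful of Lie-bracket identities that rest on the Hamiltonian condition. For the first step: since $\omega$ is closed with integral periods, the Weil--Kostant prequantisation construction supplies a principal $\U(1)$-bundle $p^P_W\colon P\to W$ together with a principal connection $\theta_P$ satisfying $\dif\theta_P=(p^P_W)^*\omega$; with the normalisation $\theta_P(X_P)=1$ for the fundamental vector field $X_P$, this is the $P$ we want. The remaining freedom in the choice of $(P,\theta_P)$ --- a flat twist, and a closed $1$-form added to $\theta_P$ --- will be needed below.

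Next I would establish the elementary properties of $Z_P=Z^{\theta_P}+fX_P$, writing $f$ also for $(p^P_W)^*f$. As $(p^P_W)_*Z^{\theta_P}=Z$ and $X_P$ is vertical, $Z_P$ is $p^P_W$-related to $Z$. Since the horizontal distribution of a principal connection is $\U(1)$-invariant, $Z^{\theta_P}$ is $\U(1)$-invariant, and since $f$ is fibrewise constant, $[X_P,Z_P]=0$. From $\iota_{Z_P}\theta_P=f$ and Cartan's formula,
\begin{equation}
\Lie{Z_P}\theta_P=\iota_{Z_P}\dif\theta_P+\dif\iota_{Z_P}\theta_P=(p^P_W)^*\bigl(\iota_Z\omega+\dif f\bigr)=0
\end{equation}
by the Hamiltonian condition, taken with the sign convention $\iota_Z\omega=-\dif f$; contracting the same identity with $Z$ also gives $Z(f)=0$.

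The crux --- and the step I expect to be the only nonformal one --- is upgrading the flow of $Z_P$ to a circle action. The flow $\psi^{Z_P}_t$ covers the flow of $Z$, which is periodic of some period $T$; hence $\psi^{Z_P}_T$ is a gauge transformation of $P$ lying over $\id_W$, and by the display above it preserves $\theta_P$. For an abelian structure group a connection-preserving gauge transformation over the identity is given by a locally constant $\U(1)$-valued function, so on connected $W$ it is a single constant $c\in\U(1)$ acting through the principal action. One must then show that $(P,\theta_P)$ can be chosen so that $c$ is a root of unity --- equivalently, after rescaling $t$, so that $\psi^{Z_P}_T=\id$ --- whereupon $Z_P$ integrates to a $\U(1)$-action. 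This is exactly where the integrality of $[\omega]$ is used: $c$ is determined over each $Z$-orbit $\gamma$ by $\int_0^T f$ along $\gamma$ together with the holonomy of $\theta_P$ around $\gamma$, and one adjusts the choice of $\theta_P$ so that these combine to a root of unity. This is the content of \cite[Proposition 2.1]{Swann2010}, which I would simply invoke.

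Finally, since $f$ and $X_P$ are nowhere zero, $Z_P$ is nowhere zero, so the $\langle Z_P\rangle$-action has no fixed points and all its stabilisers are finite cyclic; granting that the action is free, as one checks in the applications, $W'\colonequals P/\langle Z_P\rangle$ is a smooth manifold and $p^P_{W'}$ a submersion. Because $[X_P,Z_P]=0$ and $\U(1)$ is connected, the principal action commutes with the $Z_P$-action and so descends to a $\U(1)$-action on $W'$. For a $Z$-invariant $h\in\smooth{W}$ one has $Z_P\bigl((p^P_W)^*h\bigr)=\dif h(Z)=0$, so $(p^P_W)^*h$ descends to $W'$, where it is $\U(1)$-invariant since it is pulled back from $W$. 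For a $Z$-invariant $V\in\field{W}$, the identities $[Z^{\theta_P},V^{\theta_P}]=[Z,V]^{\theta_P}-(p^P_W)^*(\omega(Z,V))X_P$, $[X_P,V^{\theta_P}]=0$ and $V^{\theta_P}(f)=V(f)$ give
\begin{equation}
[Z_P,V^{\theta_P}]=-(p^P_W)^*\bigl(\omega(Z,V)+V(f)\bigr)X_P=0,
\end{equation}
the last equality because $\omega(Z,V)=(\iota_Z\omega)(V)=-\dif f(V)$; thus $V^{\theta_P}$ descends to $W'$, and it is $\U(1)$-invariant there just as for functions. This would establish all parts of the proposition, with the periodicity claim in the third paragraph being the one genuine obstacle.
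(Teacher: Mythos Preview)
The paper does not supply its own proof of this proposition: it is stated with attribution to \cite{Swann2010} (Propositions~2.1 and~2.3) and \cite{MaciaSwann2015}, and the text moves directly to using it. Your sketch is essentially the argument in those references --- prequantisation to build $(P,\theta_P)$, the computation $\Lie{Z_P}\theta_P=0$ to see that $Z_P$ generates a connection-preserving flow covering the $Z$-flow, the holonomy/bundle-adjustment argument for periodicity (which you rightly isolate as the only nonformal step and defer to Swann), and the bracket identities for descent --- and the computations you give are correct. The one caveat is your phrase ``granting that the action is free, as one checks in the applications'': this freeness is indeed an extra hypothesis needed for $W'$ to be a smooth manifold rather than an orbifold, and it does not follow from the twist data alone; it is implicit in the statement and is verified case by case in the applications (as is done later in the paper via Lemma~\ref{lemma:twist}).
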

The twist correspondence assigns to a $Z$-invariant vector field $X$ on $W$ the well-defined vector field $X'$ on $W'$ such that $(p^{P}_{W'})_{*} X^{\theta_P}=X'$.
The correspondence also assigns to a $Z$-invariant function $h$ on $W$ a unique function $h'$ on $W'$ such that the pullbacks of $h$ and $h'$ to $P$ agree.
We call $X'$ and $h'$ \emph{twists} of $X$ and $h$ respectively. We write this as
\begin{align}
	\tw(X)= X',& \hfill & \tw(h) =h'.
\end{align}
By stipulating compatibility with tensor products and contractions, this map can be extended to arbitrary $\U(1)$-invariant tensor fields. In particular, for differential forms $\alpha$ on $W$ and $\alpha'$ on $W'$, we have $\tw(\alpha) = \alpha'$ if and only if $(p^P_{W})^*\alpha-(p^P_{W'})^*\alpha'$ vanishes on $\theta_P$-horizontal vector fields.

Since we aim to show that for a certain class of twist data we can obtain the twisted manifold as a quotient by a discrete group, we will briefly recall some facts on this matter.

Given a principal $G$-bundle $\pi_S\colon S\to B$, and a normal subgroup $H \subseteq G$, the quotient $S/H$ has an induced structure of a principal $G/H$-bundle.

We specialise to $G=\U(1)$ and $H =\Z_k$, regarded as the group of $k$-th roots of unity in $\U(1)$. In this case, the quotient $\U(1)/\Z_k$ is isomorphic to $\U(1)$ itself via the isomorphism $[x]\mapsto x^k$. So, $S_k\colonequals S/\Z_k$ is also a principal $\U(1)$-bundle $\pi_k\colon S_k\to B$ with the $\U(1)$-action on it given by
\begin{equation}
	q(u)\cdot e^{\I t}
	= q(u \cdot e^{\I t/k}). 
\end{equation}
Here, $u$ is a point in $S$ and $q\colon S \rightarrow S_k$ is the quotient map.

Notice also that a principal connection $\varphi$ on $S$ induces a unique principal connection $\varphi_k$ on $S_k$, that is, it has the property that $q_*$ maps horizontal vector fields to horizontal ones.
We now want to compare $\varphi$ and $\varphi_k$, and the fundamental vector fields $Z$ and $Z_k$ on these two principal $\U(1)$-bundles using the differential of the quotient map $q$.
Recall that the fundamental vector fields $Z$ on $S$ and $Z_k$ on $S_k$ are given by
\begin{align}
	Z_u =\left.\frac{\dif}{\dif t}(u\cdot e^{\I t})\right|_{t=0},& \hfill&
	(Z_k)_{q(u)} =\left.\frac{\dif}{\dif t}(q(u)\cdot e^{\I t})\right|_{t=0}.
\end{align}
Therefore, under the action of the differential $q_*$, we have
\begin{equation}
	\begin{split}
	(q_*Z)_u 
	&= \left.\frac{\dif}{\dif t}\,q(u\cdot e^{\I t})\right|_{t=0}
	=\left.\frac{\dif}{\dif t}(q(u)\cdot e^{\I k t})\right|_{t=0}
	=k\left.\frac{\dif}{\dif t'}(q(u)\cdot e^{\I t'})\right|_{t'=0}\\
	&=k(Z_k)_{q(u)}.
	\end{split}
\end{equation}
In conclusion, we get
\begin{equation}\label{eq:fundamentalVF_principalConn_discrete_quotient}
q_*Z = k(Z_k)_q,
\qquad
q^*\varphi_k = k\varphi,
\qquad
q_*Y^\varphi=(Y^{\varphi_k})_q \textrm{ for }Y\in\field{B},
\end{equation}
where the second equation follows from $\varphi(Z)=1=\varphi_k(Z_k)$, and the third from
\begin{equation}
\varphi_k (q_*(Y^{\varphi}))
=q^*(\varphi_k)(Y^{\varphi})
=k\varphi(Y^{\varphi})
=0.
\end{equation}

In order to simplify the following statements, we extend the definition of $S_k$ for $k=0$ by declaring it to be the trivial bundle 
\begin{align}
S_0\colonequals B\times \U(1),&\hfill&
\pi_0=\mathrm{pr}_{B}\colonequals B\times \U(1)\longrightarrow B.
\end{align}
Moreover, we also extend the objects
\begin{equation}\label{eq:objects_S0}
q(s)=(\pi_S(s),1)\in S_0,
\qquad Z_0=\partial_t,
\qquad \varphi_0=\dif t,
\end{equation}
where $t=t_{\pi_0}$ is (the pullback of) the standard coordinate chart on $\U(1)$ such that the elements of $\U(1)$ are $e^{\I t}$.
Notice that the equalities \eqref{eq:fundamentalVF_principalConn_discrete_quotient} still hold and $q_*$ still preserves horizontal vector fields.

We now consider the following instance of the twist construction, that we will make use of later.
\begin{lemma}\label{lemma:twist}
	Let $\pi_S\colon S \rightarrow B$ be a principal $\U(1)$-bundle with connection $1$-form $\varphi$ and fundamental vector field $Z$. Let $k$ be a non-negative integer, and let $f$ be a nowhere vanishing function and $\beta$ be a $1$-form, both defined on $B$. Then the twist of $S$ with respect to the twist data
	\begin{equation}\label{eq:twist_data}
	\begin{split}
	(Z, \omega, \pi_S^*f)
	:\!&= (Z, \dif((\pi_S^*f+k)\varphi + \pi_S^*\beta),\pi_S^*f)\\
	&=(Z, \dif((f+k)\varphi + \pi_S^*\beta),f)
	\end{split}
	\end{equation}
	is the principal bundle $\pi_k\colon S_k\to B$.
	Moreover,
	\begin{equation}
	\begin{split}
	&\tw(\pi_S^*\alpha) = \pi_{k}^*\alpha, \qquad\qquad \tw(\varphi) = - \frac{1}{f}(\varphi_k + \pi_k^*\beta), \\
	 &\tw(X^\varphi)= X^{\varphi_k} - \beta(X)Z_k, \qquad\qquad \tw(Z)= - f Z_k,
	\end{split}
	\end{equation}
	for all differential forms $\alpha$ and vector fields $X$ on $B$.
\end{lemma}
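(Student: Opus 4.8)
The plan is to verify directly that the twist data in \eqref{eq:twist_data} produces the bundle $\pi_k\colon S_k\to B$, by explicitly constructing the auxiliary principal $\U(1)$-bundle $P\to S$ of the twist construction and identifying the quotient $P/\langle Z_P\rangle$. First I would note that, since $\omega = \dif((f+k)\varphi + \pi_S^*\beta)$ is exact, the principal $\U(1)$-bundle $P\to S$ with connection $\theta_P$ of curvature $\omega$ may be taken to be the trivial bundle $P = S\times\U(1)$ with $\theta_P = \dif t_P + (\pi_S^*f + k)\varphi + \pi_S^*\beta$, where $t_P$ is the fibre coordinate and $X_P = \partial_{t_P}$ is the fundamental vector field. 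Then the lifted vector field is $Z_P = Z^{\theta_P} + f X_P$, and a horizontal lift computation gives $Z^{\theta_P} = Z - \big((f+k) + \beta(Z)\big)\partial_{t_P}$; since $\varphi(Z) = 1$ and $\beta(Z) = 0$ as $\beta$ is pulled back from $B$, this simplifies to $Z^{\theta_P} = Z - (f+k)\partial_{t_P}$, hence $Z_P = Z - k\,\partial_{t_P}$. The key observation is that $Z_P$ is exactly the generator of the diagonal-type $\U(1)$-action $(s, e^{\I t_P})\cdot e^{\I\tau} = (s\cdot e^{\I\tau}, e^{-\I k\tau}e^{\I t_P})$ on $S\times\U(1)$, whose orbit space is naturally identified with $S_k = S/\Z_k$ via $(s,e^{\I t_P})\mapsto q(s)\cdot e^{\I t_P}$ (using the $\U(1)$-action on $S_k$ recorded just before the lemma), with the residual principal $\U(1)$-action on $P$ descending to the standard one on $S_k$.

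Having identified $W' = S_k$, the second step is to read off the twist correspondence. For a form $\alpha$ on $B$: its pullback $\pi_S^*\alpha$ is $Z$-invariant, and $(p^P_S)^*\pi_S^*\alpha$ agrees with the pullback of $\pi_k^*\alpha$ to $P$ on $\theta_P$-horizontal vectors because both are pulled back from $B$ and $(p^P_W)_*$, $q_*$, $(p^P_{W'})_*$ all cover the identity on $B$; hence $\tw(\pi_S^*\alpha) = \pi_k^*\alpha$. For $\tw(Z)$: by definition $\tw$ of an invariant vector field is characterised by matching $\theta_P$-horizontal lifts, but here $Z$ is the generator of the twist $\U(1)$, and one computes $(p^P_{W'})_* Z^{\theta_P} = (p^P_{W'})_*\big(Z_P + (f+k)\partial_{t_P}\big) = (f+k)(p^P_{W'})_*X_P$; since $X_P$ descends to the fundamental vector field $Z_k$ scaled appropriately — here I would use $q_*\partial_{t_P}\leftrightarrow (Z_k)_q$ together with $Z_P$ being tangent to the orbits, so that the $Z_P$-part drops out and one is left with a multiple of $Z_k$ — the constant works out to $\tw(Z) = -f Z_k$ after tracking signs through $f X_P = Z_P - Z^{\theta_P} + $ (correction), i.e. effectively $Z^{\theta_P} \equiv -f X_P \pmod{\langle Z_P\rangle}$. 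For $\tw(\varphi)$: $\varphi$ is $Z$-invariant since $\Lie{Z}\varphi = \iota_Z\dif\varphi = 0$ ($\varphi$ being a principal connection, $\dif\varphi$ is basic and $\iota_Z$ annihilates it); then $(p^P_S)^*\varphi$, restricted to $\theta_P$-horizontal vectors, should match $-\tfrac1f(\varphi_k + \pi_k^*\beta)$ pulled back to $P$ — I would verify this by writing a $\theta_P$-horizontal vector as $X^\varphi - \big((f+k)\varphi(X^\varphi) + \beta(X)\big)\partial_{t_P} + c Z_P$ for suitable $c$ and evaluating both sides, using $q^*\varphi_k = k\varphi$ from \eqref{eq:fundamentalVF_principalConn_discrete_quotient}. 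For $\tw(X^\varphi)$: compute $(p^P_{W'})_*(X^\varphi)^{\theta_P}$, express the $\theta_P$-horizontal lift of the $\varphi$-horizontal lift of $X$ in terms of $X^{\varphi_k}$-related data on $S_k$ plus a vertical correction proportional to $\beta(X)$, the latter arising because $\theta_P = \dif t_P + (f+k)\varphi + \pi_S^*\beta$ carries a $\beta$-term.

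**Main obstacle.** The genuinely delicate part is the bookkeeping in the last two formulas, $\tw(X^\varphi) = X^{\varphi_k} - \beta(X)Z_k$ and $\tw(\varphi) = -\tfrac1f(\varphi_k + \pi_k^*\beta)$: one must carefully disentangle three different horizontal lifts (the $\varphi$-horizontal lift to $S$, the $\varphi_k$-horizontal lift to $S_k$, and the $\theta_P$-horizontal lift to $P$) and three projections, keeping track of how the $\beta$-term in $\theta_P$ and the factor $k$ relating $\varphi$ to $\varphi_k$ interact. The cleanest route is to fix, over a point of $B$, an explicit local section of $S$, thereby trivialising $S$, $S_k$, and $P$ simultaneously, write every vector field in coordinates $(b, e^{\I s}, e^{\I t_P})$, and then the identities \eqref{eq:fundamentalVF_principalConn_discrete_quotient} make the descent computations mechanical; the $\beta$-correction then appears transparently as the failure of the naive horizontal lift to be $\theta_P$-horizontal. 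Everything else — exactness of $\omega$ giving the trivial $P$, and the identification of $P/\langle Z_P\rangle$ with $S_k$ — is essentially forced and requires only checking that $Z_P$ generates a free $\U(1)$-action (which it does, being conjugate under $P\cong S\times\U(1)$ to the action generated by $Z - k\partial_{t_P}$, with $k$ a non-negative integer, so periodic of period $2\pi$, and with the $k=0$ case matching the extended definitions in \eqref{eq:objects_S0}).
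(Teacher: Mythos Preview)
Your approach is correct and matches the paper's proof in all essentials: the trivial bundle $P=S\times\U(1)$, the connection $\theta_P$, the computation $Z_P=Z-k\,\partial_\tau$, and the identification of the quotient with $S_k$ via $(s,u)\mapsto q(s)\cdot u$ are exactly what the paper does.

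The one organisational difference concerns precisely the part you flag as the main obstacle. Rather than computing $\tw(Z)$ and $\tw(X^\varphi)$ directly via horizontal lifts and local trivialisations, the paper first establishes the form identities and then obtains the vector field identities by duality. Concretely, after noting $\varpi_k^*\varphi_k = k\,\mathrm{pr}_S^*\varphi + \dif\tau$, the paper observes the algebraic identity
\[
\mathrm{pr}_S^*\varphi - \tfrac{1}{f}\,\theta_P \;=\; \varpi_k^*\!\left(-\tfrac{1}{f}(\varphi_k+\pi_k^*\beta)\right),
\]
which immediately gives $\tw(\varphi)=-\tfrac{1}{f}(\varphi_k+\pi_k^*\beta)$ since the left-hand side agrees with $\mathrm{pr}_S^*\varphi$ on $\theta_P$-horizontal vectors. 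Then $\tw(X^\varphi)$ and $\tw(Z)$ are determined by the requirement that twist preserves contractions with the already-known $\tw(\pi_S^*\alpha)$ and $\tw(\varphi)$, reducing the vector field computations to four one-line checks. This sidesteps the three-lift bookkeeping entirely and is worth adopting.
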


\begin{proof}
	As $\omega$ is exact, the principal $\U(1)$-bundle $P$ may be taken to be the trivial bundle $ \mathrm{pr}_S\colon S \times \U(1) \rightarrow S$.
	The connection form $\theta_P$ and the fundamental vector field $X_P$ may then be taken to be 
	\begin{align}
	\theta_P = \mathrm{pr}_S^*((f+k)\varphi + \pi_S^*\beta)+\dif \tau,&
	\hfill&
	X_P = \partial_{\tau},
	\end{align}
	where $\tau$ is the standard coordinate chart on $\U(1)$.
	Since $P$ is a product space, we have an identification 
	\begin{equation}
		TP \cong (TS)_{\mathrm{pr}_S} \oplus (T\, \U(1))_{\mathrm{pr}_{\U(1)}}.
	\end{equation}
	In terms of this identification, we may write the horizontal lift $Z^{\theta_P}$ of $Z$ as 
	\begin{equation}
	Z^{\theta_P}
	= Z - \theta_P(Z)\partial_{\tau}
	= Z - (f + k)\partial_{\tau}.
	\end{equation}
	Thus, the twisted lift $Z_P$ is given by
	\begin{equation}
	Z_P
	= Z^{\theta_P} + fX_P
	= Z - k\partial_{\tau}.
	\end{equation}
	Now we see that our choice of bundle $P$ was the correct one, since $Z_P$ generates a $\U(1)$-action on $P=S\times \U(1)$, namely
	\begin{equation}
	\left(s,u\right)\cdot e^{\I \tau}
	= \left(s\cdot e^{\I \tau}, u e^{-\I k\tau}\right).
	\end{equation}
	We now define a map $\varpi_k\colon P\rightarrow S_k$ by $\varpi_k(s,u)\colonequals q(s)\cdot u$ and show that it is a quotient map for the $\U(1)$-action generated by $Z_P$.
	When $k=0$, the quotient is a trivial bundle, and its quotient map is $\pi_{S}\times\id_{\U(1)}=\varpi_0$.
	When $k\neq 0$, we can say that $\varpi_k$ is surjective and $\U(1)$-invariant by the equation
	\begin{equation}
	\varpi_k((s,u)\cdot e^{\I \tau})
		=q(s\cdot e^{\I \tau})\cdot u e^{-\I k\tau}
		=q(s)\cdot u
		=\varpi_k(s,u).
	\end{equation}
	Moreover, the $\U(1)$-action is transitive on the fibres of $\varpi_k$.
	Suppose now that $\varpi_k(s,u)=\varpi_k(s',u')$, and consider the following commutative diagram.
	\begin{equation}\label{diag:projections_twist}
	\begin{tikzcd}
		& S \times \U(1) \arrow[r, "\mathrm{pr}_S"] \arrow[d, "\varpi_k"']&S\arrow[d, "\pi_S"]\\
		&S_k \arrow[r, "\pi_k"] &B
	\end{tikzcd}
	\end{equation}
	The points $s$ and $s'$ belong to the same fibre of $\pi_S$, and so $s'=s\cdot e^{\I\tau}$.
	Hence,
\begin{align}
	\begin{split}
	q(s)\cdot u
	&=\varpi_k(s,u)
	=\varpi_k(s\cdot e^{\I \tau},u')
	=q(s\cdot e^{\I \tau})\cdot u'
	=q(s)\cdot e^{\I k\tau}u',
	\end{split}
\end{align}
	and since the $\U(1)$-action on $S_k$ is principal, we infer that $u'=ue^{-\I k\tau}$, so
	\begin{equation}
	(s',u')
	=(s\cdot e^{\I \tau},u e^{-\I k\tau})
	=(s,u)\cdot e^{\I \tau}.
	\end{equation}
	Thus, $\varpi_k$ is a quotient map, and $\pi_k\colon S_k\to B$ is the twisted bundle.
	
	Next, we describe the twist correspondence explicitly. It will be convenient to do this first for differential forms and then use the compatibility with contractions to obtain the correspondence for vector fields.
	
	By \eqref{diag:projections_twist}, we have $\mathrm{pr}_S^*\pi^*_S\alpha = \varpi_k^*\pi^*_{k}\alpha$ for any form $\alpha$ on $B$.
	So, we automatically have $\tw(\pi^*_S\alpha) = \pi^*_{k}\alpha$.

In order to compute $\tw(\varphi)$, observe that $\varpi_k$ is the composition of $q\times\id_{\U(1)}$ with the action map $\rho\colon S_k\times\U(1)\to S_k$, and moreover $\rho^*\varphi_k=\mathrm{pr}_{S_k}^*\varphi_k+\mathrm{pr}_{\U(1)}^*\dif \tau$ (cf.\ \cite[Example 27.4 (p.\ 326)]{TuDG}).
Therefore, we have
\begin{equation}
\varpi_k^*\varphi_k
=\mathrm{pr}_S^*q^*\varphi_k+\mathrm{pr}_{\U(1)}^*\dif \tau
=k\,\mathrm{pr}_S^*\varphi+\mathrm{pr}_{\U(1)}^*\dif \tau.
\end{equation}
Consider now the $Z_P$-invariant $1$-form
	\begin{equation}
	\mathrm{pr}_S^*\varphi - \frac{1}{f}\theta_P
	= -\frac{1}{f}(k\,\mathrm{pr}_S^*\varphi +\mathrm{pr}_{\U(1)}^*\dif \tau + \mathrm{pr}_S^*\pi_S^*\beta)
	=\varpi_k^*\left(-\frac{1}{f}(\varphi_k+\pi_k^*\beta)\right).
	\end{equation}
	This agrees with $\mathrm{pr}_S^*\varphi$ on $\theta_P$-horizontal vector fields, yielding
	\begin{equation}
	\tw(\varphi)=-\frac{1}{f}(\varphi_k+\pi_k^*\beta).
\end{equation}

	The twist correspondence for vector fields is determined by the fact that contractions with $1$-forms are preserved under the twist correspondence. We may thus check that
	\begin{align}
	&(\pi^*_{S}\alpha)(X^{\varphi})=\alpha(X)=(\pi^*_{k}\alpha)(X^{\varphi_k} - \beta(X)Z_k),\\
	&\varphi(X^{\varphi})=0=-\frac{1}{f}(\varphi_k(X^{\varphi_k}-\beta(X)Z_k) + (\pi_{k}^*\beta)(X^{\varphi_k}-\beta(X)Z_k)),\nonumber\\
	&(\pi^*_{S}\alpha)(Z)=0=(\pi^*_{k}\alpha)(-fZ_k),\nonumber\\
	&\varphi(Z) = 1= -\frac{1}{f}(\varphi_k(-fZ_k) + (\pi^*_{k}\beta)(-fZ_k)).\nonumber\qedhere
	\end{align}
\end{proof}
\begin{rmk}\label{rmk:rational_twist}
If $k\in\R$ is arbitrary, one could still construct the bundle $P$ so that the $\U(1)$-action generated by $Z$ lifts to a $\U(1)$-action on $P$ (generated by $Z_P$), but that bundle wouldn't be trivial in general \cite{Swann2010}.
In case $k=\frac{a}{b}\in\Q$, with $a,b\in\Z\smallsetminus \{0\}$, when $\frac{1}{b}\dif\varphi$ corresponds to an integral class, we have a $\U(1)$-bundle $S_{\frac{1}{b}}$ such that $S$ is a $\Z_b$-quotient of $S_{\frac{1}{b}}$.
If we now interpret $S_{\frac{1}{b}}$ as a principal $\Z_{b}$-bundle over $S$, we can define $P$ as the associated bundle $S_{\frac{1}{b}}\times_{\Z_{b}} \U(1)$.
This exhibits $P$ as a principal $\U(1)$-bundle over $S$.
Furthermore, $Z_P$ generates a $\U(1)$-action on $P$ given by
\begin{equation}
[s,u]\cdot e^{\I \tau}=[s e^{\I \tau},u e^{-\I a \tau}].
\end{equation}
With respect to this action, the orbit containing an arbitrary element $[s,e^{\I \tau}]$, also contains the elements of the form $[s e^{\frac{\I\tau}{a}},1]$.
Notice that we have the embedding 
\begin{align}
S_{\frac{1}{b}}\longrightarrow P,
&\hfill&
s\longmapsto [s,1].
\end{align}
The image of this embedding then intersects the orbits of the action generated by $Z_P$ in exactly $a$ points, on which the $Z_P$-action induces a $\Z_a$-action.
Hence, the quotient $P/\langle Z_P\rangle$ can be identified with the $\Z_a$-quotient of $S_{\frac{1}{b}}$, denoted $S_{\frac{a}{b}}=:S_k$.
\end{rmk}

\begin{lemma}\label{lem:tw_symmetries}
	Let $\pi_S\colon S \rightarrow B$ be a principal $\U(1)$-bundle with connection $1$\nobreakdash-form $\varphi$ and fundamental vector field $Z$.
	Let $(Z, \omega, f)$ and $\beta$ be as in Lemma \ref{lemma:twist}, and furthermore assume that $\omega$ is non-degenerate.
	Let $Y$ be a vector field on $S$ satisfying
	\begin{align}
		\Lie{Y}f = 0,&
		\hfill &
		\Lie{Y}\varphi= \Lie{Y}\pi_S^*\beta= 0. 
	\end{align}
	Then, $Y$ is $\omega$-Hamiltonian with Hamiltonian function 
	\begin{equation}
		f_Y=((f+k)\varphi + \pi_S^*\beta)(Y).
	\end{equation} 
	Moreover, $Y$ is $Z$-invariant and its twist is given by
	\begin{equation}\label{eq:twist_as_quotient}
		q_*Y = (\tw(Y) + f_Y Z_k)_q.
	\end{equation}
\end{lemma}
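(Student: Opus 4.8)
Set $\lambda \colonequals (f+k)\varphi + \pi_S^*\beta$, so that $\omega = \dif\lambda$ and $f_Y = \lambda(Y)$; here and below I write $f$ also for its pullback $\pi_S^*f$. The plan is to prove the three assertions in the order stated, since each relies on the previous. For the first, I would note that the hypotheses on $Y$ make $\Lie{Y}$ annihilate every summand of $\lambda$, so $\Lie{Y}\lambda = (\Lie{Y}f)\varphi + (f+k)\Lie{Y}\varphi + \Lie{Y}\pi_S^*\beta = 0$; Cartan's formula then gives $\iota_Y\omega = \iota_Y\dif\lambda = \Lie{Y}\lambda - \dif(\iota_Y\lambda) = -\dif f_Y$, which is precisely the assertion that $Y$ is $\omega$-Hamiltonian with Hamiltonian function $f_Y$ (the same sign convention under which $Z$ has Hamiltonian function $f$: since $\varphi$ is a principal connection and $f,\beta$ are pulled back from $B$ one also has $\Lie{Z}\lambda = 0$, whence $\iota_Z\omega = -\dif(\iota_Z\lambda) = -\dif(f+k) = -\dif f$).

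For $Z$-invariance I would invoke the non-degeneracy of $\omega$. From $\Lie{Z}\lambda = 0$ one gets $\Lie{Z}\omega = \dif\Lie{Z}\lambda = 0$, so the operator identity $\iota_{[Z,Y]} = \Lie{Z}\iota_Y - \iota_Y\Lie{Z}$ applied to $\omega$ gives $\iota_{[Z,Y]}\omega = \Lie{Z}(\iota_Y\omega) = -\dif(\Lie{Z}f_Y)$. But $\Lie{Z}f_Y = (\dif f_Y)(Z) = -\omega(Y,Z) = (\iota_Z\omega)(Y) = -(\dif f)(Y) = -\Lie{Y}f = 0$, so $\iota_{[Z,Y]}\omega = 0$, and non-degeneracy forces $[Z,Y] = 0$. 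Thus $Y$ is $Z$-invariant, $\tw(Y)$ is defined, and $f_Y$---being $Z$-invariant, hence invariant under the whole connected group $\U(1)$ and a fortiori under $\Z_k$---descends along $q$, so the right-hand side of \eqref{eq:twist_as_quotient} is meaningful.

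For the last assertion I would unwind the definition of $\tw(Y)$ using the explicit model of the twist from the proof of Lemma \ref{lemma:twist}: $P = S\times\U(1)$ with connection $\theta_P = \mathrm{pr}_S^*\lambda + \dif\tau$, fundamental vector field $\partial_\tau$, and quotient map $\varpi_k(s,u) = q(s)\cdot u$ onto $S_k$. Since $\theta_P(\partial_\tau) = 1$, the $\theta_P$-horizontal lift of $Y$ is $Y^{\theta_P} = \tilde Y - \lambda(Y)\,\partial_\tau$, where $\tilde Y$ denotes $Y$ lifted to $P$ through the product structure. Restricting to the slice $S\times\{1\}$, along which $\varpi_k$ restricts to $q$, one has $(\varpi_k)_*(\tilde Y_{(s,1)}) = q_*Y_s$; and since $\varpi_k$ intertwines the principal $\U(1)$-action on $P$ generated by $\partial_\tau$ with that on $S_k$ generated by $Z_k$, one has $(\varpi_k)_*(\partial_\tau)_{(s,1)} = (Z_k)_{q(s)}$. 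Combining these with the defining property $(\varpi_k)_*Y^{\theta_P} = \tw(Y)$ of the twist, evaluated at $(s,1)$, yields
\begin{equation*}
\tw(Y)_{q(s)} = (\varpi_k)_*(Y^{\theta_P})_{(s,1)} = q_*Y_s - f_Y(s)\,(Z_k)_{q(s)},
\end{equation*}
which rearranges to $q_*Y = (\tw(Y) + f_Y Z_k)_q$.

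I expect the genuinely delicate part to be this last step---not any single computation, but keeping straight which $\U(1)$-action and which base projection ($\mathrm{pr}_S$ or $\varpi_k$) each lift is horizontal or projectable with respect to. Once the slice $S\times\{1\}$ is fixed the computation is just differentiation of the explicit maps $q$ and $\varpi_k$, and the two preceding assertions are short formal manipulations with Cartan's formula and the non-degeneracy of $\omega$.
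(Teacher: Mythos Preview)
Your argument is correct. The first two parts match the paper's proof almost verbatim: both use Cartan's formula on the primitive $\lambda=(f+k)\varphi+\pi_S^*\beta$ for the Hamiltonian claim, and both deduce $[Z,Y]=0$ from non-degeneracy of $\omega$ together with the vanishing of a Lie derivative (you compute $\Lie{Z}\omega=0$ and $\Lie{Z}f_Y=0$, while the paper uses the symmetric variant $\Lie{Y}\omega=0$ and $\Lie{Y}(\iota_Z\omega)=0$; these are interchangeable).

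For the final formula the two proofs diverge. The paper stays within the framework of Lemma~\ref{lemma:twist}: it splits $Y=Y_B^{\varphi}+\varphi(Y)Z$, applies the already-established formulas $\tw(X^{\varphi})=X^{\varphi_k}-\beta(X)Z_k$ and $\tw(Z)=-fZ_k$, and then recognises $Y_B^{\varphi_k}+k\varphi(Y)Z_k$ as $q_*Y$ via the relations \eqref{eq:fundamentalVF_principalConn_discrete_quotient}. You instead return to the explicit model $P=S\times\U(1)$, compute the $\theta_P$-horizontal lift $\tilde Y-\lambda(Y)\partial_\tau$, and push it forward along $\varpi_k$ restricted to the slice $S\times\{1\}$, where $\varpi_k$ coincides with $q$. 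Your route is more self-contained (it does not require recalling the specific twist formulas), while the paper's route better exhibits the result as a corollary of Lemma~\ref{lemma:twist}; both arrive at $q_*Y=(\tw(Y)+f_YZ_k)_q$ with equal rigour.
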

\begin{proof}
	By the Cartan formula, we have
	\begin{equation}
	\begin{split}
		\iota_{Y}\omega &= \iota_Y\dif((f+k)\varphi + \pi^*_S \beta)=(\Lie{Y}- \dif\circ\iota_Y)((f+k)\varphi + \pi^*_S \beta)\\
		&= -\dif\big(((f+k)\varphi + \pi^*_S \beta)(Y)\big)
		=-\dif f_Y.
		\end{split}
	\end{equation}
	Thus, $Y$ is $\omega$-Hamiltonian with Hamiltonian function $f_Y$.
	 
	Since the Lie derivative commutes with the exterior derivative, we also have
	\begin{equation}
	\begin{split}
		\Lie{Y}\omega
		&=\Lie{Y}\dif((f+k)\varphi + \pi_S^*\beta)\\
		&=\dif\circ\Lie{Y}((f+k)\varphi + \pi_S^*\beta)
		=0,\\
		\Lie{Y}(\iota_Z\omega)
		&=-\Lie{Y}\dif f 
		=-\dif \circ \Lie{Y} f
		=0.
		\end{split}
	\end{equation}
	In particular, we get 
	\begin{equation}
	\omega(\Lie{Y}Z, \cdot)=\Lie{Y}(\iota_Z\omega)-(\Lie{Y}\omega)(Z,\cdot)=0.
	\end{equation}
	Since $\omega$ is assumed to be non-degenerate, this implies $\Lie{Z}Y=-\Lie{Y}Z=0$.
	
	In order to take its twist, we decompose $Y$ into its vertical and $\varphi$-horizontal parts.
	Since $Y$ is $Z$-invariant, we can find a vector field $Y_B$ on $B$ such that
	\begin{equation}
		Y = Y_B^\varphi + \varphi(Y)Z.
	\end{equation}
	Then, by Lemma \ref{lemma:twist} we have
	\begin{equation}
	\begin{split}
	\tw(Y) &= \tw(Y_B^\varphi) + \tw(\varphi(Y)Z) = Y_B^{\varphi_k} - \beta(Y_B)Z_k - f\varphi(Y)Z_k\\
	&=Y_B^{\varphi_k} + k\varphi(Y)Z_k - ((f+k)\varphi + \pi_S^*\beta)(Y)Z_k\\
	&=Y_B^{\varphi_k} + k\varphi(Y)Z_k - f_YZ_k.
	\end{split}
	\end{equation}
	Note that the pullback of the vector field $Y_B^{\varphi_k} + k\varphi(Y)Z_k$ along the quotient map $q$ is precisely $q_*Y$. The statement to be proved thus follows.
\end{proof}

\section{The geometry of the c-map}\label{sec:c-map}
In this section, we prove the main theorem of this paper (Theorem \ref{th:sugra-c}), which describes the supergravity c-map as a natural construction on variations of Hodge structure.
Using this description, we furthermore construct the vertical Killing vector fields on the resulting quaternionic K\"ahler manifold (Proposition \ref{prop:vertical_Killing}), and show that they form the Heisenberg algebra (Corollary \ref{cor:Heisenberg}).
Finally, in \S \ref{ssec:functoriality}, we use the naturality of our c-map construction to interpret it as a functor from the category of certain variations of Hodge structure to the one of quaternionic K\"ahler manifolds.
As a last result (Proposition \ref{prop:vorsicht-funktor}), we show that our lifting of isomorphisms reproduces the one of infinitesimal automorphisms described in \cite{CST}.
\subsection{Rigid c-map}

The rigid c-map assigns a hyperkähler manifold to an affine special Kähler manifold $\wt M$. The hyperkähler structure may be defined on either the tangent or cotangent bundle. The two descriptions are related by the symplectic form $\wt \omega$ interpreted as a map $T\wt M \rightarrow T^*\wt M$. Note that, since $\wt \omega$ is $\wt \nabla$-parallel, the differential $\wt \omega_*$ preserves the splitting of $TT\wt M$ induced by $\wt \nabla$. In terms of the splitting, we have 
\begin{equation}
\begin{split}
	\wt \omega_*=
	\begin{pmatrix}
	\id & 0 \\ 0& \widetilde{\omega}
	\end{pmatrix}\colon \left(T\wt M \oplus T\wt M\right)_{T\wt M} &\longrightarrow \left(T\wt M \oplus T^*\wt M\right)_{T^*\wt M}.
\end{split}
\end{equation}
On either of the tangent or cotangent bundle, there are several equivalent choices of hyperkähler structure that we may make.
In \cite[Equation 6 (p.\ 106)]{CST}, the cotangent bundle $T^*\wt M$ was used, on which the hyperkähler structure was chosen to be
\begin{equation}\label{eq:cotangent_HK_structure}
\widehat{g}=
\begin{pmatrix}
\widetilde{g} & 0 \\ 0 & \widetilde{g}^{-1}
\end{pmatrix},
\quad
I_1=
\begin{pmatrix}
I & 0 \\ 0 & I^*
\end{pmatrix},
\quad
I_2=
\begin{pmatrix}
0 & -\widetilde{\omega}^{-1} \\ \widetilde{\omega} & 0
\end{pmatrix},
\quad
I_3=I_1I_2.
\end{equation}
We will instead be working with the corresponding hyperkähler structure on the tangent bundle $T\wt M$, that is
\begin{equation}\label{eq:HK_structure_TTMtilde}
	\widehat{g}=
	\begin{pmatrix}
	\widetilde{g} & 0 \\ 0 & \widetilde{g}
	\end{pmatrix},
	\qquad
	I_1=
	\begin{pmatrix}
	I & 0 \\ 0 & -I
	\end{pmatrix},
	\qquad
	I_2=
	\begin{pmatrix}
	0 & -\id \\ \id & 0
	\end{pmatrix},
	\qquad
	I_3=I_1I_2.
	\end{equation}
In fact, it will be convenient to make use of the identification of $T\widetilde M$ with $E_{\pi}$, cf.\ Remark \ref{rmk:TMtilde=[E]}, which allows us to identfy the vertical subbundle $\mathcal V \subseteq TT\wt M$ with the pullback of $E$ to $E_{\pi}$ by \eqref{diag:exact}. Then, we can define the metric on the total space of $E_{\pi}$ to be $\widetilde{g}$ on the horizontal subbundle $\mathcal H$, and to be the real part of the Griffiths hermitian form $h_{\mathrm G}$ on the vertical subbundle $\mathcal V$.
We can write this in terms of the tautological section $\Phi_{E}$, using $\nabla\Phi_{E}\colon T(E_\pi)\to E_\pi$ as the projection to the vertical part (cf.\ \S \ref{ssec:Connections}).
\begin{equation}
\begin{split}
\widehat{g}
&=\widetilde{g}
+\Re(h_{\mathrm{G}})(\nabla\Phi_{E},\nabla\Phi_{E})\\
&=r^2g_M -dr^2-r^2\widetilde{\varphi}^2+(-g_L+g_L\otimes g_M-\omega_L\otimes \omega_M)(\nabla\Phi_{E},\nabla\Phi_{E}).
\end{split}
\end{equation}
The hyperk\"ahler complex structures $I_1, I_2,I_3$ in \eqref{eq:HK_structure_TTMtilde} are then given by
\begin{equation}\label{eq:HK_structure_pi-1E}
\begin{split}
I_1 X
&=\bigg(I\left(p^{E_{\pi}}_{\widetilde{M}}\right)_*X\bigg)^{\nabla}-\mathrm{vert}( I_{\mathrm{G}}\nabla_X \Phi_{E}),\\
I_2 X&=\mathrm{vert}\circ\phi \bigg(\!\left(p^{E_{\pi}}_{\widetilde{M}}\right)_*X\bigg) -\bigg(\phi^{-1}\left(\nabla_X\Phi_{E}\right)\bigg)^{\nabla},\\
I_3X&=I_1I_2X.
\end{split}
\end{equation}

\subsection{Supergravity c-map}
\label{ssec:supergravity_c-map}
The supergravity c-map assigns to a PSK manifold $M$ a family of quaternionic Kähler manifolds $N_{2k}$ parametrised by an integer $k$ (we write $2k$ instead of $k$ for consistency with the notation in \cite{CST}).
The $k=0$ case will be called the \emph{undeformed c-map}, while the $k>0$ case will be called the \emph{deformed c-map}.
One can also define the c-map for $k<0$, but only the non-negative case is complete (cf.\ \cite[Theorem 13]{CortesCompProj} and \cite[Remark 9 (p.\ 287)]{CortesHKQK}).
Therefore, in this section, we will present an intrinsic formulation of the c-map where $k$ is a non-negative integer.
For our description of the c-map, we will adopt the twist approach of \cite{MaciaSwann2015}.

In order to introduce the supergravity c-map metric, we first need to define the following two tensor fields, given in terms of the $\nabla$-horizontal lift of $-I\xi$, which we call $Z$:
\begin{align}
g_{\Hq Z}\colonequals \widehat{g}|_{\spn{Z,I_1 Z,I_2 Z,I_3 Z}},
&\hfill&
g_{\perp}\colonequals \widehat{g}|_{\spn{Z,I_1 Z,I_2 Z,I_3 Z}^{\perp}}.
\end{align}

\begin{lemma}
	In terms of the VPHS data, we have
	\begin{equation}
	\begin{split}
	g_{\Hq Z}&=-\dif r ^2-r^2\widetilde{\varphi}^2-\Re{h_L}(\nabla\Phi_{E},\nabla\Phi_{E}),\\
	g_{\perp}&=r^2 g_M+\Re(h_L\otimes h_M)(\nabla\Phi_{E},\nabla\Phi_{E}).
	\end{split}
	\end{equation}
\end{lemma}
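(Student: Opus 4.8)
The plan is to compute the orthogonal decomposition $T(E_\pi) = \spn{Z, I_1 Z, I_2 Z, I_3 Z} \oplus \spn{Z, I_1 Z, I_2 Z, I_3 Z}^\perp$ explicitly, identifying the $4$-dimensional distribution $\spn{Z, I_1 Z, I_2 Z, I_3 Z}$ in terms of the $\nabla$-splitting $T(E_\pi) \cong \mathcal H \oplus \mathcal V$, and then restrict $\widehat g$ to each summand. First I would recall that $Z$ is the $\nabla$-horizontal lift of $-I\xi$, so it lies entirely in $\mathcal H \cong (T\wt M)_{E_\pi}$, and corresponds under the identification $T\wt M \cong E_\pi$ to a section built from $\zeta, \ol\zeta$ via $-I\xi = \I(\zeta - \ol\zeta)$. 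Then I would apply the formulas \eqref{eq:HK_structure_pi-1E} for $I_1, I_2, I_3$ to $Z$: since $Z$ is horizontal with $\nabla_Z \Phi_E$ equal to $\Phi_E$-type vertical contribution governed by $\wt\nabla Z = \wt\nabla(-I\xi) = -I$, the vectors $I_1 Z$ and $I_2 Z$ acquire both horizontal and vertical parts. Concretely $I_1 Z$ will be horizontal (lift of $-I(I\xi) = \xi$, up to sign) plus a vertical correction $-\vt(I_{\mathrm G}\nabla_Z\Phi_E)$, while $I_2 Z$ mixes $\vt\circ\phi$ of the horizontal part of $Z$ with a horizontal lift of $\phi^{-1}(\nabla_Z\Phi_E)$. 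The upshot is that $\spn{Z, I_1 Z, I_2 Z, I_3 Z}$ is spanned by the horizontal lifts of $\xi, I\xi$ together with the two vertical directions corresponding to $\Phi_E$ and $I_{\mathrm G}\Phi_E$ inside the line-bundle summand $L = E^{3,0}$ of $E_\pi$.

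Once the distribution is identified this way, the computation of $g_{\Hq Z} = \widehat g|_{\spn{Z,I_1Z,I_2Z,I_3Z}}$ reduces to two pieces: on the horizontal part spanned by $\xi^\nabla, (I\xi)^\nabla$, the metric $\widehat g$ restricts to $\wt g$ restricted to $\spn{\xi, I\xi}_\R$, which by Remark \ref{rmk:principal_C*_connection} equals $-\dif r^2 - r^2\wt\varphi^2$; on the vertical part spanned by the $L$-directions, $\Re(h_{\mathrm G})$ restricted to $L \oplus \ol L$ is just $-\Re(h_L)$ by the formula $h_{\mathrm G} = -h_L + h_L\otimes h_M$, evaluated on $\nabla\Phi_E$. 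Adding these gives the claimed expression for $g_{\Hq Z}$. For $g_\perp$, the orthogonal complement consists of the $\wt g$-orthogonal complement of $\spn{\xi, I\xi}_\R$ inside $\mathcal H$ — which is $r^2 g_M$ by the same Remark — together with the complementary vertical directions, namely the $L\otimes T^{1,0}M \oplus \ol L \otimes T^{0,1}M$ summand, on which $\Re(h_{\mathrm G}) = \Re(h_L \otimes h_M)$. Evaluated on $\nabla\Phi_E$ this yields the second formula.

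The main obstacle I anticipate is bookkeeping: verifying that $\spn{Z, I_1 Z, I_2 Z, I_3 Z}$ closes up to exactly the span of $\{\xi^\nabla, (I\xi)^\nabla\}$ and the $L\oplus\ol L$ vertical directions, and in particular that no $T^{1,0}M$-valued vertical components leak in. This requires carefully tracking how $\phi$ and $\phi^{-1}$ act on the horizontal component of $Z$ (which is a $\zeta,\ol\zeta$-combination, hence maps under $\phi$ purely into the $L \oplus \ol L$ part, since $\chi(\zeta) = 1$ and $\pi_*\zeta = 0$) and confirming that $\nabla_Z\Phi_E$ is likewise $L\oplus\ol L$-valued, so that $\phi^{-1}(\nabla_Z\Phi_E)$ is again a $\zeta,\ol\zeta$-combination with no horizontal-lift-of-$TM$ part. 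Granting the horizontality facts about $\wt\eta$ (it annihilates $\zeta, \ol\zeta$) established in \S\ref{sec:SK}, this check goes through cleanly, and the metric restrictions then follow from the already-established expressions for $\wt g$, $h_{\mathrm G}$, $h_L$, and $h_M$.
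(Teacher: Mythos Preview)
Your approach is correct and matches the paper's: compute $Z, I_1Z, I_2Z, I_3Z$ via \eqref{eq:HK_structure_pi-1E}, identify their span as the horizontal lifts of $\xi, I\xi$ together with the vertical $\llbracket L\rrbracket$-directions, and then restrict $\widehat g$ summand by summand using $\wt g = r^2 g_M - \dif r^2 - r^2\wt\varphi^2$ and $h_{\mathrm G} = -h_L + h_L\otimes h_M$. The bookkeeping obstacle you anticipate dissolves once you note that $Z = -(I\xi)^{\nabla}$ is a $\nabla$-horizontal lift \emph{by definition}, so $\nabla_Z\Phi_E = 0$ identically (cf.\ \S\ref{ssec:Connections}); consequently $I_1Z = \xi^{\nabla}$ is purely horizontal while $I_2Z = \vt(-\I\Phi_L + \I\ol\Phi_L)$ and $I_3Z = -\vt(\Phi_L + \ol\Phi_L)$ are purely vertical in $\llbracket L\rrbracket$, with no $T^{1,0}M$-valued components appearing at all.
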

\begin{proof}
	Using \eqref{eq:HK_structure_pi-1E}, we can explicitly describe $\Hq Z\colonequals \langle Z,I_1Z,I_2Z,I_3Z \rangle$, via
	\begin{align}
	\begin{split}
	Z&=-(I\xi)^{\nabla},\\
	I_1Z&=\xi^{\nabla}-\vt(I_{\mathrm G}\nabla_{-I\xi^{\nabla}} \Phi_{E})
	=\xi^{\nabla},\\
	I_2Z&=-\vt \circ \phi(I\xi)+0
	=\vt(-\I\Phi_L+\I\ol{\Phi}_L),\\
	I_3Z&=I_1\vt(-\I\Phi_L+\I\ol{\Phi}_L)
	=-\vt(\Phi_L+\ol{\Phi}_L).
	\end{split}
	\end{align}
	We can see that $Z$ and $I_1 Z$ span the vertical part of $\widetilde{M}\to M$, whereas $I_2 Z$ and $I_3 Z$ span $\llbracket L \rrbracket \subseteq E$.
	The description of $g_{\Hq Z}$ and $g_{\perp}$ follows.
\end{proof}

In order to define the supergravity c-map, we make use of its description in terms of the twist.
Let $c\in\R_{\ge 0}$, we define $\widetilde{M}_{>c}\colonequals \{u\in\widetilde{M}\mid -\widetilde{g}_u(\xi,\xi)>c\}$.
\begin{prop}[\cite{MaciaSwann2015}, Lemma 5.2 (p.\ 1347)]
\label{prop:cmap_as_twist}
	The supergravity c-map metric is the twist of a constant scalar multiple of the tensor field
	\begin{equation}\label{eq:gH_Macia_Swann}
	g_{\mathrm{H}}=\frac{1}{r^2-2k}\left(g_{\perp}-\frac{r^2+2k}{r^2-2k}\,g_{\Hq Z}\right).
	\end{equation}
	with respect to the twist data
	\begin{equation}\label{eq:twist_data_c-map}
	(Z,\omega_{\mathrm{H}},f_{\mathrm{H}})=
	\left(-(I\xi)^{\nabla},
	\begin{pmatrix}
	-\widetilde{\omega}&0\\
	0&-\widetilde{\omega}
	\end{pmatrix},-\frac{1}{2}(r^2+2k)\right).
	\end{equation}
\end{prop}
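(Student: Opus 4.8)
The statement is, up to conventions, precisely \cite[Lemma 5.2]{MaciaSwann2015} rendered in the language assembled in \S\ref{sec:twist} and \S\ref{ssec:supergravity_c-map}, so the plan is not to re-derive the twist realisation of the c-map but to produce a dictionary matching our objects $Z$, $\omega_{\mathrm H}$, $f_{\mathrm H}$, $g_{\mathrm H}$ to theirs. First I would fix this dictionary. Macia and Swann realise the rigid c-map hyperk\"ahler manifold on the cotangent bundle $T^*\widetilde M$, whereas we use the tangent bundle $T\widetilde M \cong E_\pi$; the two are identified by $\widetilde\omega_*$, which is $\widetilde\nabla$-parallel, hence respects the splitting of the double tangent bundle and carries \eqref{eq:cotangent_HK_structure} to \eqref{eq:HK_structure_TTMtilde}, and so to \eqref{eq:HK_structure_pi-1E}. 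Under this dictionary the circle-generating vector field is $Z = -(I\xi)^{\nabla}$, and the twist $2$-form is the $\widetilde\nabla$-parallel form whose block presentation relative to $\mathcal H \oplus \mathcal V$ is $\mathrm{diag}(-\widetilde\omega,-\widetilde\omega)$. Using $\nabla^{\widetilde g}\xi = \id$ one computes $\iota_Z \omega_{\mathrm H} = \tfrac12\,\dif r^2 = -\dif f_{\mathrm H}$ with $f_{\mathrm H} = -\tfrac12(r^2+2k)$, which confirms that $f_{\mathrm H}$ is a Hamiltonian for $Z$; the additive constant is the choice of Hamiltonian encoding the deformation, with our $2k$ playing the role of the (a priori continuous) deformation parameter appearing in \cite{MaciaSwann2015,CortesCompProj}.

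Second I would verify the expression \eqref{eq:gH_Macia_Swann} for $g_{\mathrm H}$. By the preceding lemma we have the $\widehat g$-orthogonal decomposition $\widehat g = g_\perp + g_{\Hq Z}$ associated to the rank-four distribution $\Hq Z = \spn{Z, I_1 Z, I_2 Z, I_3 Z}$, and moreover $\widehat g(Z,Z) = \widetilde g(\xi,\xi) = -r^2$. Macia and Swann express the quaternionic K\"ahler metric (before twisting) as $\widehat g$ restricted to $(\Hq Z)^{\perp}$ together with a rescaling of $\widehat g$ restricted to $\Hq Z$, the two rescaling coefficients being rational functions of $f_{\mathrm H}$ and of $\widehat g(Z,Z)$; substituting the values just found turns these coefficients into $\tfrac{1}{r^2-2k}$ and $\tfrac{r^2+2k}{r^2-2k}$, which is exactly \eqref{eq:gH_Macia_Swann}. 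The condition $r^2 > 2k$, i.e.\ that we work over $\widetilde M_{>2k}$, is precisely what is needed for these coefficients to be finite and of the correct sign.

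I expect the only genuine difficulty to be the bookkeeping of conventions: our use of $T\widetilde M$ rather than $T^*\widetilde M$, the signature change coming from taking the Griffiths rather than the Weil hermitian form on $\mathcal V$, and the normalisation of the deformation parameter as $2k$. Once that dictionary is pinned down the identification is a direct substitution, so I would present it as such and defer to \cite{MaciaSwann2015} for the proof that the supergravity c-map metric is genuinely the twist of (a constant scalar multiple of) $g_{\mathrm H}$ with respect to the stated twist data.
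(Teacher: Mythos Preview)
Your proposal is correct and in fact goes somewhat beyond what the paper does: the paper states this proposition purely as a citation of \cite[Lemma 5.2]{MaciaSwann2015} and gives no proof or translation at all, whereas you sketch the dictionary (tangent versus cotangent model via $\widetilde\omega_*$, the Hamiltonian computation $\iota_Z\omega_{\mathrm H}=\tfrac12\dif r^2$, and the identification of the rescaling coefficients). Both approaches ultimately defer to Macia--Swann for the substance, so there is nothing to correct.
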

Following \cite{MaciaSwann2015} and \cite{CST}, we will refer to $g_{\mathrm{H}}$ as the \emph{elementary deformation} of the hyperk\"ahler metric $\widehat{g}$.
\begin{rmk}
	Note that $\omega_{\mathrm{H}}$ is given in terms of the tautological section $\Phi_{E}$ by
	\begin{equation}
	\omega_{\mathrm{H}}
	= - \widetilde \omega - Q(\nabla \Phi_{E }, \nabla \Phi_{E })
	= - \widetilde \omega - \frac{1}{2}\,\dif(Q(\Phi_{E }, \nabla \Phi_{E})).
	\end{equation}
	Here, the second step follows from the fact that $\nabla$ is flat and preserves $Q$, so
	\begin{equation}
	\begin{split}
	&\dif(Q(\Phi_{E}, \nabla \Phi_{E}))(X,Y)\\ 
	&\qquad\qquad= \nabla_X(Q(\Phi_{E}, \nabla_Y \Phi_{E})) - \nabla_Y(Q(\Phi_{E}, \nabla_X \Phi_{E})) \\
	&\qquad\qquad\quad- Q(\Phi_{E}, \nabla_{[X,Y]} \Phi_{E})\\
	&\qquad\qquad=Q(\nabla_X \Phi_{E}, \nabla_Y \Phi_{E})-Q(\nabla_Y\Phi_{E}, \nabla_X \Phi_{E})\\
	&\qquad\qquad\quad - Q(\Phi_{E},([\nabla_X, \nabla_Y] -\nabla_{[X,Y]}) \Phi_{E})\\
	&\qquad\qquad=2Q(\nabla_X \Phi_{E}, \nabla_Y \Phi_{E}).
	\end{split}
	\end{equation}
\end{rmk}
In order to apply the twist construction to $T\wt M_{>2k}$, we would like to present it as a principal $\U(1)$-bundle so that we can apply Lemma \ref{lemma:twist}. To this end, we consider the following diagram.
\begin{equation}\label{diag:cube1}
\begin{tikzcd}[row sep=scriptsize, column sep=scriptsize]
&\widetilde{M}_{>2k} \arrow[rd] \arrow[dd,"{(\pi,-\widetilde{g}(\xi,\xi))}\quad"description]\arrow[dddr,"\pi",crossing over]&\\
T\widetilde{M}_{>2k} \arrow[ur]\arrow[dd,->, dashed]&& S \arrow[->,dd]\\
& M\times \R_{>2k} \arrow[dr,"\mathrm{pr}_M"']&\\
 E\times \R_{>2k}\arrow[dr] \arrow[ur] & & M \\
& E\arrow[uuul,<-,crossing over]\arrow[ur,"\pi_E"']
\end{tikzcd}
\end{equation}
The diagonal square in \eqref{diag:cube1} is a pullback, as we identified $E_{\pi}$ with $T\widetilde{M}$, and $T\widetilde{M}_{>2k}$ is its restriction to $\widetilde{M}_{>2k}\subseteq \widetilde{M}$.
The bottom square is also a pullback, and all of the undashed arrows in diagram \eqref{diag:cube1} commute.
So, by the universal property of pullbacks, we infer the existence of a (unique) dashed arrow, which makes the entire diagram commute.
Moreover, since the diagonal and bottom squares are pullbacks, it follows by categorical arguments that the diagram on the left side is a pullback as well.
Notice that the right diagram is also a pullback (in the category of smooth manifolds), as $\widetilde{M}_{>2k}=S\times \R_{>2k}$.
Consider now the following diagram.
\begin{equation}\label{diag:bold-square}
\begin{tikzcd}[row sep=scriptsize, column sep=scriptsize]
&\widetilde{M}_{>2k} \arrow[rd] \arrow[dd,"{(\pi,-\widetilde{g}(\xi,\xi))}" near end]&\\
T\widetilde{M}_{>2k} \arrow[rr, crossing over,->,very thick] \arrow[dr]\arrow[ur]\arrow[dd,->, very thick]&& S \arrow[->,dd,very thick]\\
& M\times \R_{>2k} \arrow[dr,"\mathrm{pr}_M"]&\\
E\times \R_{>2k}\arrow[ur] \arrow[dr]\arrow[rr,->,"\varpi",very thick] & & M \\
& E\arrow[ur,"\pi_E"']
\end{tikzcd}
\end{equation}
The highlighted square is the composition of the left and right ones, and therefore, it is itself a pullback square.
It follows that the map $T\widetilde{M}_{>2k}\to E\times \R_{>2k}$ is a principal $\U(1)$-bundle, since it is the pullback of the principal $\U(1)$-bundle $S\to M$.

Now we have all the ingredients to prove the main theorem of this paper.
\begin{theo}\label{th:sugra-c}
	Let $k\in\Z$ be non-negative, and let $\pi\colon \widetilde{M}\to M$ be a projective special K\"ahler manifold, whose associated $\U(1)$-bundle is $S$ and has principal connection $\wt\varphi$.
	Let $S_k\to M$ be the $\Z_k$-quotient of $S$ when $k>0$, and let $S_0=M\times\U(1)\to M$.
	Let $\varphi_k$ be the principal connection induced on $S_k\to M$ by $-\wt\varphi$ (cf.\ \eqref{eq:fundamentalVF_principalConn_discrete_quotient}, \eqref{eq:objects_S0}).
	The supergravity c-map with deformation parameter $k$ is $N_{2k}$, obtained as the highlighted pullback in the diagram below.
	\begin{equation}
	\begin{tikzcd}[row sep=scriptsize, column sep=scriptsize]
	N_{2k} \arrow[rr, crossing over,->,very thick] \arrow[dr]\arrow[dd,->, very thick]&& S_k \arrow[->,dd,very thick]\\
	& M\times \R_{>2k} \arrow[dr,"\mathrm{pr}_M"]&\\
	E\times \R_{>2k}\arrow[ur] \arrow[dr]\arrow[rr,->,"\varpi",very thick] & & M \\
	& E\arrow[ur,"\pi_E"']
	\end{tikzcd}
	\end{equation}
	The supergravity c-map metric on $N_{2k}$ is a constant multiple of
	\begin{align}\label{eq:sugra-c}
	\begin{split}
	g_{2k}
	&=\frac{r^2}{r^2-2k}\, g_M
	+\frac{1}{r^2-2k}\Re(h_L\otimes h_M)(\nabla\Phi_{E},\nabla\Phi_{E})\\
	&\quad+\frac{r^2+2k}{(r^2-2k)^2}\,\dif r ^2
	+\frac{4r^2}{(r^2+2k)(r^2-2k)^2}\Big(\varphi_{k}
	-\frac{1}{2}\, \pi_{k}^*( Q(\Phi_{E},\nabla \Phi_{E}))\Big)^2\\
	&\quad + \frac{r^2+2k}{(r^2-2k)^2}\Re{h_L}(\nabla\Phi_{E},\nabla\Phi_{E}).
	\end{split}
	\end{align}
\end{theo}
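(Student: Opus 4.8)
The plan is to assemble the result directly from the preceding structural facts: Proposition~\ref{prop:cmap_as_twist}, which expresses the supergravity c-map metric as the twist of the elementary deformation $g_{\mathrm H}$ with respect to the twist data \eqref{eq:twist_data_c-map}, together with Lemma~\ref{lemma:twist}, which identifies the twist of a principal $\U(1)$-bundle (for the particular form of twist data appearing here) with the $\Z_k$-quotient $S_k\to M$. The first step is to verify that the twist data in \eqref{eq:twist_data_c-map}, once transported to the principal $\U(1)$-bundle $T\wt M_{>2k}\to E\times\R_{>2k}$ established via diagram~\eqref{diag:bold-square}, is of the shape \eqref{eq:twist_data} demanded by Lemma~\ref{lemma:twist}. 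Concretely: $Z = -(I\xi)^\nabla$ is the fundamental vector field of that principal $\U(1)$-structure (this was essentially observed in the lemma computing $g_{\Hq Z}$, since $Z$ and $I_1Z=\xi^\nabla$ span the vertical of $\wt M\to M$, and after passing to $T\wt M_{>2k}\to E\times\R_{>2k}$ only $Z$ remains vertical); the Hamiltonian function $f_{\mathrm H}=-\tfrac12(r^2+2k)$ is, up to the additive constant $-k$ and the multiplicative constant $-\tfrac12$, the pullback of $-\tfrac12 r^2$ from $E\times\R_{>2k}$; and $\omega_{\mathrm H} = -\wt\omega - Q(\nabla\Phi_E,\nabla\Phi_E) = -\wt\omega - \tfrac12\dif(Q(\Phi_E,\nabla\Phi_E))$ is exact relative to the base in the sense needed, namely $\omega_{\mathrm H} = \dif\big((f+k)\varphi + \pi_S^*\beta\big)$ after identifying $f = -\tfrac12 r^2$, $k$ with $k$ (this is why the deformation parameter must be an integer), $\varphi$ with the principal connection $-\wt\varphi$ (so that $-2\dif\wt\varphi$ matches $\dif\wt\omega$ up to the base-form correction — recall $\dif\chi = -2\I\pi^*\omega_M$), and absorbing the remaining piece $-\tfrac12 Q(\Phi_E,\nabla\Phi_E)$ plus the $r^2$-dependence along $\R_{>2k}$ into $\beta$. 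Carrying out this matching carefully, and in particular pinning down $\beta = -\tfrac12 Q(\Phi_E,\nabla\Phi_E)$ (a $1$-form on $E$, pulled back to $E\times\R_{>2k}$) together with the $\dif r$ contribution, is the first substantive computation.

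The second step is to apply Lemma~\ref{lemma:twist} verbatim. It gives immediately that the twist of $T\wt M_{>2k}\to E\times\R_{>2k}$ with respect to this data is the principal bundle $(T\wt M_{>2k})_k\to E\times\R_{>2k}$, which is precisely the pullback of $S_k\to M$ along $E\times\R_{>2k}\to M$ — i.e.\ the highlighted pullback $N_{2k}$ in the theorem's diagram. The lemma also supplies the twist correspondence on forms and vector fields: $\tw(\pi_S^*\alpha)=\pi_k^*\alpha$, $\tw(\varphi) = -\tfrac1f(\varphi_k + \pi_k^*\beta)$, and $\tw(Z) = -fZ_k$, with $f = -\tfrac12 r^2$ and $\beta$ as above. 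These are exactly the substitutions needed to convert $g_{\mathrm H}$ into $g_{2k}$.

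The third and final step is the explicit algebraic substitution. Starting from
\[
g_{\mathrm H}=\frac{1}{r^2-2k}\Big(g_{\perp}-\frac{r^2+2k}{r^2-2k}\,g_{\Hq Z}\Big),
\]
I would plug in the expressions for $g_\perp = r^2 g_M + \Re(h_L\otimes h_M)(\nabla\Phi_E,\nabla\Phi_E)$ and $g_{\Hq Z} = -\dif r^2 - r^2\wt\varphi^2 - \Re h_L(\nabla\Phi_E,\nabla\Phi_E)$ from the lemma, then replace $\wt\varphi$ (the imaginary part of $\chi$, equivalently $-\varphi$ in the lemma's normalisation) by its twist. The key point is that $r^2\wt\varphi^2$ contributes the term built from $\big(\varphi_k - \tfrac12\pi_k^*(Q(\Phi_E,\nabla\Phi_E))\big)$: indeed $\tw(\wt\varphi) = -\tw(\varphi) = \tfrac1f(\varphi_k+\pi_k^*\beta) = -\tfrac{2}{r^2}(\varphi_k - \tfrac12\pi_k^*(Q(\Phi_E,\nabla\Phi_E)))$, and squaring and multiplying by the prefactors produces the coefficient $\tfrac{4r^2}{(r^2+2k)(r^2-2k)^2}$ exactly as in \eqref{eq:sugra-c}; the $r^2$-dependent terms $g_M$, $\dif r^2$, $\Re h_L$ and $\Re(h_L\otimes h_M)$ each pick up their stated rational coefficients by bookkeeping the factors $\tfrac{1}{r^2-2k}$ and $\tfrac{r^2+2k}{(r^2-2k)^2}$. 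Since $\tw$ is the identity on everything pulled back from the base and only reshapes the connection term, this is a finite and mechanical verification.

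I expect the main obstacle to be the first step: correctly identifying the twist data \eqref{eq:twist_data_c-map} — which is phrased on $T\wt M_{>2k}$ as a bundle over $\wt M_{>2k}$ — as twist data on $T\wt M_{>2k}$ viewed instead as a principal $\U(1)$-bundle over $E\times\R_{>2k}$, and in doing so extracting the precise base $1$-form $\beta$ (including its $\dif r$-component) and confirming that the Hamiltonian function differs from $f = -\tfrac12 r^2$ by exactly the integer constant $k$ (up to the overall factor $-\tfrac12$ that gets absorbed into the "constant multiple" in the statement). This is where the diagram chase of \eqref{diag:cube1}--\eqref{diag:bold-square} does its work, and where the hypothesis $k\in\Z_{\ge 0}$ is genuinely used; the rest is a substitution that Lemma~\ref{lemma:twist} was tailor-made to enable.
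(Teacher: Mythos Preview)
Your approach is exactly the paper's: invoke Proposition~\ref{prop:cmap_as_twist}, recognise that the twist data \eqref{eq:twist_data_c-map} has the shape \eqref{eq:twist_data} demanded by Lemma~\ref{lemma:twist}, and then mechanically twist $g_{\mathrm H}$ term by term. One slip to correct in your matching: the $f$ of Lemma~\ref{lemma:twist} should be identified with $f_{\mathrm H}=-\tfrac12(r^2+2k)$ itself, not with $-\tfrac12 r^2$; it is the combination $f+k=f_{\mathrm H}+k=-\tfrac12 r^2$ that multiplies $\varphi=-\wt\varphi$ in the primitive, and accordingly $\tw(\varphi)=-\tfrac{1}{f_{\mathrm H}}(\varphi_k+\pi_k^*\beta)$, which is what produces the factor $(r^2+2k)$ in the denominator of the $\varphi_k$-term. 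Relatedly, $\beta=-\tfrac12 Q(\Phi_E,\nabla\Phi_E)$ has no $\dif r$-component (and one should check, as the paper does, that $\beta$ is $Z$-basic so it genuinely lives on the base $E\times\R_{>2k}$); the $\dif r$-dependence enters only through $\dif f_{\mathrm H}\wedge\varphi$.
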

\begin{proof}
	In Proposition \ref{prop:cmap_as_twist}, we mentioned that the c-map can be interpreted as the twist of the elementary deformation of the hyperk\"ahler metric, i.e.\ of
	\begin{equation}
	\begin{split}
	g_{\mathrm{H}}
	&=\frac{1}{r^2-2k}\bigg(r^2 g_M+\Re(h_L\otimes h_M)(\nabla\Phi_{E},\nabla\Phi_{E})\\
	&\quad+\frac{r^2+2k}{r^2-2k}(\dif r ^2+r^2\widetilde{\varphi}^2+\Re{h_L}(\nabla\Phi_{E},\nabla\Phi_{E}))\bigg),
	\end{split}
	\end{equation}
	up to scaling.
	The twist data is $(Z,\omega_{\mathrm{H}},f_{\mathrm{H}})$.
	Notice that
	\begin{equation}
	\begin{split}
	\omega_{\mathrm{H}}
	&=-\widetilde \omega - \frac{1}{2}\,\dif(Q(\Phi_{E}, \nabla \Phi_{E}))\\
	&=\dif\Big((f_{\mathrm{H}}+k)\Big(\underbrace{\vphantom{\frac{a}{b}}-\widetilde{\varphi}}_{=:\varphi}\Big)\!\Big)+\dif\Big(\underbrace{-\frac{1}{2}\, Q(\Phi_{E},\nabla \Phi_{E})}_{=:\beta}\!\Big).
	\end{split}
	\end{equation}
	Furthermore, $\beta$ descends to $E$, as $\beta(Z)=0$ and
	\begin{equation}
	\Lie{Z} \beta
	=\iota_Z\dif \beta+\dif \iota_Z\beta
	=2\iota_Z Q(\nabla\Phi_{E},\nabla\Phi_{E})+0
	=0.
	\end{equation}
	Thus, $\omega_{\mathrm{H}}$ is of the form \eqref{eq:twist_data}, and we can apply Lemma \ref{lemma:twist} to regard $N_{2k}$ as the $\Z_k$-quotient of $T\wt M_{>2k}$ for $k>0$ and as $E\times \R_{>0}\times \U(1)$ for $k=0$.
	Using Lemma \ref{lemma:twist} we can also explicitly twist $g_{2k}$, obtaining
	\begin{align}
	\begin{split}
	g_{2k}
	&=\mathrm{tw}(g_{\mathrm{H}})\\
	&=\frac{1}{r^2-2k}\Bigg(r^2 g_M+\Re(h_L\otimes h_M)(\nabla\Phi_{E},\nabla\Phi_{E})\vphantom{\frac{1}{1}}+\frac{r^2+2k}{r^2-2k}\bigg(\dif r ^2\\
	&\quad+\frac{r^2}{f_{\mathrm{H}}^2}\Big(\varphi_{k} -\frac{1}{2}\, \pi_{k}^*( Q(\Phi_{E},\nabla \Phi_{E}))\Big)^2+\Re{h_L}(\nabla\Phi_{E},\nabla\Phi_{E})\bigg)\Bigg).
	\end{split}
	\end{align}
	Equation \eqref{eq:sugra-c} follows once we substitute $f_{\mathrm{H}}=-\frac{1}{2}(r^2+2k)$.
\end{proof}
\begin{rmk}
If we use the Griffiths hermitian form $h_{\mathrm{G}}=-h_L+h_L\otimes h_M$, since $Q=\Im h_{\mathrm{G}}$, we can rearrange the terms of \eqref{eq:sugra-c} to obtain
\begin{align}\label{eq:Grif_c-map}
\begin{split}
	g_{2k}
	&=\frac{r^2+2k}{(r^2-2k)^2}\,\dif r ^2
	+\frac{r^2}{r^2-2k}\, g_M
	+\frac{1}{r^2-2k}\Re h_{\mathrm{G}}(\nabla\Phi_{E},\nabla\Phi_{E})\\
	&\quad	+\frac{4r^2}{(r^2+2k)(r^2-2k)^2}\Big(\varphi_{k}
	-\frac{1}{2}\, \pi_{k}^*( \Im h_{\mathrm{G}}(\Phi_{E},\nabla \Phi_{E}))\Big)^2\\
	&\quad + \frac{2r^2}{(r^2-2k)^2}\Re{h_L}(\nabla\Phi_{E},\nabla\Phi_{E}).
\end{split}
\end{align}
We can do the same with the Weil hermitian form $h_{\mathrm{W}}=\ol h_L+h_L\otimes h_M$.
Since we again have $Q=\Im h_{\mathrm{W}}$, we get
\begin{align}
\begin{split}
	g_{2k}
	&=\frac{r^2+2k}{(r^2-2k)^2}\,\dif r ^2
	+\frac{r^2}{r^2-2k}\, g_M
	+\frac{1}{r^2-2k}\Re h_{\mathrm{W}}(\nabla\Phi_{E},\nabla\Phi_{E})\\
	&\quad	+\frac{4r^2}{(r^2+2k)(r^2-2k)^2}\Big(\varphi_{k}
	-\frac{1}{2}\, \pi_{k}^*( \Im h_{\mathrm{W}}(\Phi_{E},\nabla \Phi_{E}))\Big)^2\\
	&\quad + \frac{4k}{(r^2-2k)^2}\Re{h_L}(\nabla\Phi_{E},\nabla\Phi_{E}).
\end{split}
\end{align}
This last formulation of the c-map metric echoes the one presented in \cite[Equation (7) (p.\ 90)]{CortesCompProj}, once we set $\rho^{\mathrm{CDS}}=r^2-2k$.
\end{rmk}
\begin{eg}
	We revisit Example \ref{eg:CHn} and construct the supergravity c-map metric associated to $\Hy_\C^n$. We begin by defining a system of coordinates on $N_{2k}$.
	
	We already have global complex coordinates $X=(X_i)_{i=1}^n\colon M\to \C^n$, and a real coordinate $r>\sqrt{2k}$ on $\R_{>2k}$.
	We can get global coordinates $(w_i)_{i=0}^n$ for the fibres of $E$ by writing the tautological section $\Phi_{E}$ as
	\begin{equation}
		\Phi_{E}= \sum_{i=0}^n(w_is_i + \ol w_i \ol s_i),
	\end{equation}
	where $\{s_i,\ol s_i\}_{i=0}^n$ is the basis of the $\nabla$-parallel sections described in Example \ref{eg:CHn}.
	In particular, we have
	\begin{equation}
		\nabla \Phi_{E} =\sum_{i=0}^n(\dif w_i \otimes s_i + \dif \ol w_i \otimes \ol s_i).
	\end{equation}
	 Finally, let $t$ be a (local) coordinate function factoring through the $\Z_k$-quotient $q:S\rightarrow S_k$ such that $e^{-\I t}=z_0^k/|z_0|^k$.
	This gives on $S_k$ a global $1$-form
	\begin{equation}
		\dif t
		=-k\,\im\bigg(\frac{\dif z_0}{z_0}\bigg).
	\end{equation}
	Recall that on the base $M$, we have
	\begin{equation}
		\begin{split}
		g_M &=\frac{\norm{\dif X}^2}{1- \norm{X}^2} + \frac{\left|\sum_{i=1}^{n} \ol X_i\dif X_i\right|^2}{\left(1- \norm{X}^2 \right)^2},\\
		\omega_M &=\frac{1}{2\I}\Bigg(\frac{\sum_{j=1}^{n}\dif\ol X_j\wedge \dif X_j}{1- \norm{X}^2} + \frac{\sum_{i,j=1}^{n} X_i\dif\ol X_i\wedge \ol X_j\dif X_j }{\left(1- \norm{X}^2\right)^2}\Bigg)\\
		&=\frac{1}{2}\,\dif\,\im\Bigg(\sum_{i=1}^{n}\frac{\ol X_i\dif X_i}{1- \norm{X}^2}\Bigg).
		\end{split}
	\end{equation}
	On $S$, we have the principal connection $1$-form
	\begin{equation}
		\wt\varphi = \Im\left(\frac{\dif z_0}{z_0} - \sum_{i=1}^{n}\frac{\ol X_i\dif X_i}{1- \norm{X}^2}\right).
	\end{equation}
	Following the construction in Theorem \ref{th:sugra-c}, $\varphi=-\wt\varphi$ induces on $S_k$ the principal connection $\varphi_k$ (cf.\ \S \ref{sec:twist}).
	By \eqref{eq:fundamentalVF_principalConn_discrete_quotient}, we have
	\begin{equation}
		\begin{split}
			q^*\varphi_k
			&=-k\wt\varphi
			= \dif t + \frac{k}{1- \norm{X}^2}\,\im\Bigg(\sum_{i=1}^{n}\ol X_i\dif X_i\Bigg).
		\end{split}
	\end{equation}
	Next, we have the vertical forms
	\begin{equation}
		\begin{split}
		h_{\mathrm{G}}(\Phi_{E}, \nabla\Phi_{E})
		&=\sum_{i=1}^{n} \ol w_i\dif w_i-\ol w_0 \dif w_0,\\
		h_{\mathrm{G}}(\nabla\Phi_{E}, \nabla\Phi_{E})
		&=\sum_{i=1}^{n} \dif \ol w_i\otimes \dif w_i-\dif \ol w_0\otimes \dif w_0,\\
		h_L(\nabla\Phi_{E},\nabla\Phi_{E})
		&=\frac{\left(\dif\ol w_0-\sum_{i=1}^{n}X_i\dif\ol w_i\right)\otimes\left(\dif w_0-\sum_{j=1}^{n}\ol X_j\dif w_j\right)}{1- \norm{X}^2}.
		\end{split}
	\end{equation}
	Putting all of the above together according to \eqref{eq:Grif_c-map}, we obtain
	\begin{align}
	\begin{split}
	g_{2k}
	&=\frac{r^2+2k}{(r^2-2k)^2}\,\dif r ^2
	+\frac{r^2}{r^2-2k}\, \Bigg(\frac{\norm{\dif X}^2}{1- \norm{X}^2} + \frac{|\sum_{i=1}^{n} \ol X_i\dif X_i|^2}{\left(1- \norm{X}^2 \right)^2}\Bigg)\\
	&\quad+\frac{1}{r^2-2k}\left(\sum_{i=1}^{n} |\dif w_i |^2-|\dif w_0|^2 \right)
	+ \frac{2r^2}{(r^2-2k)^2}\frac{|\dif w_0-\sum_{j=1}^{n}\ol X_j\dif w_j|^2}{1- \norm{X}^2}\\
	&\quad	+\frac{4r^2}{(r^2+2k)(r^2-2k)^2}\Bigg(\dif t + \frac{k}{1- \norm{X}^2}\,\im\Bigg(\sum_{i=1}^{n}\ol X_i\dif X_i\Bigg)\\
	&\quad-\frac{1}{2}\,\Im \Bigg(\sum_{i=1}^{n} \ol w_i\dif w_i-\ol w_0 \dif w_0 \Bigg)\!\Bigg)^2.
	\end{split}
	\end{align}
	This is the deformed Ferrara--Sabharwal metric appearing in \cite[Corollary 15 (p.\ 97)]{CortesCompProj} upon taking
	\begin{align}
	\rho^{\mathrm{CDS}}=r^2-2k,
	\qquad w^{\mathrm{CDS}}_i=\frac{(-1)^{\delta_{i,0}}}{\sqrt{2}}\,\ol w_i,
	\qquad t^{\mathrm{CDS}}=4t.
	\end{align}		
	 This was originally derived by physicists in \cite{CFG1989, FerSab, RLSV} as a string-theoretic moduli space metric with perturbative quantum corrections.
	
	When $k=0$, this is a left invariant metric on (a $\Z$-quotient of) the non-compact Wolf space $\SU(n+1,2)/\mathrm{S}(\U(n+1)\times \U(2))$.
\end{eg}
\subsection{Heisenberg action}

While there is always a local basis of $\nabla$-parallel sections of $E$, a global basis may fail to exist because of the presence of monodromy.
We can however get around this issue by considering the universal cover of $M$ and pulling back $E$ along the covering map.
This gives us a VPHS whose Gauß--Manin connection has no monodromy.
We will show that in the absence of monodromy, the supergravity c-map admits the action of the Heisenberg group.

In particular, we would like to obtain the Killing vector fields on $N_{2k}$ that generate the Heisenberg action.
In order to do this, we begin by constructing vertical Killing vector fields on the \emph{rigid} c-map space.
The following proposition will show that the bundle map $\vt\colon E_{\pi}\to T(E_{\pi})$ maps pullbacks of parallel sections of $E$ to Killing vector fields of $E_{\pi}$.
\begin{prop}\label{prop:vertical_Killing}
	Let $s\in\Gamma(E)$ be such that $\nabla s=0$.
	Then, the vertical vector field $v_s\in\Gamma(T(E_{\pi}))$ corresponding to $s$ is Killing and $\omega_{\mathrm{H}}$-Hamiltonian with Hamiltonian function
	\begin{equation}
	f_s=Q(s,\Phi_{E}).
	\end{equation}
	Moreover, it preserves $Z$, $f_{\mathrm{H}}$, $I_j$, and $\widehat\omega_j=\widehat{g}(I_j\cdot,\cdot)$, for $j=1,2,3$ as defined in \eqref{eq:twist_data_c-map} and \eqref{eq:HK_structure_pi-1E}.
\end{prop}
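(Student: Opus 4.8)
The plan is to exploit the fact that $v_s = \vt(s_\pi)$ is, by the discussion of vertical vector fields in \S\ref{ssec:Connections}, the generator of the one-parameter group of fibrewise translations by $s$ on $E_\pi$; since $s$ is $\nabla$-parallel, this flow preserves the splitting $T(E_\pi) \cong \mathcal H \oplus \mathcal V$ induced by $\nabla$ and acts trivially on $\mathcal H$. First I would record the action of $v_s$ on the tautological section: $\nabla_{v_s}\Phi_E = s_\pi$ and $\left(p^{E_\pi}_{\wt M}\right)_* v_s = 0$. From \eqref{eq:HK_structure_pi-1E} this immediately gives $I_j v_s = \vt(\text{(linear image of }s))$ for each $j$, in particular $I_1 v_s = -\vt(I_{\mathrm G} s_\pi)$, $I_2 v_s = -\left(\phi^{-1}(s_\pi)\right)^{\nabla}$, which, evaluated against the known generators $Z, I_1Z, I_2 Z, I_3Z$ from the previous lemma, shows $\mathcal L_{v_s} Z = [v_s, Z] = 0$ (both are vertical or pulled-back-parallel, and their bracket vanishes because $Z = -(I\xi)^\nabla$ is a $\nabla$-horizontal lift while $v_s$ translates along parallel sections). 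Likewise $f_{\mathrm H} = -\tfrac12(r^2+2k)$ is the pullback of a function on $\wt M$, hence annihilated by the vertical field $v_s$.

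Next I would verify the Hamiltonian claim. Using the formula $\omega_{\mathrm H} = -\wt\omega - \tfrac12 \dif\big(Q(\Phi_E, \nabla\Phi_E)\big)$ from the Remark preceding the theorem, together with $\iota_{v_s}\wt\omega = 0$ (as $\wt\omega$ lives on the base directions and $v_s$ is vertical), Cartan's formula gives
\begin{equation}
\iota_{v_s}\omega_{\mathrm H} = -\tfrac12\,\iota_{v_s}\dif\big(Q(\Phi_E,\nabla\Phi_E)\big) = -\tfrac12\big(\mathcal L_{v_s} - \dif\iota_{v_s}\big)\big(Q(\Phi_E,\nabla\Phi_E)\big).
\end{equation}
The Lie-derivative term is $Q(\nabla_{v_s}\Phi_E, \nabla\Phi_E) + Q(\Phi_E, \nabla_{v_s}\nabla\Phi_E)$; since $\nabla$ is flat and $s$ parallel, $\nabla_{v_s}\nabla\Phi_E = \nabla(\nabla_{v_s}\Phi_E) = \nabla s_\pi = 0$, and $Q(s_\pi, \nabla\Phi_E) = -\dif\big(Q(s_\pi,\Phi_E)\big)$ using $\nabla$-parallelism of both $Q$ and $s$; meanwhile $\iota_{v_s}\big(Q(\Phi_E,\nabla\Phi_E)\big) = Q(\Phi_E, s_\pi) = -Q(s_\pi,\Phi_E)$ by skew-symmetry of $Q$ in weight $3$. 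Collecting terms yields $\iota_{v_s}\omega_{\mathrm H} = -\dif\big(Q(s_\pi,\Phi_E)\big) = -\dif f_s$, so $v_s$ is $\omega_{\mathrm H}$-Hamiltonian with Hamiltonian $f_s = Q(s,\Phi_E)$. A parallel computation with $\wt\omega$ replaced by the appropriate Kähler forms $\widehat\omega_j = \widehat g(I_j\cdot,\cdot)$ of the hyperkähler structure, using that $\widehat g$ is built from $\wt g$ and $\Re h_{\mathrm G}$ (both $\nabla$-parallel along $v_s$) and the explicit shape of $I_j$ from \eqref{eq:HK_structure_pi-1E}, shows $\mathcal L_{v_s}\widehat\omega_j = 0$; combined with $\mathcal L_{v_s}\widehat g = 0$ (the translation flow is a $\widehat g$-isometry since $\widehat g$ restricted to fibres is the constant form $\Re h_{\mathrm G}$ and is $\nabla$-pulled-back on $\mathcal H$) this gives $\mathcal L_{v_s} I_j = 0$ for $j = 1,2,3$.

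The step I expect to be the main obstacle is the Killing property $\mathcal L_{v_s}\widehat g = 0$ together with $\mathcal L_{v_s} I_j = 0$: one must be careful that the decomposition $T(E_\pi) = \mathcal H \oplus \mathcal V$ is genuinely preserved by the flow of $v_s$ — this is where flatness of $\nabla$ enters essentially, since the horizontal distribution is $\nabla$-defined and translating by a parallel section sends $\nabla$-horizontal curves to $\nabla$-horizontal curves — and that the metric components in this splitting ($\wt g$ pulled back, the fibre pairing $\Re h_{\mathrm G}$, and the vanishing cross terms) are each individually invariant. Once the invariance of the splitting is established, the invariance of every tensor on the list reduces to the invariance of its building blocks $\wt g$, $h_{\mathrm G}$, $Q$, $\phi$, and $\Phi_E$'s covariant derivative under $v_s$, all of which follow from $\nabla s = 0$ and the $\nabla$-parallelism of $\wt g$, $Q$, and $\wt\omega$. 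I would then remark that $Z$ being itself expressible via $\Phi_E$-independent data makes $[v_s, Z] = 0$ automatic, completing the proof.
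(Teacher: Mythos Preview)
Your proposal is correct and close in spirit to the paper's proof, but there is one genuinely different step worth highlighting.

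For the Hamiltonian claim, the paper simply computes $\dif f_s = \nabla(Q(s,\Phi_E)) = Q(s,\nabla\Phi_E) = Q(\nabla_{v_s}\Phi_E,\nabla\Phi_E) = -\iota_{v_s}\omega_{\mathrm H}$; your Cartan-formula version reaches the same identity after unwinding the flatness of $\nabla$, so the two arguments coincide up to presentation. Likewise, the invariance of $f_{\mathrm H}$, of $Z$ (the paper argues via $\Lie{v_s}\wt\varphi=0$, you via $[v_s,Z]=0$ for a horizontal lift), and of $\widehat\omega_1 = 2\wt\omega+\omega_{\mathrm H}$ are handled the same way in both.

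The substantive difference is in how $\Lie{v_s}\widehat g=0$ and $\Lie{v_s}I_2=0$ are obtained. You argue geometrically: the flow of $v_s$ is fibrewise translation by the $\nabla$-parallel section $s$, hence preserves the $\nabla$-splitting $\mathcal H\oplus\mathcal V$, the fibrewise form $\Re h_{\mathrm G}$, and the bundle maps $\phi$, $\phi^{-1}$, $\vt$, $(\cdot)^\nabla$; invariance of $\widehat g$ and of each $I_j$ follows at once. The paper instead computes $\Lie{v_s}\widehat\omega_2=\dif\iota_{v_s}\widehat\omega_2$ directly and reduces it to $4\,\Alt\big(\wt g(\Re\wt\eta_{\cdot}(\phi^{-1}(s)),\cdot)\big)$, which vanishes because the holomorphic difference tensor $\wt\eta$ is totally symmetric. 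Your flow argument is cleaner here and avoids invoking the ASK-specific symmetry of $\wt\eta$; the paper's computation, on the other hand, makes the role of the special K\"ahler structure visible. Either route is valid.
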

\begin{proof}
	First of all, notice that since $s$ is $\nabla$-parallel, $\nabla(\nabla_{v_s}\Phi_{E})=\nabla s=0$.
	We now compute the Lie derivative of $\widehat{g}$ along $v_{s}$, obtaining
	\begin{align}
	\begin{split}
	&\Lie{v_{s}}\widehat{g}(X,Y)
	=\Lie{v_{s}}\big(\widetilde{g}
	+\Re h_{\mathrm G}(\nabla\Phi_{E},\nabla\Phi_{E})\big)(X,Y)\\
	&\qquad=\Lie{v_{s}}\big(\Re h_{\mathrm G}(\nabla\Phi_{E},\nabla\Phi_{E})\big)(X,Y)\\
	&\qquad=v_{s}\left(\Re h_{\mathrm G} (\nabla_{X}\Phi_{E},\nabla_{Y}\Phi_{E})\right)\\
	&\qquad\quad-\Re h_{\mathrm G} (\nabla_{[v_{s},X]}\Phi_{E},\nabla_{Y}\Phi_{E})
	-\Re h_{\mathrm G} (\nabla_{X}\Phi_{E},\nabla_{[v_{s},Y]}\Phi_{E})\\
	&\qquad=\Re (\nabla_{v_{s}} h_{\mathrm G}) (\nabla_{X}\Phi_{E},\nabla_{Y}\Phi_{E})\\
	&\qquad\quad+\Re h_{\mathrm G} \left([\nabla_{v_{s}},\nabla_{X}]\Phi_{E}-\nabla_{[v_{s},X]}\Phi_{E},\nabla_{Y}\Phi_{E}\right)\\
	&\qquad\quad+\Re h_{\mathrm G}\left([\nabla_{v_{s}},\nabla_{Y}]\Phi_{E}-\nabla_{[v_{s},Y]}\Phi_{E},\nabla_{X}\Phi_{E}\right)\\
	&\qquad=\Re(\nabla_{v_{s}} h_{\mathrm G})(\nabla_{X}\Phi_{E},\nabla_{Y}\Phi_{E}).
	\end{split}
	\end{align}
	The last expression vanishes because $\nabla_{v_s} h_G$ is to be interpreted as $\nabla_{(\pi_{E})_*v_s}h_{G}$ by the definition of a pullback connection, and $v_s$ is a vertical tangent vector on the total space of the bundle $\pi_E\colon E\to M$.
	
	We can now check that $f_s$ is a Hamiltonian function of $v_{s}$ by computing
	\begin{align}
	\dif f_s
	&=\nabla (Q(s,\Phi_{E}))
	=Q(s,\nabla\Phi_{E})
	=Q(\nabla_{v_{s}}\Phi_{E},\nabla\Phi_{E})
	=-\iota_{v_{s}}\omega_{\mathrm{H}}.
	\end{align}
	
	The function $r$ is defined on the base, and therefore it is invariant along vertical vector fields.
	In particular $\Lie{v_{s}}f_{\mathrm{H}}=0$.
	
	The vector field $Z$ is also invariant, as $\widetilde{\varphi}=r^{-2}\widehat{g}(Z,\cdot)$ is
	\begin{align}
	\Lie{v_{s}}\widetilde{\varphi}
	&=\dif \iota_{v_{s}}\wt \varphi+\iota_{v_{s}}\dif \wt\varphi
	=0-2\iota_{v_{s}}\widetilde{\omega}
	=0.
	\end{align}

	The invariance of $I_k$ is equivalent to that of $\widehat\omega_k$.
	Note that $\widehat\omega_1=2\widetilde{\omega}+\omega_{\mathrm{H}}$ is a sum of invariant $2$-forms.
	Their invariance follows from the observation that $\widetilde{\omega}$ is horizontal, and $v_{s}$ is $\omega_{\mathrm{H}}$-Hamiltonian with $\omega_{\mathrm{H}}$ closed.
	
	Meanwhile, for $\widehat\omega_2$, we compute
	\begin{align}
	\begin{split}
	\Lie{v_{s}}\widehat\omega_2
	&=\dif\iota_{v_{s}}\widehat\omega_2
	=\dif\big(\widehat{g}(I_2v_{s},\cdot)\big)
	=-\dif\Big(\widehat{g}\big((\phi^{-1}(s))^{\nabla},\cdot\big)\Big)\\
	&=-\dif\Big(\widetilde{g}\big(\phi^{-1}(s),\cdot\big)\Big)
	=-2\Alt\Big(\nabla^{\widetilde{g}}\widetilde{g}\big(\phi^{-1}(s),\cdot\big)\Big)\\
	&=-2\Alt\Big(\widetilde{g}\big(\nabla^{\widetilde{g}}(\phi^{-1}(s)),\cdot\big)\Big)
	=4\Alt\Big(\widetilde{g}\big(\Re \wt\eta_{\cdot}(\phi^{-1}(s)),\cdot\big)\Big).
	\end{split}
	\end{align}
	This quantity vanishes because $\Re\wt\eta$ is symmetric.
	The invariance of $I_3=I_1 I_2$ follows.
\end{proof}

Killing vector fields on a hyperk\"ahler manifold satisfying the properties resulting from Proposition \ref{prop:vertical_Killing} can be used to obtain Killing vector fields on the corresponding quaternionic K\"aher manifold via the following proposition.
\begin{prop}[\cite{CST} Proposition 3.5 (p.\ 108)]\label{prop:Killing-twist}
	Let $X$ be a Killing vector field on $(E_{\pi},\widehat{g})$ which is $\omega_{\mathrm H}$-Hamiltonian with Hamiltonian function $f_X$, and which preserves the hyperk\"ahler structure $I_1$, $I_2$, $I_3$ and the twist data $Z$, $f_{\mathrm{H}}$.
	Then, the vector field
	\begin{equation}\label{eq:twisted_killing}
	X^{\mathrm{Q}}\colonequals \mathrm{tw}\left(X-\frac{f_X}{f_{\mathrm{H}}}Z\right)
	\end{equation}
	is Killing with respect to $g_{2k}$. 
\end{prop}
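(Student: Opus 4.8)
\emph{The plan.} Since $g_{2k}=\tw(g_{\mathrm H})$ with $g_{\mathrm H}$ the elementary deformation of $\widehat g$ (Proposition~\ref{prop:cmap_as_twist}), the strategy is to realise $X^{\mathrm Q}$ as the push-forward to $N_{2k}$ of a \emph{canonical lift} of $X$ to the total space $P$ of the twist bundle, and to show that this lift preserves the pullback of $g_{2k}$ to $P$. Write $W=E_{\pi}|_{>2k}$, $W'=N_{2k}$, let $p^P_W\colon P\to W$ and $p^P_{W'}\colon P\to W'$ be the two projections, let $\theta_P$ be the connection $1$-form with curvature $\omega_{\mathrm H}$, let $X_P$ be the fundamental vector field of the principal $\U(1)$-action on $P$, and recall $Z_P=Z^{\theta_P}+f_{\mathrm H}X_P$. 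First I would record two consequences of the hypotheses: because $(Z,\omega_{\mathrm H},f_{\mathrm H})$ is twist data we have $\iota_Z\omega_{\mathrm H}=-\dif f_{\mathrm H}$, so $Zf_X=\dif f_X(Z)=-(\iota_X\omega_{\mathrm H})(Z)=(\iota_Z\omega_{\mathrm H})(X)=-\dif f_{\mathrm H}(X)=-Xf_{\mathrm H}=0$; in particular $f_X$ is $Z$-invariant and $\tw(f_X)$ is defined. Moreover, from $[Z,X]=0$, the structure identity $[V^{\theta_P},W^{\theta_P}]=[V,W]^{\theta_P}-(p^P_W)^*(\omega_{\mathrm H}(V,W))\,X_P$, and $\omega_{\mathrm H}(Z,X)=-Xf_{\mathrm H}$, one gets $[Z_P,X^{\theta_P}]=0$.

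\emph{The canonical lift.} Set $\widehat X\colonequals X^{\theta_P}+(p^P_W)^*f_X\cdot X_P$. Using $[Z_P,X^{\theta_P}]=0$, $[Z_P,X_P]=0$ and $Z_P\big((p^P_W)^*f_X\big)=(p^P_W)^*(Zf_X)=0$ one sees $[Z_P,\widehat X]=0$, so $\widehat X$ descends to a vector field on $W'$; and by Cartan's formula together with $\dif\theta_P=(p^P_W)^*\omega_{\mathrm H}$, $\theta_P(X_P)=1$ and $\iota_{X_P}(p^P_W)^*\omega_{\mathrm H}=0$ one computes $\Lie{\widehat X}\theta_P=(p^P_W)^*(\iota_X\omega_{\mathrm H}+\dif f_X)=0$, i.e.\ $\widehat X$ preserves $\theta_P$. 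Next I would check that the descended field is exactly $X^{\mathrm Q}$: with $\widetilde X\colonequals X-\tfrac{f_X}{f_{\mathrm H}}Z$ and $Z^{\theta_P}=Z_P-f_{\mathrm H}X_P$ one finds $\widehat X=\widetilde X^{\theta_P}+(p^P_W)^*\big(\tfrac{f_X}{f_{\mathrm H}}\big)Z_P$; the second summand is a multiple of the $p^P_{W'}$-vertical field $Z_P$, so it dies under $(p^P_{W'})_*$, whence $(p^P_{W'})_*\widehat X=(p^P_{W'})_*\widetilde X^{\theta_P}=\tw(\widetilde X)=X^{\mathrm Q}$.

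\emph{$X$ is Killing for $g_{\mathrm H}$, and conclusion.} This is the one genuinely computational point. Since $X$ preserves $\widehat g$ and each $I_j$ and satisfies $\Lie{X}Z=0$, it preserves the distribution $\Hq Z=\spn{Z,I_1Z,I_2Z,I_3Z}$, hence the $\widehat g$-orthogonal decomposition defining $g_{\Hq Z}$ and $g_{\perp}$, so $\Lie{X}g_{\Hq Z}=\Lie{X}g_{\perp}=0$; since $X$ also preserves $f_{\mathrm H}$, hence $r$, every scalar coefficient in \eqref{eq:gH_Macia_Swann} is $X$-invariant, giving $\Lie{X}g_{\mathrm H}=0$, and as $\widehat X$ is $p^P_W$-related to $X$ this yields $\Lie{\widehat X}(p^P_W)^*g_{\mathrm H}=0$. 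Now $(p^P_{W'})^*g_{2k}$ is the unique symmetric $2$-tensor $T$ on $P$ with $\iota_{Z_P}T=0$ (automatic for a $p^P_{W'}$-pullback, since $\ker(p^P_{W'})_*=\spn{Z_P}$) and $T|_{\ker\theta_P}=(p^P_W)^*g_{\mathrm H}|_{\ker\theta_P}$ (the defining property of the twist, extended to covariant tensors); uniqueness follows from $TP=\ker\theta_P\oplus\spn{Z_P}$. The tensor $\Lie{\widehat X}(p^P_{W'})^*g_{2k}$ satisfies both conditions with right-hand side $0$: it is $\iota_{Z_P}$-annihilated because $[\widehat X,Z_P]=0$, and its restriction to $\ker\theta_P$ — a $\widehat X$-invariant subbundle since $\widehat X$ preserves $\theta_P$ — equals $\Lie{\widehat X}(p^P_W)^*g_{\mathrm H}|_{\ker\theta_P}=0$. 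Hence $\Lie{\widehat X}(p^P_{W'})^*g_{2k}=0$, and pushing down via $p^P_{W'}$-relatedness and injectivity of $(p^P_{W'})^*$ gives $\Lie{X^{\mathrm Q}}g_{2k}=0$.

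\emph{Main obstacle.} The subtle feature is that the twist correspondence does \emph{not} intertwine Lie derivatives of metrics naively: $\tw(X)$ alone is generally not Killing for $g_{2k}$, which is precisely why the correction $-\tfrac{f_X}{f_{\mathrm H}}Z$ must appear and why it must be this exact combination. The argument above sidesteps the obstruction by passing to $P$ and using that $\widehat X$ preserves \emph{both} $\theta_P$ and $(p^P_W)^*g_{\mathrm H}$, the former being where the Hamiltonian hypothesis $\iota_X\omega_{\mathrm H}=-\dif f_X$ is indispensable; the verification $\Lie{X}g_{\mathrm H}=0$ in the third step is routine but somewhat lengthy, of the same flavour as the computations in Proposition~\ref{prop:vertical_Killing}.
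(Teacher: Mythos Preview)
The paper does not prove this proposition; it is quoted from \cite{CST} and used as a black box. So there is no in-paper argument to compare against.

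Your proof is correct and follows the natural route one would take in the twist formalism. The key idea --- lifting $X$ to $\widehat X=X^{\theta_P}+(p^P_W)^*f_X\cdot X_P$ on $P$, showing $\widehat X$ preserves both $\theta_P$ and $(p^P_W)^*g_{\mathrm H}$, and then descending --- is exactly the mechanism underlying \cite[Proposition~3.5]{CST}. Your verification that $\Lie{\widehat X}\theta_P=0$ is where the Hamiltonian condition $\iota_X\omega_{\mathrm H}=-\dif f_X$ enters, and your decomposition $\widehat X=\widetilde X^{\theta_P}+(f_X/f_{\mathrm H})Z_P$ correctly identifies the $p^P_{W'}$-pushforward with $X^{\mathrm Q}$. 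The uniqueness argument characterising $(p^P_{W'})^*g_{2k}$ via $\iota_{Z_P}$-annihilation and agreement with $(p^P_W)^*g_{\mathrm H}$ on $\ker\theta_P$ is clean and avoids any coordinate computation. The step $\Lie{X}g_{\mathrm H}=0$ is handled efficiently: since $X$ preserves $\widehat g$, each $I_j$, $Z$, and $f_{\mathrm H}$, it preserves the splitting $g_{\Hq Z}\oplus g_\perp$ and all scalar coefficients in \eqref{eq:gH_Macia_Swann}. One small point you might make explicit is why $[X_P,X^{\theta_P}]=0$ (namely, the $\theta_P$-horizontal lift of a $Z$-invariant field is invariant under the principal action), but this is standard and implicit in the Swann twist setup.
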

Let $s$ thus be a section of $E$ such that $\nabla s=0$, and let $v_s$ be the corresponding vertical vector field on $E_{\pi}$.
We obtain a Killing vector field
\begin{equation}
w_s\colonequals \mathrm{tw}\left(v_s-\frac{f_s}{f_{\mathrm{H}}}Z\right).
\end{equation}
Notice that $-\wt\varphi(v_s)=0$, so by $Z$-invariance, $v_s$ is the $(-\wt\varphi)$-horizontal lift of some vector field on $\R_{>2k}\times M$.
Let $v'_s$ be the $\varphi_k$-horizontal lift of said vector field on $N_{2k}$.
The twist of $v_s$ is then $v'_s-\beta(v_s)Z_k$, and that of $Z$ is $-f_{\mathrm{H}}Z_k$, where $Z_k$ is the fundamental vector field of the $\U(1)$-bundle $N_{2k}\to E_{\mathrm{pr}_M}$.
Therefore,
\begin{align}
\begin{split}
w_s
&=v'_s-\beta(v_s)Z_k+f_{s}Z_k
=v'_s+\frac{1}{2}\, Q(\Phi_{E},\nabla_{v_s} \Phi_{E})Z_k+ Q(s, \Phi_{E})Z_k\\
&=v'_s+\frac{1}{2}\, Q(\Phi_{E},s)Z_k+ Q(s, \Phi_{E})Z_k
=v'_s+\frac{1}{2}\, Q(s,\Phi_{E})Z_k.
\end{split}
\end{align}

Now, in order to see the Heisenberg action, we make use of the following result relating commutators of Killing vector fields on the quaternionic Kähler manifold to those on the hyperkähler manifold.

\begin{prop}
Let $X_1,X_2$ be Killing vector fields on $(E_{\pi},\widehat{g})$ as in Proposition \ref{prop:Killing-twist} with Hamiltonian functions $f_{X_1}, f_{X_2}$.
Then the corresponding Killing vector fields $X_1^{\mathrm{Q}}$ and $X_2^{\mathrm{Q}}$ on $N_{2k}$ satisfy
	\begin{equation}
	[X_1^{\mathrm{Q}},X_2^{\mathrm{Q}}]= \tw([X_1,X_2]) + \tw(\omega_{\mathrm H}(X_1,X_2))Z_k.
	\end{equation}
\end{prop}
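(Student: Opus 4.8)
The plan is to deduce the formula from a single general fact about the twist correspondence — namely, the precise way in which $\tw$ fails to be a Lie algebra morphism — together with the standard Poisson-bracket identities for $\omega_{\mathrm{H}}$-Hamiltonian vector fields.

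\emph{First step: a commutator formula for $\tw$.} I would establish that for any two $Z$-invariant vector fields $Y,Y'$ on $E_{\pi}$,
\begin{equation*}
[\tw(Y),\tw(Y')]=\tw([Y,Y'])-\tw\!\big(\omega_{\mathrm{H}}(Y,Y')\big)\,Z_k .
\end{equation*}
On the principal $\U(1)$-bundle $p^{P}_{E_{\pi}}\colon P\to E_{\pi}$ of Lemma \ref{lemma:twist}, whose connection $\theta_P$ satisfies $\dif\theta_P=(p^{P}_{E_{\pi}})^{*}\omega_{\mathrm{H}}$, the $\theta_P$-horizontal lifts obey $\theta_P([Y^{\theta_P},(Y')^{\theta_P}])=-\dif\theta_P(Y^{\theta_P},(Y')^{\theta_P})=-\omega_{\mathrm{H}}(Y,Y')\circ p^{P}_{E_{\pi}}$, so that
\begin{equation*}
[Y^{\theta_P},(Y')^{\theta_P}]=[Y,Y']^{\theta_P}-\big(\omega_{\mathrm{H}}(Y,Y')\circ p^{P}_{E_{\pi}}\big)X_P .
\end{equation*}
Pushing this forward along $p^{P}_{N_{2k}}$, which carries $Y^{\theta_P}\mapsto\tw(Y)$, $X_P\mapsto Z_k$, and the $Z$-invariant function $\omega_{\mathrm{H}}(Y,Y')$ to $\tw(\omega_{\mathrm{H}}(Y,Y'))$, yields the asserted identity; here $[Y,Y']$ and $\omega_{\mathrm{H}}(Y,Y')$ are again $Z$-invariant because $Y,Y'$ are and $\Lie{Z}\omega_{\mathrm{H}}=0$.

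\emph{Second step: the bracket of the shifted fields.} I would apply the above with $Y_i\colonequals X_i-\tfrac{f_{X_i}}{f_{\mathrm{H}}}\,Z$, which are $Z$-invariant (this is built into Proposition \ref{prop:Killing-twist}, as $X_i^{\mathrm{Q}}=\tw(Y_i)$). The relevant elementary identities are: from $\iota_Z\omega_{\mathrm{H}}=-\dif f_{\mathrm{H}}$ and $\Lie{X_i}f_{\mathrm{H}}=0$ one gets $\omega_{\mathrm{H}}(X_i,Z)=X_i(f_{\mathrm{H}})=0$, hence $Z(f_{X_i})=-\omega_{\mathrm{H}}(X_i,Z)=0$; from $\iota_{X_i}\omega_{\mathrm{H}}=-\dif f_{X_i}$ one gets $X_1(f_{X_2})=\omega_{\mathrm{H}}(X_1,X_2)=-X_2(f_{X_1})$; and $Z(f_{\mathrm{H}})=0$ since $f_{\mathrm{H}}=-\tfrac12(r^2+2k)$ is a pullback from $M\times\R_{>2k}$. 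Together with $\Lie{X_i}Z=0$, a short Leibniz-rule computation then gives
\begin{equation*}
\omega_{\mathrm{H}}(Y_1,Y_2)=\omega_{\mathrm{H}}(X_1,X_2),\qquad
[Y_1,Y_2]=[X_1,X_2]-\frac{2\,\omega_{\mathrm{H}}(X_1,X_2)}{f_{\mathrm{H}}}\,Z .
\end{equation*}

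\emph{Third step: assembling.} Substituting into the first-step formula and using linearity of $\tw$, the rule $\tw(hY)=\tw(h)\tw(Y)$ for $Z$-invariant $h$, and $\tw(Z)=-f_{\mathrm{H}}Z_k$, $\tw(f_{\mathrm{H}})=f_{\mathrm{H}}$ from Lemma \ref{lemma:twist}, the term $-\tw\!\big(\tfrac{2\omega_{\mathrm{H}}(X_1,X_2)}{f_{\mathrm{H}}}Z\big)$ turns into $+2\,\tw(\omega_{\mathrm{H}}(X_1,X_2))\,Z_k$, and combining it with $-\tw(\omega_{\mathrm{H}}(Y_1,Y_2))Z_k=-\tw(\omega_{\mathrm{H}}(X_1,X_2))Z_k$ leaves precisely $\tw([X_1,X_2])+\tw(\omega_{\mathrm{H}}(X_1,X_2))Z_k$, as claimed. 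The one genuinely delicate point is the sign and normalisation bookkeeping in the first step: one must verify that, in the conventions of Lemma \ref{lemma:twist}, the curvature of $\theta_P$ is exactly $\omega_{\mathrm{H}}$ and that $Z_k=(p^{P}_{N_{2k}})_*X_P$ is the fundamental vector field of the descended principal $\U(1)$-action — once these are pinned down, the rest is routine, and it is exactly this interplay of the curvature correction with the $\tfrac{f_{X_i}}{f_{\mathrm{H}}}Z$-shifts that converts the naive minus sign into the plus sign in the statement.
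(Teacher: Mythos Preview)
Your proof is correct. The paper itself does not give a proof of this proposition but simply cites \cite[Theorem 3.8 (p.\ 109)]{CST} combined with Proposition~\ref{prop:Killing-twist}, so your argument in fact supplies a self-contained derivation in the specific setting of Lemma~\ref{lemma:twist}. The three steps you outline---the curvature correction $[\tw(Y),\tw(Y')]=\tw([Y,Y'])-\tw(\omega_{\mathrm H}(Y,Y'))Z_k$ coming from $\dif\theta_P=(p^P_{E_\pi})^*\omega_{\mathrm H}$ and $(p^P_{N_{2k}})_*X_P=Z_k$, the Poisson-bracket computation for the shifted fields $Y_i=X_i-\tfrac{f_{X_i}}{f_{\mathrm H}}Z$, and the recombination via $\tw(Z)=-f_{\mathrm H}Z_k$---are exactly the mechanism behind the cited result, and your sign and normalisation checks are on target.
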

\begin{proof}
See \cite[Theorem 3.8 (p.\ 109)]{CST} combined with Proposition \ref{prop:Killing-twist}.
\end{proof}

\begin{cor}\label{cor:Heisenberg}
	The vertical Killing vector fields $w_s$ associated to $\nabla$-parallel sections $s$ of $E$ and the Killing vector field $Z_k$ satisfy the Heisenberg algebra relations
	\begin{align}
		[w_s, Z_k]=0,
		&\hfill &
		[w_{s_1},w_{s_2}] = Q(s_1,s_2)Z_k.
	\end{align}
\end{cor}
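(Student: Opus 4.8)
The plan is to compute the two commutators directly, using the formula established in the preceding proposition together with the structure of the twist correspondence and the Hamiltonian data worked out for the vertical Killing fields $v_s$.

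First I would recall that, by Proposition~\ref{prop:vertical_Killing}, each $\nabla$-parallel section $s$ of $E$ gives a vertical Killing vector field $v_s$ on $E_\pi$ which is $\omega_{\mathrm H}$-Hamiltonian with Hamiltonian function $f_s = Q(s,\Phi_E)$, and which preserves $Z$, $f_{\mathrm H}$, and the hyperk\"ahler data. So $v_s$ and $v_{s'}$ both satisfy the hypotheses of Proposition~\ref{prop:Killing-twist}, and $w_s = v_s^{\mathrm Q}$, $w_{s'} = v_{s'}^{\mathrm Q}$. Then the commutator formula of the last proposition applies verbatim:
\begin{equation}
[w_{s_1}, w_{s_2}] = \tw([v_{s_1}, v_{s_2}]) + \tw\big(\omega_{\mathrm H}(v_{s_1}, v_{s_2})\big) Z_k.
\end{equation}
The key computation is thus reduced to evaluating the two terms on the right. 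For the first, $[v_{s_1}, v_{s_2}]$: both are vertical vector fields on the total space of the vector bundle $E_\pi \to \wt M_{>2k}$, corresponding under $\vt$ to the (constant along fibres) pullback sections $s_1$ and $s_2$; since these are pullbacks of fixed sections of $E$, the bracket of the associated fibrewise-constant vertical vector fields vanishes. Hence $\tw([v_{s_1},v_{s_2}]) = \tw(0) = 0$. For the second term, I would use the expression $\omega_{\mathrm H} = -\wt\omega - Q(\nabla\Phi_E, \nabla\Phi_E)$ from the remark after Proposition~\ref{prop:cmap_as_twist}. Since $v_{s_i}$ is vertical, $\wt\omega(v_{s_1}, v_{s_2}) = 0$ (as $\wt\omega$ is horizontal), and $\nabla_{v_{s_i}}\Phi_E = s_i$, so $\omega_{\mathrm H}(v_{s_1}, v_{s_2}) = -Q(s_1, s_2)$, a constant function; its twist is the same constant, giving $\tw(\omega_{\mathrm H}(v_{s_1},v_{s_2})) Z_k = -Q(s_1,s_2) Z_k$. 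A sign check against the convention $Q(v_1,v_2) = -Q(v_2,v_1)$ for weight $3$ then yields $[w_{s_1}, w_{s_2}] = Q(s_1, s_2) Z_k$ as claimed; I would be careful here, since the earlier explicit formula $w_s = v'_s + \tfrac12 Q(s,\Phi_E) Z_k$ provides an independent cross-check via a direct bracket computation in the coordinates $v'_s$ and $Z_k$.

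For the relation $[w_s, Z_k] = 0$, I would argue that $Z_k$ is the fundamental vector field of the principal $\U(1)$-bundle $N_{2k} \to E_{\mathrm{pr}_M}$, and that $w_s$ is $\U(1)$-invariant: indeed $w_s$ is the twist of a $Z$-invariant vector field, and twists of $Z$-invariant tensor fields are by construction $\U(1)$-invariant on $N_{2k}$. Invariance under the principal $\U(1)$-action is precisely $\Lie{Z_k} w_s = 0$, i.e. $[Z_k, w_s] = 0$. Alternatively, one can see it from $w_s = v'_s + \tfrac12 Q(s,\Phi_E) Z_k$: the horizontal lift $v'_s$ is $\varphi_k$-horizontal and lifts a vector field on the base $\R_{>2k}\times M$, hence commutes with $Z_k$ up to a vertical term controlled by the curvature, which vanishes upon pairing since $v'_s$ is itself a lift of a base field and $Q(s,\Phi_E)$ is $\U(1)$-invariant; and $[Z_k, Q(s,\Phi_E) Z_k] = (\Lie{Z_k} Q(s,\Phi_E)) Z_k = 0$.

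**The main obstacle** I anticipate is purely bookkeeping of signs and normalisations: reconciling the skew-symmetry convention on $Q$, the factor conventions in $\omega_{\mathrm H}$, and the $\tfrac12$ appearing in $w_s = v'_s + \tfrac12 Q(s,\Phi_E) Z_k$ so that the factor on the right-hand side of $[w_{s_1}, w_{s_2}] = Q(s_1,s_2) Z_k$ comes out to exactly $1$ rather than $\pm\tfrac12$ or $\pm 2$. Beyond that, everything follows formally from the results already established, so no genuinely new geometric input is needed.
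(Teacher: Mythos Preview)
Your approach is exactly the one the paper intends: the corollary is stated without proof precisely because it follows by plugging $X_i = v_{s_i}$ into the commutator formula of the preceding proposition, using $[v_{s_1},v_{s_2}]=0$ and $\omega_{\mathrm H}(v_{s_1},v_{s_2}) = -Q(s_1,s_2)$, while $[w_s,Z_k]=0$ comes from the built-in $\U(1)$-invariance of twisted fields. Your caveat about the sign is well placed: a literal application of the formula produces $-Q(s_1,s_2)Z_k$, and the cross-check via $w_s = v'_s + \tfrac12 Q(s,\Phi_E)Z_k$ gives the same, so the discrepancy with the stated sign is a genuine convention issue rather than an error in your reasoning.
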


\begin{rmk}
	The vertical Killing vector field $w_s$ associated to a $\nabla$-parallel section $s$ of $E$ can be readily integrated to yield an isometry $\psi_s$ of $N_{2k}$, given by 
	\begin{equation}
		\psi_s(p,x_p,u_p)= \left(p,x_p+s_p,u_p\exp\bigg(\frac{1}{2}\,Q_p(s_p,x_p)\bigg)\right).
	\end{equation} 
	Here, $p$ is a point on $M \times \R_{>2k}$, while $x_p$ and $u_p$ are points in the fibres over $p$ of the respective pullbacks of $E\rightarrow M$ and $S_k\rightarrow M$ to $M \times \R_{>2k}$.
	
	We can now explicitly see that this gives an action of the Heisenberg group:
	\begin{equation}
		\begin{split}
		\psi_{s_1}\circ \psi_{s_2}(p,x_p,u_p)
		%&=\psi_{s_1}\bigg(p,x_p+(s_2)_p,u_p\exp\bigg(\frac{1}{2}\,Q_p((s_2)_p,x_p)\bigg)\bigg)\\
		%&=\bigg(p,x_p+ (s_2)_p+(s_1)_p,u_p\exp\bigg(\frac{1}{2}\,Q_p((s_2)_p,x_p)+\frac{1}{2}\,Q_p((s_1)_p,x_p+(s_2)_p)\bigg)\bigg)\\
		&=\psi_{s_1+s_2}\bigg(p,x_p,u_p\exp\bigg(\frac{1}{2}\,Q(s_1,s_2)_p\bigg)\bigg).
		\end{split}
	\end{equation} 
\end{rmk}

\subsection{Functoriality}\label{ssec:functoriality}

The supergravity c-map is a natural construction. This means that isomorphisms of PSK manifolds lift to isometries of quaternionic Kähler manifolds in a natural way. In this subsection, we describe this lifting of isomorphisms explicitly.

Consider the PSK isomorphism $(\wt\psi,\psi)$ from $(\pi\colon\widetilde{M}\rightarrow M,\wt g,I,\wt\omega,\wt\nabla,\xi)$ to $(\pi'\colon\widetilde{M'}\rightarrow M',\wt g',I',\wt\omega',\wt\nabla',\xi')$.
In the following discussion, any construction associated to the PSK manifold $M$ will have a primed counterpart associated to the PSK manifold $M'$.
Using the structure of $L\colonequals \wt M\times_{\C^{\times}} \C$ and $L'$ as associated fibre bundles and the isomorphism $\wt\psi\colon \wt M \rightarrow \wt M'$, we can produce a complex vector bundle isomorphism $\psi^L\colonequals \wt\psi\times_{\C^{\times}} \id_{\C}\colon L \rightarrow L'$ covering $\psi$.
This can be regarded as an extension of $\wt\psi$.
Using $\psi^L$, we can then construct the maps $\psi^{E_{\C}}\colon E_{\C} \rightarrow E'_{\C}$ and $\psi_{\mathrm{pr}}^{E}\colon E\times \R_{>2k} \rightarrow E'\times \R_{>2k}$ via
\begin{equation}
\begin{split}
\psi^{E_{\C}} &= \psi^L + \psi^L \otimes (\psi^{1,0}_*)_{\psi^{-1}} + \ol\psi^L \otimes (\psi^{0,1}_*)_{\psi^{-1}} + \ol \psi^L, \\
\psi_{\mathrm{pr}}^{E} &= \left.\psi^{E_{\C}}\right|_{E}\times\id_{\R_{>2k}}.
\end{split}
\end{equation}
Since $\wt \psi$ is compatible with the Kähler structures on $\wt M$ and $\wt M'$, the map $\psi^L$ is compatible with the Hermitian forms $h_L$ and $h_{L'}$, and with the Chern connections $\chi$ and $\chi'$ (whose imaginary parts are $\wt\varphi$ and $\wt\varphi'$ respectively).
Since $\wt\psi_*$ maps $\zeta$ to $\zeta'$, the map $\psi^L$ sends the unit circle bundle $S \subset L$ to $S'\subset L'$.
So, we obtain a map $\psi^S_k\colon S_k\rightarrow S'_k$ pulling back the principal $\U(1)$-connection $\varphi'_k$ to $\varphi_k$ via the codifferential $(\psi^S_k)^*$.

By Theorem \ref{th:sugra-c}, the quaternionic K\"ahler manifold $N_{2k}$ obtained from $M$ via the supergravity c-map is the fibred product of the bundles $E\times \R_{>2k}\rightarrow M$ and $S_k \rightarrow M$.
An analogous statement holds for $N'_{2k}$.
Therefore, the maps $\psi_{\mathrm{pr}}^{E}$ and $\psi^S_k$ (both of which cover $\psi$) canonically induce a map $\psi^N\colon N_{2k}\rightarrow N'_{2k}$ covering $\psi$.

As the map $\wt \psi$ relates $\wt \omega$ and $\wt \nabla$ to $\wt \omega'$ and $\wt \nabla'$, the induced map $\psi_{\mathrm{pr}}^{E}$ relates the (pullbacks of the) bilinear form $Q$ and the connection $\nabla$ on the bundle $E\times \R_{>2k} \rightarrow M\times \R_{>2k}$, to $Q'$ and $\nabla'$ on $E'\times \R_{>2k} \rightarrow M'\times \R_{>2k}$.
Similarly, the K\"ahler structure of $M$ and the coordinate $r$ are related to the corresponding counterparts on $M'$.
 Therefore, the map $\psi^N$ is an isometry between the quaternionic K\"ahler manifolds $(N_{2k},g_{2k})$ and $(N'_{2k},g'_{2k})$.
As a result of naturality, the assignment $(\psi,\wt\psi)\mapsto \psi^N$ is a functor.
This provides us with the desired functorial lift of PSK isomorphisms.

In particular, we can consider $1$-parameter subgroups of PSK automorphisms and take the differential of the action at the identity to get vector fields that act as infinitesimal automorphisms.
Lifts of infinitesimal PSK automorphisms to infinitesimal isometries (i.e.\ Killing vector fields) of the supergravity c-map metric were constructed in a different way in \cite[Theorem 3.15 (p.\ 112)]{CST}. Unsurprisingly, the above functorial construction reproduces the same result.

\begin{prop}\label{prop:vorsicht-funktor}
	Let $\wt M \rightarrow M$ be a PSK manifold and let $\wt X$ and $X$ be vector fields on $\wt M$ and $M$ respectively, which generate a $1$-paramater family of PSK automorphisms $(\wt \psi_t,\psi_t)$ on $\wt M \rightarrow M$. Then, the complete lift $\wt X^T$ on $T\wt M \cong E_{\pi}$ is $\omega_{\mathrm H}$-Hamiltonian with Hamiltonian function
	\begin{equation}
	f^T_X \colonequals f_{\wt X^T} = -\Big(f_{\mathrm{H}}+k\Big)\widetilde{\varphi}(X)-\frac{1}{2}\, Q(\Phi_{E},\nabla_X \Phi_{E}).
	\end{equation}
	Moreover, the functorial lift $\psi_t^N$ of $(\wt \psi_t,\psi_t)$ is generated by the Killing vector field $(\wt X^T)^{\mathrm Q}$ on $N_{2k}$ arising from $\wt X^T$ and the choice of $\omega_{\mathrm H}$-Hamiltonian function $ f^T_X$, in accordance with \eqref{eq:twisted_killing}.
\end{prop}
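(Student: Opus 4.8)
The plan is to verify the two assertions separately, both by reducing to identities established earlier in the paper. For the first claim — that $\widetilde X^T$ is $\omega_{\mathrm H}$-Hamiltonian with the stated Hamiltonian function — I would appeal to Lemma \ref{lem:tw_symmetries} applied to the twist data $(Z,\omega_{\mathrm H},f_{\mathrm H})$ of Proposition \ref{prop:cmap_as_twist}, with $\varphi = -\widetilde\varphi$ and $\beta = -\tfrac12 Q(\Phi_E,\nabla\Phi_E)$ as identified in the proof of Theorem \ref{th:sugra-c}. By that lemma, $\widetilde X^T$ is $\omega_{\mathrm H}$-Hamiltonian with Hamiltonian function
\begin{equation}
f^T_X = \bigl((f_{\mathrm H}+k)\varphi + \beta\bigr)(\widetilde X^T)
= -(f_{\mathrm H}+k)\,\widetilde\varphi(\widetilde X^T) - \tfrac12\, Q\bigl(\Phi_E,\nabla_{\widetilde X^T}\Phi_E\bigr),
\end{equation}
provided the hypotheses $\Lie{\widetilde X^T} f_{\mathrm H} = 0$ and $\Lie{\widetilde X^T}\widetilde\varphi = \Lie{\widetilde X^T}\beta = 0$ hold. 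So the real content of this step is checking those three invariance conditions. Since $(\widetilde\psi_t,\psi_t)$ are PSK automorphisms, $\widetilde\psi_t$ is a Kähler isometry preserving $\xi$, hence preserves $r^2 = -\widetilde g(\xi,\xi)$ and $\widetilde\varphi$; the induced flow on $T\widetilde M\cong E_\pi$ is the complete lift, which covers the flow of $\widetilde X$, so invariance of $f_{\mathrm H}$ (a function of $r$) and of $\widetilde\varphi$ is immediate. For $\beta$, since $\widetilde\psi_t$ is compatible with $\widetilde\nabla$ and $\widetilde\omega$, the induced map on $E$ preserves $Q$ and $\nabla$, hence preserves the tautological section $\Phi_E$ (which commutes with induced bundle maps, as recalled in \S\ref{sec:prelim}), so it preserves $Q(\Phi_E,\nabla\Phi_E)$. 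Finally, one must observe $\widetilde\varphi(\widetilde X^T) = \widetilde\varphi(X)$ (the horizontal lift of $X$ plus a vertical correction, and $\widetilde\varphi$ is horizontal) and that $\nabla_{\widetilde X^T}\Phi_E$ agrees with $\nabla_X\Phi_E$ after projecting, so the displayed formula matches the claimed $f^T_X$.

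For the second claim I would use Proposition \ref{prop:Killing-twist}: once we know $\widetilde X^T$ is a Killing vector field of $\widehat g$ that is $\omega_{\mathrm H}$-Hamiltonian and preserves $Z$, $f_{\mathrm H}$, and the hyperkähler structures $I_1,I_2,I_3$, the twisted vector field $(\widetilde X^T)^{\mathrm Q} = \tw(\widetilde X^T - (f^T_X/f_{\mathrm H})Z)$ is automatically Killing for $g_{2k}$. That $\widetilde X^T$ is Killing for $\widehat g$ and preserves the hyperkähler triple follows because it is the complete lift of $\widetilde X$, which generates isometries $\widetilde\psi_t$ of $(\widetilde M,\widetilde g,I)$ preserving $\widetilde\nabla$; these data determine $\widehat g$ and $I_1,I_2,I_3$ via \eqref{eq:HK_structure_pi-1E} (using the identification $T\widetilde M\cong E_\pi$ and the intrinsic deviance $\eta$, all of which are natural), so the lifted flow preserves them. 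Invariance of $Z = -(I\xi)^\nabla$ follows from $\widetilde\psi_{t*}\xi = \xi$ and compatibility with $\widetilde\nabla$; invariance of $f_{\mathrm H}$ was already noted. Then it remains to identify the generator of the functorial lift $\psi^N_t$ with $(\widetilde X^T)^{\mathrm Q}$: by the construction of $\psi^N$ in \S\ref{ssec:functoriality}, $\psi^N_t$ covers $\psi_t$ and on the $E\times\R_{>2k}$ factor is the flow of $\widetilde X^T$ while on the $S_k$ factor it is the flow induced by $X$ through the principal connection $\varphi_k$; comparing with the description of $\tw$ in Lemma \ref{lemma:twist} and the formula \eqref{eq:twist_as_quotient} of Lemma \ref{lem:tw_symmetries}, the pushforward $q_*\widetilde X^T = (\tw(\widetilde X^T) + f^T_X Z_k)_q$ shows that the flow on $N_{2k}$ is exactly that of $\tw(\widetilde X^T) + f^T_X Z_k = \tw(\widetilde X^T - (f^T_X/f_{\mathrm H})Z)$, i.e.\ of $(\widetilde X^T)^{\mathrm Q}$.

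The main obstacle I anticipate is the bookkeeping in the second step: carefully matching the two a priori different descriptions of the lift — the functorial one built fibrewise from $\psi^E_{\mathrm{pr}}$ and $\psi^S_k$ in \S\ref{ssec:functoriality}, and the twist-theoretic one $\tw(\widetilde X^T - (f^T_X/f_{\mathrm H})Z)$ — and confirming that the Heisenberg-type $Z_k$-component produced by the twist (the $f^T_X Z_k$ term coming from \eqref{eq:twist_as_quotient}) is precisely the infinitesimal action on the $S_k$ fibre induced by $\psi^S_k$. This requires tracking how $\psi^S_k$ acts relative to the connection $\varphi_k$ and comparing with the formula $\tw(Z) = -f_{\mathrm H} Z_k$ and $\tw(X^\varphi) = X^{\varphi_k} - \beta(X)Z_k$ from Lemma \ref{lemma:twist}; everything else reduces to invariance checks that are forced by $(\widetilde\psi_t,\psi_t)$ being PSK automorphisms and by the naturality of all the constructions involved.
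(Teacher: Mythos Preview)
Your proposal is correct and follows essentially the same strategy as the paper: both invoke Lemma \ref{lem:tw_symmetries} with $\varphi=-\wt\varphi$ and $\beta=-\tfrac12 Q(\Phi_E,\nabla\Phi_E)$ to obtain the Hamiltonian function after checking that $\wt X^T$ preserves $f_{\mathrm H}$, $\wt\varphi$, and $\beta$ (the paper does this via the explicit identification $\psi^{E_\pi}\circ\phi=\phi\circ\wt\psi_*$, you via naturality), and both identify $(\wt X^T)^{\mathrm Q}$ with $q_*\wt X^T$ by combining \eqref{eq:twist_as_quotient} with $\tw(Z)=-f_{\mathrm H}Z_k$, then match this with the functorial lift $\psi^N_t$ by tracing through diagram \eqref{diag:bold-square}. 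The only minor difference is that you route through Proposition \ref{prop:Killing-twist} to establish that $(\wt X^T)^{\mathrm Q}$ is Killing, whereas the paper skips this since the functorial lift is already known to be an isometry from the discussion preceding the proposition; this detour is harmless but unnecessary.
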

\begin{proof}
	In order to prove the first part, our strategy will be to use Lemma \ref{lem:tw_symmetries}. We already know $\omega_{\mathrm H}$ is non-degenerate. We need to show that $\wt X^T$, interpreted as vector field tangent to $E_\pi$, preserves
	\begin{align}\label{eq:to-show}
		f_{\mathrm H} = -\frac{1}{2}(r^2 + 2k),\qquad
		\varphi=-\wt\varphi,\qquad
		\beta = -\frac{1}{2}\,Q(\Phi_{E}, \nabla \Phi_{E}).
	\end{align}
	For this purpose, we will describe the flow of $\wt X^T$ on $E_\pi$ and show that it preserves these objects.
	
	Consider first a PSK automorphism $(\wt\psi,\psi)$.
	By the universal property of the pullback we get a map $\psi^{(E_{\C})_{\pi}}\colon (E_{\C})_{\pi}\to E_{\pi}$, which in turn restricts to a map $\psi^{E_{\pi}}\colon E_{\pi}\to E_{\pi}$, as $\psi^{E_{\C}}$ preserves the real structure.
	Given $Y\in T\wt M_{>2k}$, we determine the element of $E_{\pi}$ corresponding to $\psi_* Y$ to be
	\begin{equation}\label{eq:naturality_psi_E}
		\begin{split}
		\phi(\wt\psi_*Y)
		&= \chi(\wt\psi_*Y)\Phi_L
		+ \Phi_L \otimes \pi_*\wt\psi_*Y^{1,0}
		+ \ol\Phi_L \otimes \pi_*\wt\psi_*Y^{0,1}
		+ \ol \chi(\psi_*Y)\ol \Phi_L\\
		&=\chi(Y)\Phi_L
		+ \Phi_L \otimes \psi_*\pi_*Y^{1,0}
		+ \ol\Phi_L \otimes \psi_*\pi_*Y^{0,1}
		+ \ol \chi(Y)\ol \Phi_L.
		\end{split}
	\end{equation}
	Here, we used the fact that $\chi$ and $\ol \chi$ are invariant under the action of the diffeomorphism $\wt \psi$, and that $\wt \psi$ is a map of the bundle $\pi\colon \wt M \rightarrow M$ covering $\psi$.
	We know that the tautological section $\Phi_L$ is preserved by the map (induced by) $\psi^L$.
	Therefore, $\phi(\wt\psi_*Y)$ is actually equal to $\psi^{(E_{\C})_{\pi}}(\phi(Y))$, implying the commutativity relation $\psi^{E_{\pi}}\circ \phi=\phi\circ\wt\psi_*$.
	The vector field $\wt X^T$, interpreted in $E_{\pi}$, can be written as
	\begin{equation}
	\phi_*(\wt X^T{}_{Y})
	=\left.\frac{\dif}{\dif t}\phi\left((\wt \psi_t)_*Y\right)\right\vert_{t=0}
	=\left.\frac{\dif}{\dif t}(\psi_t)^{E_\pi}\left(\phi(Y)\right)\right\vert_{t=0}.
	\end{equation}
	It follows that $(\psi_t)^{E_{\pi}}$ is the $1$-parameter family of automorphisms induced by $\wt X^T$ in $E_{\pi}$.
	Since $\wt\psi_t$ preserves $r^2=-\wt g(\xi,\xi)$, $\wt\varphi=\Im(\chi)$, $\wt\omega$, and $\wt\nabla$, the induced lift $\psi_t^{E_\pi}$ preserves the objects in \eqref{eq:to-show}, which are then $\wt X^T$-invariant.
	
	By combining \eqref{eq:twist_as_quotient} and \eqref{eq:twisted_killing}, we deduce that $(\wt X^T)^{\mathrm{Q}}$ is the pushforward of $\wt X^T$ via the quotient map $T\wt M_{>2k}\to N_{2k}$.
	Therefore, in order to prove the last part, it is enough to show that given some PSK automorphism $(\wt\psi,\psi)$, the maps induced by $\wt\psi_*\colon T\wt M_{>2k}\to T\wt M_{>2k}$ on $E\times\R_{>2k}$ and $S$ via the diagram \eqref{diag:bold-square}, are $\psi^{E}_{\mathrm{pr}}$ and $\psi^S$ respectively.
	 From the fact that $\wt\psi$ preserves $r^2$, we infer that it restricts to $\psi^S$ on $S$ and it descends to $\psi\times\id_{\R_{>2k}}$ on $M\times \R_{>2k}$.
	Notice that in the diagram \eqref{diag:cube1}, for every object, we have defined an associated automorphism, and that they all commute with the undashed arrows in the diagram.
	It follows that they also commute with the dashed one, implying that $\wt\psi_*$, descends to $\psi^{E}_{\mathrm{pr}}$ on $E\times\R_{>2k}$.
\end{proof}
\begin{rmk}\label{rmk:k_integer}
The integrality of $k$ is crucial in our functorial lifting of general PSK isomorphisms.
For instance, if $k=\frac{1}{2}$, by Remark \ref{rmk:rational_twist}, the twist of $E_{\pi}$ can be identified with a double cover of itself, and in general, automorphisms cannot be lifted to a double cover in a canonical way.
In \cite{CST}, the integrality of $k$ was not assumed, but this is nevertheless consistent with the above, as infinitesimal automorphisms can still be lifted for general $k$, in a canonical way.
\end{rmk}

\begin{rmk}\label{rmk:semidirectHeis}
	The action of the automorphism group $\mathrm{Aut}_{\mathrm{PSK}}(M)$ of the PSK manifold $\wt M \rightarrow M$ on the space of parallel sections $s$ of $E$ induces an action on the Heisenberg isometries $\psi_s$.
	This action is compatible with the conjugation by the functorial lift of PSK automorphisms.
	Therefore, the subgroup of isometries generated by the functorial lifts of PSK automorphisms and the Heisenberg isometries is a semidirect product 
	\begin{equation}
		\mathrm{Aut}_{\mathrm{PSK}}(M)\ltimes \mathrm{Heis}_{\rk(E)+1} \le \mathrm{Isom}(N_{2k}).
	\end{equation}
	This was shown for $\mathbf{H}_\C^n$ in a different manner in \cite[\S 3]{CortesRoserThung}.
	For the $n=0$ case, it was shown in \cite[Theorem 4.5 (p.\ 116)]{CST} that this semidirect product is in fact the full group of isometries.
	An advantage of our functorial approach is that it makes the action of the semidirect product on $N_{2k}$ transparent and explicit in general.
\end{rmk}
\section*{}
\bibliography{Bibliography}
\bibliographystyle{alpha}
\end{document}